\numberwithin{equation}{section}
\numberwithin{figure}{section}
\title{Nonlinear recombinations and generalized random transpositions}
\author{Pietro Caputo}
\address{Department of Mathematics and Physics, Roma Tre University, Largo San Murialdo 1, 00146 Roma, Italy.}
\email{pietro.caputo@uniroma3.it}
\author{Daniel Parisi}
\address{Department of Mathematics and Physics, Roma Tre University, Largo San Murialdo 1, 00146 Roma, Italy.}
\email{daniel.parisi@uniroma3.it}
\definecolor{darkblue}{rgb}{0.1,0.1,0.7}
\definecolor{darkred}{rgb}{0.7,0.1,0.1}
\newcommand{\tv}{{\rm TV}}	
\DeclareMathSymbol{\leqslant}{\mathalpha}{AMSa}{"36} 
\DeclareMathSymbol{\geqslant}{\mathalpha}{AMSa}{"3E} 
\DeclareMathSymbol{\eset}{\mathalpha}{AMSb}{"3F}     
\renewcommand{\leq}{\;\leqslant\;}                   
\renewcommand{\geq}{\;\geqslant\;}                   
\newcommand{\la}{\label} 
\newcommand{\be}{\begin{equation}}
\newcommand{\Om}{\Omega}
\def\1{\ifmmode {1\hskip -3pt \rm{I}} \else {\hbox {$1\hskip -3pt \rm{I}$}}\fi}
\newcommand{\ind}{\mathbf{1}}
\newcommand{\e}{\varepsilon}
\newcommand{\IND}{{\bf 1}}
\newcommand{\rhoN}{\rho^{\pi}}
\newcommand{\si}{\sigma} 
\newcommand{\ent}{{\rm Ent} } 
\newcommand{\tc}{\, |\, }
\newcommand{\scalar}[2]{\langle #1 , #2\rangle}
\newtheorem{theorem}{Theorem}[section]
\newtheorem{proposition}[theorem]{Proposition}
\newtheorem{lemma}[theorem]{Lemma}
\newtheorem{corollary}[theorem]{Corollary}
\newtheorem{remark}[theorem]{Remark}
\newtheorem{definition}[theorem]{Definition}
\newcommand{\cA}{\ensuremath{\mathcal A}} 
\newcommand{\cB}{\ensuremath{\mathcal B}}
\newcommand{\cE}{\ensuremath{\mathcal E}}
\newcommand{\cL}{\ensuremath{\mathcal L}}
\newcommand{\cP}{\ensuremath{\mathcal P}}
\newcommand{\cS}{\ensuremath{\mathcal S}} 
\newcommand{\cT}{\ensuremath{\mathcal T}} 
\newcommand{\cU}{\ensuremath{\mathcal U}} 
\newcommand{\cV}{\ensuremath{\mathcal V}}
\newcommand{\bbC}{{\ensuremath{\mathbb C}} } 
\newcommand{\bbE}{{\ensuremath{\mathbb E}} }
\newcommand{\bbN}{{\ensuremath{\mathbb N}} } 
\newcommand{\bbP}{{\ensuremath{\mathbb P}} } 
\newcommand{\bbR}{{\ensuremath{\mathbb R}} } 
\newcommand{\bbS}{{\ensuremath{\mathbb S}} }
\DeclareMathOperator{\var}{Var}
\let\a=\alpha \let\b=\beta   \let\d=\delta  \let\e=\varepsilon
 \let\g=\gamma       \let\l=\lambda
      \let\o=\omega      
\let\r=\rho   \let\t=\tau   
  \let\z=\zeta
   \let\G=\Gamma   
\let\O=\Omega
\def\({\left(}
\def\){\right)}
\begin{document}
\begin{abstract}
We study a nonlinear recombination model from population genetics 
as a combinatorial version of the Kac-Boltzmann equation from kinetic theory. Following Kac's approach, the nonlinear model is approximated by a  mean field linear evolution with a large number of particles.  In our setting, the latter takes the form of a generalized random transposition dynamics. 
Our main results establish a uniform in time propagation of chaos with quantitative bounds, and a tight entropy production estimate for the generalized random transpositions, which holds uniformly in the number of particles. 
As a byproduct of our analysis we obtain sharp estimates on the speed of convergence to stationarity for the nonlinear equation, both in terms of relative entropy and total variation norm.  
\end{abstract}

\keywords{Nonlinear recombinations, Entropy, Permutations, Log-Sobolev inequalities} 
\subjclass[2010]{82C40,82C20, 39B62}
\maketitle
\thispagestyle{empty}

\setcounter{page}{1}

\section{Introduction}\label{sec:1}
Recombinations are one of the principal components in the analysis of stochastic genetic algorithms \cite{Lyubich,Nagylaki}. 
Nonlinear recombinations provide a simple combinatorial setup for quadratic evolutions described by a Boltzmann-like equation \cite{Sinetal2}. A particle is represented by a finite string of characters from some finite alphabet and the binary collision mechanism is 
given by a recombination, that is the transposition of a random portion of the two colliding strings. The model belongs to the family of symmetric quadratic systems introduced in \cite{Sinetal}; see also \cite{entprod}
for the more general framework of reversible quadratic systems. 

Following the strategy introduced by Mark Kac in his seminal 1956 paper \cite{KAC}, one can approximate the nonlinear evolution of one particle by a linear mean field type Markov process involving  a large number of particles. Roughly speaking, if one has a good control of this approximation, together with a good control of the linear particle system, then the difficulties due to the nonlinearity in the original  process can be overcome.

In the context of Boltzmann's equation and its closely related  kinetic models this line of research has witnessed important progress in recent years \cite{entfound,kacsprog,anewapp,revchaos}, see also \cite{cattiaux,elementary,lacker} for related results for mean field diffusions of McKean-Vlasov type.  

The combinatorial setup considered in the present paper appears to be less explored; see however \cite{review_discrete,Reza,Bertini_etal} for the analysis of  Boltzmann-like equations with discrete velocities. One advantage of the combinatorial setting is that thanks to the discrete setup one can avoid a number of technical assumptions,  such as regularity and moments constraints, on the various distributions considered. Moreover, and perhaps more importantly, in contrast with the well studied case of the  Kac-Boltzmann equation \cite{entfound,kacsprog,einav2011villani,bonetto2017entropy}, in our setup it is possible to obtain tight entropy production estimates for the particle system which hold uniformly in the number of particles.  This provides  a class of  models for which the renowned Kac program can be completed in a  strong sense. 

In the setting of nonlinear recombinations the linear particle system takes the form of a generalized random transposition dynamics. This yields a natural generalization of the mean field exchange dynamics that are commonly studied in the probabilistic literature such as the Bernoulli-Laplace or the random transposition model \cite{DiacShash}. The purpose of this paper is twofold. 
On one hand we establish {\em uniform in time propagation of chaos}. On the other hand we prove tight estimates on the {\em entropy production} of the linear system which hold {\em uniformly in the number of particles}. 
As a corollary we obtain quantitative control on the convergence to stationarity for the nonlinear model in terms of relative entropy. In particular, this extends some results previously obtained in \cite{Sinetal2,entprod} by direct analysis of the entropy in the nonlinear recombination model. 
We now proceed with a detailed description of the model and of our main results.

\subsection{The nonlinear equation}\label{setup}
Let $\Omega = \prod_{i=1}^nX_i$ be the set of $n$-vectors $\sigma = (\sigma_1,\dots,\sigma_n)$ where $\si_i\in X_i$, and the $X_i$ are given finite sets. We interpret $\si$ as a particle. 
Thus, a particle is a string of $n$ characters each taken from a finite alphabet. 
A basic example is obtained by taking $\Omega=\{0,1\}^n$.  Without loss of generality we will assume that each $X_i$ has the form $X_i :=\{0,1,2,\dots, q_i\}$, for some $q_i \in \bbN$. 
Given a subset $A \subset [n],$ $[n]=\{1,\dots,n\}$, and $\sigma \in \Omega,$ $\sigma_A$ denotes the $A-$component of $\sigma,$ that is the string $(\sigma_i, i \in A).$ If $(\sigma, \eta) \in \Omega \times \Omega$ is a pair of particles, the recombination at $A$ consists in exchanging the $A$-component of $\sigma$ with the $A$-component of $\eta.$ This defines the map 
\begin{align}
(\sigma,\eta) \mapsto (\eta_A\sigma_{A^c},\sigma_A\eta_{A^c}),
\end{align}
where $\eta_A\sigma_{A^c}$ denotes the element of $\Omega$ with entries $\eta_i$ for $i \in A$ and $\sigma_i$ for $i \in A^c=[n]\setminus A.$ 
Let $\cP(\O)$ denote the set of probability measures on $\O$.
If the original pair $(\si,\eta)$ is obtained by sampling independently from $p\in\cP(\O)$, then the new particle $\eta_A\si_{A^c}$ is distributed according to $p_A\otimes p_{A^c}$, the product of the two marginals of $p$ on the $A$ and $A^c$ components respectively. By choosing the set $A\subset[n]$ according to some distribution $\nu$, one obtains the quadratic collision kernel
\begin{align}
p\mapsto Q(p) = \sum_{A \subset [n]}\nu(A)\,p_{A}\otimes p_{A^c}.
\end{align}
The nonlinear evolution is defined by the dynamical system $\dot{p}_t = Q(p_t) - p_t$, that is 
\begin{align}\label{boltzeq}
\frac{d}{dt}\,p_t = \sum_{A \subset [n]}\nu(A)\left(p_{t,A}\otimes p_{t,A^c} - p_t\right)
\end{align}
with the initial condition $p_0=p\in\cP(\O)$. Here $p_t\in\cP(\O)$ is the distribution  of the particle at time $t$, $p_{t,A}$ denotes its marginal on $A$ and $\nu$ is a given probability measure over the subsets of $[n]$.
The study of this model starts with the pioneering work of Geiringer \cite{Gei}; 
see also \cite{Sinetal,Sinetal2,Baakeetal,Martinez} for more recent accounts. It is well known that the Cauchy problem associated to \eqref{boltzeq} has a unique solution for every initial distribution $p\in\cP(\O)$; see e.g.\ \cite{Baakeetal}.  Moreover, it is not difficult to see that the evolution preserves the single site marginals, that is $p_{t,i}=p_{i}$ for all $t\geq 0$ and for all $i\in[n]$.  We say that the recombination measure $\nu$ is {\em separating} if for any $i, j \in [n]$ there is a positive probability that the random set $A$ with distribution $\nu$ separates $i$ and $j$. Equivalently, if $r(\nu)<1$ where
\begin{equation}\label{def:rnu}
r(\nu):= \max_{i<j \in [n]}\,\nu\left(\{i,j\}\subset A \text{ or }\{i,j\}\subset A^c\right)
\end{equation} 
denotes the maximum over $i<j$ of the probability that $A$ does not separate $i,j$.
It is a classical fact that, under the assumption that $\nu$ is separating, the system converges to the stationary state given by the product of the marginals of 
the initial state $p$; namely, if $\pi_i=p_{i}$ denotes the marginal of $p$ at site $i$, then 
\begin{equation}\label{station}
\pi=\otimes_{i=1}^n\,\pi_i
\end{equation}
is the equilibrium distribution and one has the convergence $p_t\to \pi$, $t\to\infty$, 
which  can be interpreted as the effect of repeated fragmentations of the initial state; see e.g.\ \cite{Sinetal2}. Some of our results will hold for arbitrary separating $\nu$. 
In some other cases we consider a slightly stronger assumption on $\nu$. 
Two examples to which all our results apply  are the following distributions $\nu$, which are commonly considered in the genetic recombination literature. We refer to \cite{Sinetal2,entprod} for more examples. \bigskip
\begin{enumerate}[1)]
\item 
{\em Uniform crossover}: $\nu(A)=\frac1{2^n}$, for all $A\subset[n]$;
\item 
{\em One-point crossover}: 
$\nu(A)=\tfrac1{n+1}\sum_{i=0}^n\ind_{A=J_i}
$, where $J_0=\emptyset$, $J_i=\{1,\dots,i\}$, $i\geq 1$.
\end{enumerate}

The quantitavive analysis of the convergence to equilibrium $p_t\to \pi$, $t\to\infty$ has been initiated in \cite{Sinetal,Sinetal2}, where a ``mixing time" bound was obtained for the discrete time version of the model. The decay to equilibrium in relative entropy for the continuous time model was studied in \cite{entprod}. These results were obtained by direct analysis of the nonlinear problem. In this paper we shall follow an entirely different approach, inspired by Kac's program from kinetic theory. As a byproduct of our analysis, we shall obtain an alternative proof of the  known results mentioned above. 

\subsection{The particle system}
Suppose there are $N$ ``particles'', described by variables $\eta(j)\in\O$, $j=1,\dots,N$. That is, each particle is a single string from $\O$ and $\eta_i(j)$ denotes the content of the $j$-th particle at site $i\in[n]$. We may picture $\eta\in\O^N$ as a $N\times n$ matrix such that each row is a particle with $n$ entries, and for each $i\in[n]$, the $i$-th column $\eta_i$ represents the content of site $i$ for different particles. 

Notice that $N$ and $n$ play two very different roles here. The number $N$ of particles will eventually be taken to $+\infty$ to recover the non linear mean field limit, in accordance with the general Kac program. The number $n$ should be thought as a fixed, possibly large quantity describing the size of a single particle space.
 
The Markov process is given by the following random pair-exchange process. 
Pairs of particles $\{j,l\}$, $1\leq j<l\leq N$ are chosen independently according to a Poisson clock process with rate $1/N$. When the pair $\{j,l\}$ ``rings", 
then 
a set $A\subset[n]$ is chosen with probability $\nu(A)$ and the recombination 
\begin{align}\label{eqtran}
(\eta(l),\eta(j)) \mapsto (\eta_A(j)\eta_{A^c}(l),\eta_A(l)\eta_{A^c}(j))
\end{align} 
is performed, that is the $A$-content  is exchanged between particle $j$ and particle $l$. For all $j,l \in [N], A \subset [n],$ for all $\eta \in \Omega^N$, we write $\eta^{j,l,A}$ for the new configuration $\eta' \in \Omega$ defined by 
\begin{align}\label{def:etajl}
\eta'(k)=\eta(k), \;\forall k \neq j,l;\qquad  \eta'(l)=\eta_{A}(j)\eta_{A^c}(l),\qquad \eta'(j)=\eta_A(l)\eta_{A^c}(j).
\end{align}
With this notation the pair configuration in the right hand side of \eqref{eqtran} is $(\eta^{j,l,A}(l),\eta^{j,l,A}(j))$. 
Define also $f^{j,l,A}(\eta)=f(\eta^{j,l,A})$ 
for all $f:\Omega^N \rightarrow \bbR$.  Then, the infinitesimal generator of the continuous time Markov process is given by 
\begin{align}\label{operator}
\cL_N f = \frac{1}{N}\sum_{1\leq j<l\leq N}\sum_{A \subset [n]}\nu(A)\left(f^{j,l,A}-f\right), \qquad\; f:\Omega^N \rightarrow \bbR.
\end{align}
Any product measure $\mu_N$ on $\Omega^N$ of the form $\mu_N = \mu^{\otimes N},$ where $\mu$ is itself a product measure $\mu=\mu_1\otimes\cdots\otimes\mu_n$ on $\Omega=\prod_{i=1}^nX_i,$ defines a reversible measure for the generator $\cL_N$.  Indeed, for such a measure one has the symmetry 
$\mu_N(\eta^{j,l,A})=\mu_N(\eta)$ for all $\eta\in\O^N$ and therefore $\cL_N$ is self-adjoint in $L^2(\mu_N)$.  
The process is not irreducible in the state space $\Omega^N$ since the content at a site for one particle is always exchanged with the content at the same site for another particle, and thus the number of particles with a given element $x \in X_i$ at a given site $i \in [n]$ is constant in time. To obtain an irreducible process one must fix the densities $\rho_{i,x},$ $i \in [n],x\in X_i$ defined by
\begin{align}
\rho_{i,x} = \frac{1}{N}\sum_{j=1}^N\ind(\eta_i(j) = x).
\end{align}
We call $\rho = (\rho_{i,x})$ the corresponding vector. 
Given a density vector $\rho$, consider the space
\begin{align}\label{omegarhosets}
\Omega_{\rho} := \Big\{(\eta(1),\dots,\eta(N)) \in \Om^N \!\!:\, \;
\textstyle{\rho_{i,x} = \frac{1}{N}\sum_{j=1}^N \ind(\eta_i(j) = x)},\;\, \forall i \in [n], \  x \in X_i\Big\}.
\end{align}
The set $\Om_{\rho}$ is well defined and non-empty for every vector $\r=\r_N$ such that $\r_{i,x}\in[0,1]$, $\sum_{x\in X_i}\r_{i,x}=1$ for all $i\in[n]$, and such that $N\r_{i,x}$ is an integer for all $i,x$. When  this holds 
we say that $\r_N$ is an {\em admissible sequence}.
Under suitable assumptions on the recombination measure $\nu$, see Definition \ref{def:strict}, for any given admissible $\r_N$, the Markov process with state space $\O_{\r_N}$ and generator $\cL_N$ is 
irreducible and converges to the uniform distribution on $\O_{\r_N}$. We will be interested in quantitative statements about this convergence. 

We often use the following procedure to construct admissible sequences. Fix a given $\pi=(\pi_{i,x})$ satisfying
$\pi_{i,x}\in[0,1]$ for all $i,x$ and $\sum_{x\in X_i}\pi_{i,x}=1$ for all $i\in[n]$. Then we call $\rhoN=\rhoN(\pi)$ the density 
defined by 
\begin{align}\label{rhoseq}
\rhoN_{i,x} :=\tfrac1N\,{\left\lfloor N \pi_{i,x}\right\rfloor}\,, \qquad i\in[n], \;x\in \{1,\dots,q_i\},
\end{align}
 and we set $ \rhoN_{i,0}=1-\sum_{x=1}^{q_i}\rhoN_{i,x}$. 
We remark that $\rhoN=\rhoN_N$ is an admissible sequence satisfying $$\rhoN_{i,x}=\pi_{i,x} +O\left(\tfrac1N\right)$$ for all $i,x$.  
Fixing the probability vector $\pi=(\pi_{i,x})$ is equivalent to fixing the stationary measure \eqref{station} of the nonlinear evolution, 
and thus we use the same symbol for them. 
From now on it is assumed that the densities $\pi_i$, and thus the corresponding product measure $\pi$ as in \eqref{station}, are fixed.  
Without loss of generality we restrict to the case where $\pi_{i,x} \in(0,1)$ for all $i \in [n],x\in X_i$ since for each $i$ we can otherwise discard those letters $x\in X_i$ such that $\pi_{i,x}$ is zero, and thus consider a new configuration space such that $\pi$ is everywhere positive.

\subsection{Chaos and the propagation of chaos}
The chaos property is commonly defined as follows. 
A measure $\mu_N \in \cP\left(\O^{N}\right)$ is {\em symmetric} if it is invariant under any permutation of the $N$  particles. We write $ P_k\mu_N\in\cP(\O^k)$ for the corresponding $k$-particle marginal.  \begin{definition}[Kac's chaos or ``Boltzmann property'']\label{kacschaos}
A sequence $\mu_N \in \cP\left(\O^{N}\right)$ of symmetric probabilities on $\Om^{N}$ is $\mu-$chaotic, for a given $\mu \in \cP(\O)$, if for any $k \in \bbN$ one has the weak convergence 
$$ P_k\mu_N\longrightarrow \mu^{\otimes k}\,,\qquad N\to\infty.$$
\end{definition}
A key step in implementing Kac's program is to construct a correspondence between probability measures on $\O$ and probability measures on $\O^N$. In our setting this can be formulated as follows. 

\begin{definition}[Canonical tensor product]\label{def:canonical}
Given a probability measure $p \in \cP(\Om)$ and an admissible sequence of density vectors $\r_N$, we let $$\gamma(p,\r_N):=p^{\otimes N}\left(\,\cdot\,|\,\Om_{\r_N}\right)$$ be the tensor product of $p$ conditioned on $\Om_{\r_N}$. When $\r_N$ is given by $\rhoN$ as in \eqref{rhoseq}, where the $\pi_{i,x}=p(\si_i=x)$ are the marginals of $p$, we use the notation $\g_N(p):= \gamma(p,\rhoN)$, and call it the {\em canonical tensor product}.  
\end{definition}
To avoid degeneracies we sometimes assume the following property. 
\begin{definition}[Irreducibility]\label{def:irred}
A probability measure $p \in \cP(\Om)$ is called {\em irreducible} if for any $i\in[n]$, any $x\in\{1,\dots,q_i\}$, there exists $\chi\in\O$ such that $p(\si_i=x,\si_{j}=\chi_{j}\;\forall j\neq i)>0$ and $p(\si_i=0,\si_{j}=\chi_{j}\;\forall j\neq i)>0$. 
\end{definition}
If $p$ is  irreducible and the sequence $\r_N$ is sufficiently close to the marginals of $p$, then the local central limit theorem guarantees that the $k$-particle marginals of the symmetric measures $\gamma(p,\r_N)$ converge to the product $p^{\otimes k}$ as $N\to\infty$, that is $\gamma(p,\r_N)$ is $p$-chaotic, see Theorem \ref{th:eqens} for a precise statement. In particular, for the canonical tensor product $\g_N(p)$, we will see that 
\begin{align}\label{eq:chaos0}
\|P_k\g_N(p)-p^{\otimes k}\|_\tv \leq \frac{C_0\,k}{N},
\end{align}
for some constant  $C_0=C_0(p)$, where $\|\cdot \|_\tv$ denotes the total variation distance.
Clearly, our reference product measure $\pi\in\cP(\O)$ is irreducible. In fact, $\gamma(\pi,\r_N)$ is  the {\em uniform probability measure on} $\Om_{\r_N}$, for any admissible sequence $\r_N$. In particular, it follows that its $k$-particle marginals converge to $\pi^{\otimes k}$ as $N\to\infty$. One can also show that $\gamma(p,\r_N)$ is {\em entropically $p$-chaotic} in the sense defined in \cite{entfound}, namely that on top of the convergence of marginals one also has
 \begin{align}\label{entropicchaos}
\lim_{N\to\infty} \frac1N\,H_N(\gamma(p,\r_N)\tc \gamma(\pi,\r_N))=H(p\tc\pi),
\end{align}
where $H(\cdot\tc\cdot)$, $H_N(\cdot\tc\cdot)$ denote respectively the relative entropy 
for probability measures on $\O$ and on $\O^N$, see Proposition \ref{th:entchaos} below. Let us recall the following standard definition.

\begin{definition}[Propagation of chaos]\label{kacschaosprop}
Let $\mu_{N,t}=\mu_{N}e^{t\cL_N}$, $t\geq 0$, denote the evolution of an initial symmetric distribution $\mu_{N}\in\cP(\O^N)$ under the Markov process generated by $\cL_N$. Suppose that $\mu_{N}$ is $p$-chaotic for some $p\in\cP(\O)$ and let $p_t$ denote the evolution of the initial datum $p$ under the nonlinear process \eqref{boltzeq}. If $\mu_{N,t}$ is $p_t$-chaotic for all fixed $t\geq 0$, then we say that {\em propagation of chaos} holds. If the weak convergence $P_k\mu_{N,t}\longrightarrow p_t^{\otimes k}$, $N\to\infty$, holds uniformly in $t\geq 0$ we say that propagation of chaos holds {\em uniformly in time}.
\end{definition}
An 
adaptation of well known arguments, see e.g.\ \cite{KAC,sznitman}, shows that the propagation of chaos (at fixed times) holds in our setting. In fact, the proof of this does not require the assumption that $\nu$ is separating.

\subsection{Uniform in time propagation of chaos}
Our first main result  concerns the validity of propagation of chaos uniformly in time, with quantitative bounds on the convergence as $N\to\infty$. 
\begin{theorem}\label{th:unifintime}
Assume that $\nu$ is separating. The propagation of chaos holds uniformly in time, that is for any $p\in\cP(\O)$, if $\mu_{N}$ is $p-$chaotic, then for all fixed $k \in \bbN$, as $N\to\infty$, $$P_k\mu_{N,t}\longrightarrow p_t^{\otimes k},\qquad \; \text{uniformly in $t\geq 0$},$$ where $p_t$ is the solution to the nonlinear equation \eqref{boltzeq} with initial datum $p_0=p$. 
 Moreover, if $\mu_{N}$ is the canonical tensor product $\mu_{N}=\gamma_N(p)$, and $p\in\cP(\O)$ is irreducible, then 
 \begin{align}\label{eq:chaos1}
\|P_k\mu_{N,t}-p_t^{\otimes k}\|_\tv \leq \frac{C}{\sqrt{N}},
\end{align}
for some constant $C=C(k,p)>0$ independent of $t,N$. 
\end{theorem}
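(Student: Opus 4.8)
I would prove both assertions by a two-scale scheme: on a bounded time interval invoke the (already available) fixed-time propagation of chaos, and for larger times exploit that the particle system and the nonlinear flow both relax to the common equilibrium $\pi^{\otimes k}$. For the qualitative statement, fix $T_0$; on $[0,T_0]$ the fixed-time result gives $\sup_{t\le T_0}\|P_k\mu_{N,t}-p_t^{\otimes k}\|_\tv\to0$, while for $t\ge T_0$ one writes $\|P_k\mu_{N,t}-p_t^{\otimes k}\|_\tv\le\|P_k\mu_{N,t}-\pi^{\otimes k}\|_\tv+\|p_t^{\otimes k}-\pi^{\otimes k}\|_\tv$. The second term is $\le k\sqrt{\tfrac12 e^{-\kappa' t}H(p\,|\,\pi)}$ by Pinsker and the exponential relaxation of \eqref{boltzeq} (valid since $\nu$ is separating; see \cite{entprod}, as well as the corollary obtained below from the entropy-production analysis). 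For the first term I would route through the stationary measure $\gamma_N(\pi)$, the uniform law on $\Omega_{\rho_N}$: combining the subadditivity estimate $H_k(P_k\mu_{N,t}\,|\,P_k\gamma_N(\pi))\le\tfrac{Ck}{N}H_N(\mu_{N,t}\,|\,\gamma_N(\pi))$, the $N$-uniform entropy decay $H_N(\mu_{N,t}\,|\,\gamma_N(\pi))\le e^{-\kappa t}H_N(\gamma_N(p)\,|\,\gamma_N(\pi))$ (from the entropy-production estimate of this paper), the entropic chaos bound $H_N(\gamma_N(p)\,|\,\gamma_N(\pi))=NH(p\,|\,\pi)+o(N)$ (Proposition~\ref{th:entchaos}), Pinsker, and the equivalence of ensembles $\|P_k\gamma_N(\pi)-\pi^{\otimes k}\|_\tv=O(k/N)$ (Theorem~\ref{th:eqens}), one obtains $\|P_k\mu_{N,t}-\pi^{\otimes k}\|_\tv\le C\sqrt{k\,H(p\,|\,\pi)}\,e^{-\kappa t/2}+o_N(1)$, uniformly in $t$. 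Letting $N\to\infty$ and then $T_0\to\infty$ gives $\sup_{t\ge0}\|P_k\mu_{N,t}-p_t^{\otimes k}\|_\tv\to0$.

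\textbf{The quantitative bound.} For \eqref{eq:chaos1} I would run the same scheme with $T_0$ replaced by $T_N:=\log N/\min(\kappa,\kappa')$. On $[T_N,\infty)$: with the quantitative entropic chaos bound $H_N(\gamma_N(p)\,|\,\gamma_N(\pi))=NH(p\,|\,\pi)+O(1)$, the computation above gives $H_k(P_k\mu_{N,t}\,|\,P_k\gamma_N(\pi))\le CkH(p\,|\,\pi)e^{-\kappa t}+O(k/N)=O(k/N)$, whence $\|P_k\mu_{N,t}-\pi^{\otimes k}\|_\tv=O(\sqrt{k/N})$; together with $\|p_t^{\otimes k}-\pi^{\otimes k}\|_\tv=O(k/\sqrt N)$ for $t\ge T_N$ this is the asserted bound on $[T_N,\infty)$. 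On $[0,T_N]$ one needs a quantitative fixed-time propagation of chaos: as in \cite{sznitman}, couple $(\eta_t)$ with $N$ independent copies $(\bar\eta_t(j))_j$ of the nonlinear McKean process associated with \eqref{boltzeq}, choosing the initial coupling of $\gamma_N(p)$ with $p^{\otimes N}$ to disagree on only $O(\sqrt N)$ particles (a local CLT / equivalence-of-ensembles input, which is the source of the $N^{-1/2}$ rather than $N^{-1}$ rate), and estimate the number of discrepant particles $D_t=\#\{j:\eta_t(j)\ne\bar\eta_t(j)\}$ by a generator comparison, aiming at $\|P_k\mu_{N,t}-p_t^{\otimes k}\|_\tv\le C(k,p)\tfrac kN\E[D_t]+O(k/N)$.

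\textbf{The main obstacle.} The crux is the window $[T_0,T_N]$ with $T_0$ bounded and $T_N$ of order $\log N$: there the long-time estimate is not yet of size $N^{-1/2}$, while a bound $\E[D_t]\le C e^{\lambda t}\sqrt N$ coming from a naive generator comparison would be of size $N^{1/2+\lambda/\min(\kappa,\kappa')}$ at $t=T_N$, hence useless. One must therefore show that discrepancies do not proliferate over a logarithmic time span. I would exploit that $\nu$ is separating — so that iterated recombinations resynchronize the coordinates of a discrepant pair of particles at a rate bounded below uniformly in $N$ — to promote the coupling to a contractive one, $\tfrac{d}{dt}\E[D_t]\le-c\,\E[D_t]+c_2$ with $c,c_2$ independent of $N$, so that $\E[D_t]=O(\sqrt N)$ for all $t$ and \eqref{eq:chaos1} holds on all of $[0,\infty)$ in one stroke. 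The technical heart is reconciling this contractivity with the requirement that the $k$ coupled copies retain marginal law $p_t$ up to correlations of order $1/N$; this is precisely where the fragmentation structure of the recombination dynamics and the tight, $N$-uniform entropy-production estimate of this paper are essential, and why the Kac program goes through here in the strong form not available for the velocity model of \cite{KAC}.
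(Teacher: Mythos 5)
There is a genuine gap, in fact two. First, a hypothesis mismatch: your long-time regime hinges on the $N$-uniform entropy production estimate for the particle system, $H_N(\mu_{N,t}\tc\gamma_N(\pi))\leq e^{-\kappa t}H_N(\mu_N\tc\gamma_N(\pi))$ with $\kappa$ independent of $N$, which is Theorem \ref{th:ent} and requires $\nu$ to be \emph{strictly} separating. The theorem you are proving assumes only that $\nu$ is separating, and under that weaker hypothesis the particle system need not even be irreducible (Remark \ref{rem:nonreg}), so $\mu_{N,t}$ need not approach $\gamma_N(\pi)$ at all and the route through $\pi^{\otimes k}$ collapses. (The nonlinear relaxation $p_t\to\pi$ that you also use does survive under mere separation, but the particle-side half of your triangle inequality does not.) Note that the paper's argument never requires the particle system to equilibrate: uniform-in-time control comes from contraction properties of the nonlinear and linearized flows themselves.

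Second, for the window up to $T_N\sim\log N$ you correctly identify the crux — preventing discrepancies in a Sznitman-type coupling from proliferating over logarithmic times — but you do not close it. The asserted contraction $\tfrac{d}{dt}\E[D_t]\leq -c\,\E[D_t]+c_2$ is not established and is doubtful as stated: when a discrepant particle collides with a matched pair the discrepancy spreads to the partner, and the comparison processes (independent McKean copies) do not interact, so there is no mechanism by which a separating $\nu$ "resynchronizes" a discrepant pair at an $N$-uniform rate; this is precisely the hard step, not a technicality. The paper takes a different route that avoids the coupling altogether: following Mischler--Mouhot, it interpolates between the moment measures $F_t^{k,N}$ and $\bar F_t^{k,N}$ (Theorem \ref{unifpropchaos}) and feeds in two new contraction estimates, proved via Wild sums and McKean trees, for differences of measures with equal marginals under the nonlinear semigroup $S_t$ and its linearization $\bar S_t(q)$ (Theorems \ref{contrlemma2} and \ref{wasscontrlemma2}, with rate $D(\nu)=1-r(\nu)$, hence only separation is needed), plus the monotonicity of the Wasserstein distance (Lemma \ref{wasscontrlemma}) and the empirical-measure estimate of Corollary \ref{coro3} for the initial term; the $1/N$ from $\|\lambda_{\eta^{i,j,A}}-\lambda_\eta\|_\tv\leq 2/N$ combined with the $e^{-D(\nu)s}$ factors is what makes the time integral uniformly bounded. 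Unless you can actually prove your contractive coupling (or some substitute valid for merely separating $\nu$), the proposal does not yield either assertion of the theorem as stated.
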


\begin{remark}
Concerning the dependency on $N$ it may be that the optimal decay in \eqref{eq:chaos1} is $O(1/N)$ rather than $O(1/\sqrt N)$. This seems natural in light of our estimate \eqref{eq:chaos0} at time zero. 
Moreover, that would be  in agreement with the recent results in \cite{lacker}, where the $O(1/N)$ bound is obtained  for a class of interacting diffusion processes at fixed times. The value of the constant $C$ in \eqref{eq:chaos1} can be in principle obtained from our more detailed results in Theorem \ref{unifpropchaos} below. However, we have not tried to optimize the dependency of $C$ on $k,p$.

\end{remark}

There are by now several results for kinetic models and for mean field diffusions establishing uniform in time propagation of chaos, see \cite{DelMoralGuionnet,cattiaux,elementary,revchaos}.  
However, the adaptation to our setting of the different techniques used in these works does not seem to be straightforward. The proof of Theorem \ref{th:unifintime} is based on some new contractive estimates for the nonlinear model that allow us to implement the main strategy developed in the groundbreaking work of Mischler and Mouhot \cite{kacsprog}, see Section \ref{sec:unifintime}.

\subsection{Entropy production for generalized random transpositions}
Our second main result is about quantitative estimates on the  decay to equilibrium for the particle system introduced above. The key feature is that these estimates hold {\em uniformly in the number of particles} $N$. We shall actually derive such estimates in the context of the {\em generalized random transposition process} defined as follows. 

Let $\cS_{N,n}=S_N^n$ denote the $n$-fold product of the symmetric group $S_N$ of the permutations of $[N]=\{1,\dots,N\}$. Then $\eta\in \cS_{N,n}$ is a matrix $\eta_i(j)$, $i\in[n]$, $j\in[N]$, where each $\eta(j)=(\eta_1(j),\dots,\eta_n(j))\in[N]^n$ is seen as a particle, and each $\eta_i=(\eta_i(1),\dots,\eta_i(N))\in S_N$ is a permutation of $[N]$. Note that $\cS_{N,n} = \O_{\r}$ where $\O_{\r}$ is defined in \eqref{omegarhosets} when we take the extreme case $q_i\equiv N-1$, $\r_{i,x}\equiv 1/N$ for all $i=1,\dots,n$. 

The generalized random transposition (GRT) process is defined as the process generated by the operator  $\cL_N$ in this setup, namely the GRT process is the continuous time Markov process with state space $\cS_{N,n}$ described as follows:  every pair of particles $\{\eta(l),\eta(j)\}$ collides with rate $1/N$ independently, and when a collision occurs, a new set $A$ is sampled according to $\nu$ and the $A$-content of $\eta(l),\eta(j)$ is exchanged.  

This setting is convenient for proving functional inequalities since by restricting to classes of functions with suitable symmetries we then recover all possible cases of processes on $\O_{\r_N}$ with generator $\cL_N$, for all  admissible $\r_N$, see Remark \ref{rem:opt} below for more details.  As an example, consider the case $q_i\equiv 1$ and suppose that $N(\r_N)_{i,1} = N_i$ for some positive integers $N_i$, $i=1,\dots, n$. Here the process generated by $\cL_N$ can be seen as the GRT process restricted to functions $f:\cS_{N,n}\mapsto\bbR$ such that, for each $i$, $f$ only depends on $(\eta_i(1),\dots,\eta_i(N))$ through the unordered set $\{\eta_i(1),\dots,\eta_i(N_i)\}$. 
 This can be seen as a generalized Bernoulli-Laplace process \cite{DS}.
If no confusion arises we continue to write $\cL_N$ for the generator of the GRT.

We remark that when $n=1$, GRT is just the usual random transposition process \cite{DiacShash}, and that when $\nu$ gives positive weight only to $A\subset [n]$ such that $|A|=1$, it describes $n$ independent random transposition processes. However, in the general case, the recombination measure $\nu$ dynamically couples the permutations and the GRT becomes a nontrivial generalization of the standard random transpositions.

In order to guarantee the irreducibility of the GRT process, we make the following assumption on the recombination measure $\nu$, which is easily seen to be  stronger than the separation assumption $r(\nu)<1$; see also Remark \ref{rem:nonreg}.
\begin{definition}\label{def:strict}
We say that $\nu$ is {\em strictly separating} if for all $i\in[n]$ there exists $A\subset [n]$ such that $i\in A$ and such that both $A$ and $A\setminus\{i\}$ have positive $\nu$-probability.  
\end{definition}
Note that the uniform crossover and the one-point crossover are both strictly separating. 
Let $\pi_N$ denote the uniform distribution on $\cS_{N,n}$. The GRT process is reversible with respect to $\pi_N$ and if the measure $\nu$ is strictly separating, then it is also irreducible, and  any initial distribution converges to  $\pi_N$ as $t\to\infty$. To quantify this statement we consider the Dirichlet form of the GRT, defined by
 \begin{align}\label{operatorG}
\cE_{N,n}(f,g)=\frac{1}{2N}\sum_{1\leq j<l\leq N}\sum_{A \subset [n]}\nu(A)
\sum_{\eta\in\cS_{N,n}}\pi_N(\eta)\left(f(\eta^{j,l,A})-f(\eta)\right)\left(g(\eta^{j,l,A})-g(\eta)\right),
\end{align}
where  $f,g:\cS_{N,n} \mapsto\bbR$. 
The {\em entropy production rate} is measured by the constant
\begin{align}\label{deltaoperatorG}
\a(N,n) = \inf_{f>0} \frac{\cE_{N,n}(f,\log f)}{\ent(f)},
\end{align}
where the infimum is over $f:\cS_{N,n} \mapsto\bbR_+$ such that $\ent(f)\neq 0$ and 
$$\ent(f) = \pi_N(f\log f)-\pi_N(f)\log\pi_N(f)$$ is the entropy of $f$ w.r.t.\ $\pi_N$.
 Equivalently, $\a(N,n)$ is the best constant 
$\a$ such that  the inequality 
 \begin{align}\label{gapoperatorG}
\ent(e^{t\cL_N}f)\leq e^{-\a t}\ent(f)
\end{align}
holds for all functions $f>0$; see e.g.\ \cite{DS,BT}. Note that when $\pi_N(f)=1$ then, for all $t\geq 0$, $\ent(e^{t\cL_N}f)$ coincides with the relative entropy 
$H_N(\mu_{N,t}\tc\pi_N)$ 
where 
$\mu_{N,t}
=(e^{t\cL_N}f)\pi_N$.

We also consider the entropy production rate restricted to the set of symmetric functions defined as follows. Let $\bbS$ denote the set of $f:\O^N\mapsto \bbR$ such that $$f(\eta)=\frac1{N!}\sum_{\t\in S_N} f(\t\circ\eta),$$
where the sum runs over all permutations $\t\in S_N$ and $\t\circ\eta$ denotes the configuration with particles exchanged according to $\t$, that is $\t\circ\eta(j) = \eta(\t(j))$.
From the point of view of Kac's program \cite{KAC}, $\bbS$ is the relevant space of observables in the particle system. We call $\a_{\bbS}(N,n)$ the constant defined as in \eqref{deltaoperatorG}, with the infimum restricted to positive  functions $f\in\bbS$. 

Our main results for the GRT process are the following estimates independent of $N$.

\begin{theorem}\la{th:ent}
Fix $n\in\bbN$ and assume that $\nu$ is strictly separating. Then there exists $\a(\nu)>0$ such that for any $N\in\bbN$, $N\geq 2$,
 \begin{equation}
\label{entao}
\a(N,n)\geq \a(\nu).
\end{equation}
Moreover, if $\nu$ is the one-point crossover, then 
 \begin{equation}
\label{enta}
\a(N,n)\geq \frac{1}{4(n+1)}\,,
\end{equation}
and if $\nu$ is the uniform crossover, then 
 \begin{equation}
\label{entas}
\a(N,n)\geq \frac{1}{4n}\,,\qquad \a_\bbS(N,n)\geq \frac1{2(n+2)}\,.
\end{equation}
\end{theorem}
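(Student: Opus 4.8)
The plan is to combine a tensorization of the entropy over the $n$ permutation-valued coordinates with a modified log-Sobolev inequality for the random transposition walk on $S_N$ and a path-comparison of Dirichlet forms. Throughout, $\pi_N$ is the $n$-fold product of the uniform measure $\mu$ on $S_N$, and I will use the elementary identity $\cE_m(f,\log f)=\pi_N\!\left[f\log(f/f{\circ}m)\right]$, valid for any $\pi_N$-reversible involution $m$, together with the fact that the functional $f\mapsto\cE_{N,n}(f,\log f)$ is additive over the decomposition of $\cL_N$ into the elementary moves $\eta\mapsto\eta^{j,l,A}$, so that lower bounds on $\cE_{N,n}(f,\log f)$ can be assembled piece by piece.

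First I reduce to a product of single-coordinate problems. For the uniform crossover one works directly in the coordinates $\eta_1,\dots,\eta_n$; for the one-point crossover one first performs the $\pi_N$-preserving change of variables $\zeta_1=\eta_1$, $\zeta_i=\eta_i\circ\eta_{i-1}^{-1}$ for $i\geq2$, which preserves the product structure ($\pi_N$ is still $n$ independent uniform permutations in the $\zeta$'s) and under which a collision of a pair $\{j,l\}$ with the set $J_k$ acts as the transposition of two uniformly random distinct positions in $\zeta_1$ composed with the transposition of two uniformly random distinct positions in $\zeta_{k+1}$ (doing nothing when $k=0$, and touching only $\zeta_1$ when $k=n$); hence in the $\zeta$-parametrization each coordinate performs random transpositions, the $i$-th one being acted on with $\nu$-probability at least $1/(n+1)$ at each collision. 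In either parametrization, subadditivity of the entropy for product measures gives $\Ent(f)\leq\sum_{i=1}^n\pi_N\!\left[\Ent^{(i)}(f)\right]$, where $\Ent^{(i)}(f)$ denotes the entropy of $f$ in the $i$-th coordinate with the others frozen.

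The crucial analytic input is a modified log-Sobolev inequality for the random transposition walk on $S_N$ with constant independent of $N$: there exists $c_0>0$, independent of $N$, such that $\cE^{\mathrm{RT}}_N(g,\log g)\geq c_0\,\Ent_\mu(g)$ for every $g>0$ on $S_N$, where $\cE^{\mathrm{RT}}_N$ is the Dirichlet form of the generator $g\mapsto\tfrac1N\sum_{j<l}(g\circ(jl)-g)$. This is the one ingredient that cannot be deduced from the spectral gap nor from the log-Sobolev constant of random transpositions (the latter degrades like $1/\log N$), and must be established by an entropy-dissipation / Bakry--Émery type argument, in the spirit of the known modified log-Sobolev estimates for random transpositions and Bernoulli--Laplace models (cf.\ \cite{BT,DS}). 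Applying this inequality in each coordinate with the remaining coordinates frozen and averaging, one gets $\pi_N[\Ent^{(i)}(f)]\leq c_0^{-1}\cE^{\mathrm{RT},i}_N(f,\log f)$, where $\cE^{\mathrm{RT},i}_N$ is the random transposition Dirichlet form acting only on the $i$-th coordinate; summing over $i$, $\Ent(f)\leq c_0^{-1}\sum_{i=1}^n\cE^{\mathrm{RT},i}_N(f,\log f)$.

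It remains to dominate $\sum_i\cE^{\mathrm{RT},i}_N(f,\log f)$ by $\cE_{N,n}(f,\log f)$ up to a factor depending only on $\nu$ and the fixed $n$. By additivity of the log-Dirichlet form over elementary moves it is enough to control each single-coordinate move $T^{j,l}_{\{i\}}$ (``transpose coordinate $i$ on the pair $\{j,l\}$'') by GRT moves: for the uniform crossover the GRT move with set $A$ is the commuting product $\prod_{i\in A}T^{j,l}_{\{i\}}$ and each factor is recovered with $\nu$-weight $1/2$; for the one-point crossover (in the $\zeta$-coordinates) $T^{j,l}_{\{i\}}$ sits, up to composition with a commuting transposition acting on a different coordinate, inside a $J_k$-collision of $\nu$-weight $1/(n+1)$; for a general strictly separating $\nu$ one invokes Definition \ref{def:strict} to write $T^{j,l}_{\{i\}}=T^{j,l}_{A}\circ T^{j,l}_{A\setminus\{i\}}$ as a product of two commuting $\pi_N$-reversible moves of positive $\nu$-weight. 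In each case a comparison estimate for the entropy Dirichlet form along short paths of commuting moves gives $\cE^{\mathrm{RT},i}_N(f,\log f)\leq C_i\,\cE_{N,n}(f,\log f)$ with $C_i$ controlled by the path length and the reciprocals of the $\nu$-weights used; tracking the constants through the three displayed inequalities yields $\a(N,n)\geq c_0/\max_i C_i>0$, and the explicit bounds $\tfrac1{4(n+1)}$ and $\tfrac1{4n}$ for the one-point and uniform crossovers, while $\a_{\bbS}(N,n)\geq\tfrac1{2(n+2)}$ follows by running the same scheme on symmetric functions, where the full-exchange move $T^{j,l}_{[n]}$ acts trivially and a slightly more efficient decomposition is available. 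The \emph{main obstacle} is the $N$-independent single-coordinate input above, which lies genuinely below the spectral gap and the (degenerate) log-Sobolev constant; a secondary technical point is that the path-comparison must be carried out at the level of the entropy Dirichlet form $\cE(f,\log f)$ rather than the classical $L^2$ Dirichlet form. The reductions — change of variables, subadditivity, bookkeeping of constants — are routine.
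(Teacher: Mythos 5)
Your overall skeleton (tensorize the entropy over coordinates, feed in the $N$-independent modified log-Sobolev inequality for random transpositions of Gao--Quastel/Goel, and use strict separation to recover single-coordinate moves from GRT moves) parallels the paper's strategy, but the step you rely on to close the argument does not exist: there is no ``comparison estimate for the entropy Dirichlet form along short paths''. For the $L^2$ form $\cE(f,f)$ path comparison works because telescoping plus Cauchy--Schwarz controls $(f(\eta')-f(\eta))^2$ by the squared increments along a path; for the kernel $(a-b)\log(a/b)$ the analogous two-point inequality
\begin{equation}
(a-c)\log\tfrac{a}{c}\;\leq\; C\Big[(a-b)\log\tfrac{a}{b}+(b-c)\log\tfrac{b}{c}\Big]
\end{equation}
is false for every constant $C$: taking $b=e^{y}$, $a=e^{y+s}$, $c=e^{y-e^{s}}$ the left side is of order $e^{y}e^{2s}$ while the right side is of order $e^{y}e^{s}(s+1)$, so the ratio blows up as $s\to\infty$. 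Hence your claimed bound $\cE^{\mathrm{RT},i}_N(f,\log f)\leq C_i\,\cE_{N,n}(f,\log f)$, obtained by decomposing $T^{j,l}_{\{i\}}=T^{j,l}_{A}\circ T^{j,l}_{A\setminus\{i\}}$ and ``comparing along the path'', is unjustified, and this is not a secondary technical point but the heart of the matter; the $n=1$ input you single out as the main obstacle is in fact available in the literature (it is exactly the paper's Lemma \ref{GQlemma}).

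The paper circumvents the missing comparison by never comparing Dirichlet forms at all. It introduces the permutation entropies $\phi(A;f)=\mu[f\log(f/P_Af)]$, where $P_A$ symmetrizes over permutations of the $A$-block; since jointly permuting the $A$-components of the $N$ particles is again a copy of random transpositions, Lemma \ref{GQlemma} applies blockwise and gives $\phi(A)\leq 2\,\cE_A(f,\log f)$ (Lemma \ref{propphiA}), i.e.\ an upper bound by genuine GRT moves with no path argument. The new ingredient is then Lemma \ref{keyphi}: using commuting projections and the variational principle for relative entropy one shows $\phi(A)+\sum_{i\in V}\phi(A\cup\{i\})\geq \mu[\ent_V f]$ for $V\subset A^c$, which is what converts strict separation (both $A_i$ and $A_i\setminus\{i\}$ charged) into control of the single-site entropies, and, combined with tensorization \eqref{tensor} and Shearer's inequality (Lemma \ref{lem:shearer}), yields the explicit constants $1/4(n+1)$, $1/4n$ and, using $\phi(A)=\phi(A^c)$ on $\bbS$, the bound $1/2(n+2)$. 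If you want to salvage your route you would need a substitute for the entropy-form path comparison; the paper's permutation-entropy lemma is precisely that substitute. (Your change of variables $\zeta_i=\eta_i\circ\eta_{i-1}^{-1}$ for the one-point crossover also needs care: the updates of $\zeta_1$ and $\zeta_{k+1}$ induced by a $J_k$-collision act on positions determined by the current configuration and are correlated, so the dynamics is not simply a product of independent random transposition walks.)
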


\begin{remark}
\label{rem:opt}
Consider the 
entropy production rate $\a(\O_{\r_N})$ for the process on $\O_{\r_N}$ associated to any admissible density sequence $\r_N$. The uniform distribution on $\O_{\r_N}$ is the image of the probability $\pi_N$ (the uniform distribution on $\cS_{N,n}$) under a simple symmetrization procedure, and the  quantity $\a(\O_{\r_N})$ can be defined as in 
\eqref{deltaoperatorG} by restricting to invariant classes of functions with suitable symmetries; see e.g.\ \cite[Section 4.2.3]{barthe_et_al} for a version of this simple projection argument. 
The estimates of Theorem \ref{th:ent} then immediately provide the lower bound
 \begin{equation}
\label{lowbos}
\a(\O_{\r_N})\geq \a(N,n)\geq \a(\nu)\,,
\end{equation}
for any admissible density sequence $\r_N$.
\end{remark}
\begin{remark}
\label{rem:nonreg}
The statement in Theorem \ref{th:ent} implies exponentially fast convergence to stationarity for the GRT under the strict separation assumption, see \eqref{gapoperatorG}. In particular, it implies irreducibility. If the recombination measure $\nu$ is only assumed to be separating, then the GRT process may fail to be irreducible. Therefore, some assumption such as the  strict separation defined above is necessary for the statement in Theorem \ref{th:ent}. For an example of non irreducible process with separating $\nu$  consider $N=2,n=4$ and suppose $\nu(A)=\frac16$ for all $A\subset[4]$ with $|A|=2$. Clearly, $\nu$ is separating, but if we consider the initial configuration $\eta$ with $\eta(1)=0000, \eta(2)=1111$, then the number of $1$'s in each particle remains even at all times.  
\end{remark}

The proof of Theorem \ref{th:ent}  will be based on some new functional inequalities for permutations which imply a modified logarithmic Sobolev inequality for the GRT. We refer to Section \ref{sec:entdec}.

Concerning upper bounds on the constant $\a(N,n)$ we establish an estimate valid for arbitrary $\nu$, which essentially shows that $\a(N,n)$ cannot be larger than $4/n$ for $n$ large, provided $N$ is taken large enough, possibly depending on $n$. 
\begin{proposition}\label{prop:upbo}
For any $n\in\bbN$, any distribution $\nu$ on $[n]$, 
\begin{equation}
\label{entalow}
\limsup_{N\to\infty}\a(N,n)\leq \frac4n + O\left(\frac1{n^2}\right).
\end{equation}
\end{proposition}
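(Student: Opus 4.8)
The plan is to bound $\a(N,n)$ from above by inserting a well–chosen family of trial densities $f=f_N$ into the variational formula \eqref{deltaoperatorG} and then letting $N\to\infty$. By the reduction to invariant classes of symmetric functions explained in Remark~\ref{rem:opt}, it is enough to produce such trial densities directly for the GRT on $\cS_{N,n}$.

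For the first part I would use the ``single–column product'' trial densities $f(\eta)=\prod_{i=1}^{n}\psi(\eta_i)$, where $\psi\colon S_N\to\bbR_+$ is a fixed probability density with respect to the uniform law $u_N$ on $S_N$. Two inputs are needed. First, tensorization of entropy over the $n$ independent columns gives $\ent(f)=n\,\ent(\psi)$ (entropy with respect to $u_N$). Second — and this is the main computation — the Dirichlet form \eqref{operatorG} decouples,
\begin{align}
\cE_{N,n}(f,\log f)=\E_\nu[|A|]\,\cE_{N,1}(\psi,\log\psi),
\end{align}
where $\cE_{N,1}$ is the Dirichlet form of the plain random transposition process (the $n=1$ case of the GRT, of spectral gap $1$ in the present normalization). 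Indeed, expanding $(f^{j,l,A}-f)(\log f^{j,l,A}-\log f)$ and using that, under $u_N^{\otimes n}$, the pairs $(\psi(\eta_i),\psi((j\,l)\eta_i))$ are i.i.d.\ in $i$ with $\E[\psi]=1$, one checks that every column other than a single ``active'' one factors out with expectation $1$, leaving exactly $|A|$ copies of the one–column contribution. Optimizing over $\psi$ in \eqref{deltaoperatorG} then yields
\begin{align}
\a(N,n)\leq \frac{\E_\nu[|A|]}{n}\,\a(N,1).
\end{align}
The second input is an upper bound on $\a(N,1)$ that is uniform in $N$. Since $\a(N,1)\leq 2\,\gap(N,1)=2$, it suffices for the leading term to show $\limsup_{N\to\infty}\a(N,1)\leq 2$, i.e.\ that random transpositions essentially saturate the Poincaré–to–entropy ratio; this is proved by exhibiting, for each large $N$, a density $\psi_N$ whose entropy–production ratio tends to $2$ — a smoothed indicator of a moderate–deviation event for the slowest (second–eigenfunction) additive observable of random transpositions is the natural candidate, and one computes its ``escape rate'' to be $2+O(1/N)$. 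Combined with $\E_\nu[|A|]\leq n$ this already gives $\limsup_N\a(N,n)\leq 2+o_n(1)$, and for any $\nu$ for which $\E_\nu[|A|]$ stays bounded as $n\to\infty$ (for instance $\nu$ supported on pairs) it gives the full statement $\limsup_N\a(N,n)\leq \tfrac4n+O(1/n^2)$.

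I expect the main obstacle to be the complementary regime $\E_\nu[|A|]\to\infty$, which includes the two basic examples: for uniform and one–point crossover $\E_\nu[|A|]=n/2$, and the column–product trial density only gives $\a(N,n)\lesssim \tfrac12$, far from $4/n$. Recovering the $4/n$ decay there requires a trial density that is genuinely sensitive to the way a single collision couples the coordinates, and which effectively involves only $O(1)$ of the $n$ sites rather than a whole permutation — e.g.\ a function localized on a single pair of sites, built from the explicit eigenfunction $g-c_1c_2$ appearing in the spectral–gap computation, or (taking $N\to\infty$) a function of $O(1)$ coordinates of a tagged particle, whose limiting one–particle evolution is the mean–field operator $\sum_A\nu(A)\,(\pi_A\otimes\,\cdot\,)-\mathrm{id}$; the difficulty is to produce the correct ``effective rate'' in front of a one–coordinate modified–log–Sobolev ratio, to control the $N\to\infty$ passage, and to verify that the resulting constant is $4$ \emph{uniformly in} $\nu$. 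The $O(1/n^2)$ in the statement is there precisely to absorb the subleading corrections in all of these estimates.
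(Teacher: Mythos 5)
There is a genuine gap. Your column-product computation is fine as far as it goes: for $f(\eta)=\prod_{i=1}^n\psi(\eta_i)$ the cross terms indeed vanish (using that each column map $\eta_i\mapsto\eta_i^{j,l}$ preserves the uniform law and $\pi_N(\psi)=1$), so you get $\a(N,n)\leq \frac{m(\nu)}{n}\,\a(N,1)$ with $m(\nu)$ the mean of $|A|$ under $\bar\nu$, and $\a(N,1)\leq 2\,\gap=2$ is immediate (no moderate-deviation construction is needed for that). But the proposition is asserted for \emph{every} $\nu$, and your bound only yields $4/n+O(1/n^2)$ when $m(\nu)=O(1)$; for the central examples (uniform and one-point crossover) $m(\nu)\sim n/2$ and your trial class gives only an $O(1)$ upper bound, as you yourself note. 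The sketched remedies do not close this: a test function depending on a single pair of sites (the spectral-gap eigenfunction) cannot produce a ratio of order $1/n$ — the $1/n$ rate is an entropy phenomenon tied to initial data that are ``spread out'' over all $n$ coordinates and have a slow entropy start — and the tagged-particle/mean-field idea is left entirely unquantified (no trial density, no computation of the ratio, no control of the $N\to\infty$ passage, no argument that the constant is $4$ uniformly in $\nu$). So the statement as claimed is not proved.

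For comparison, the paper does not guess an $N$-particle trial function directly. It imports the known one-particle example from \cite{entprod}: on $\O=\{0,1\}^n$, $p=w^2\d_{\underline 1}+(1-w)^2\d_{\underline 0}+2w(1-w)\,\cU$ with $w=2^{-n}$, for which the \emph{nonlinear} entropy production ratio satisfies $D_\pi(f)/\ent_\pi f\leq \frac4n+O(1/n^2)$ with $f=p/\pi$, uniformly over the choice of $\nu$. It then takes as trial density the conditioned tensor product $f_N=\g(p,\r_N)/\g(\pi,\r_N)$, bounds $\a(N,n)\leq \cE(f_N,\log f_N)/\ent f_N$, and uses the local-CLT-based chaos results (Proposition \ref{dff2tyhm} for $D_N(f_N)/N\to D_\pi(f)$ and Proposition \ref{th:entchaos} for $\frac1N\ent f_N\to H(p\tc\pi)$) to pass to the limit $N\to\infty$. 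Note this trial density is neither a product over columns nor localized on $O(1)$ sites — it is exactly the ``slow-start'' nonlinear datum lifted to the particle system — which is the ingredient missing from your argument in the regime $m(\nu)\to\infty$.
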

In this sense, the bounds in \eqref{enta} and \eqref{entas} can be considered to be optimal. 

\subsection{Kac's program completed}
One of the main motivations behind Kac's program is the derivation of quantitative bounds on the speed of convergence to equilibrium for the nonlinear equation. In our setting, as a corollary of our analysis we obtain the following relative entropy estimates. We refer to \cite{Vil,entfound} for a discussion of related entropy decay estimates in the context of kinetic models. In particular, in our setup, one can say that Cercignani's conjecture holds true. See also \cite{erbar} for related results in a discrete setting under positive curvature assumptions. 
\begin{theorem}\label{entprodthm}
Assume that $\nu$ is strictly separating. For any $p\in\cP(\O)$, let $p_t$ denote the solution of \eqref{boltzeq} with $p_0=p$ and let $\pi=\otimes_{i=1}^n p_i$ denote the associated equilibrium.  Then for all $t\geq 0$,
\begin{align}\label{entproduct}
H(p_t\tc\pi) \leq e^{-\a(\nu) t}H(p\tc\pi),
\end{align}
where $\a(\nu)>0$ is the constant in Theorem \ref{th:ent}. In particular, $\a(\nu)\geq 1/4(n+1)$ for one-point crossover, and $\a(\nu)\geq 1/2(n+2)$ for uniform crossover. 
\end{theorem}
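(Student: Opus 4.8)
Here is how I would prove Theorem~\ref{entprodthm}.

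The plan is to transfer the $N$-uniform entropy production estimate of Theorem~\ref{th:ent} to the nonlinear equation by means of propagation of chaos; this is precisely the payoff that Kac's program is designed to produce. I would first reduce to an \emph{irreducible} initial datum. Replacing $p$ by $p^{(\e)}:=(1-\e)p+\e\pi$ produces a measure of full support, hence irreducible in the sense of Definition~\ref{def:irred}, with the same single-site marginals as $p$ and therefore the same equilibrium $\pi$; moreover $p^{(\e)}\to p$ weakly as $\e\downarrow0$. Since $\O$ is finite and $\pi$ has full support, $H(\,\cdot\,\tc\pi)$ is continuous on $\cP(\O)$, and since the quadratic flow \eqref{boltzeq} depends continuously on its initial datum, $p^{(\e)}_t\to p_t$ for every fixed $t$; hence \eqref{entproduct} for all $p^{(\e)}$, $\e>0$, forces \eqref{entproduct} for $p$. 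From now on $p$ is irreducible.

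Fix $t\geq0$. Let $\r_N$ be the admissible sequence $\rhoN$ of \eqref{rhoseq} built from the marginals of $p$, let $\mu_N:=\gamma(p,\r_N)=\g_N(p)$ be the canonical tensor product, and recall from Definition~\ref{def:canonical} and the discussion after it that $\gamma(\pi,\r_N)$ is the uniform probability on $\O_{\r_N}$, which is reversible for $\cL_N$ restricted to $\O_{\r_N}$. Because the dynamics conserves the density vector, $\mu_{N,s}:=\mu_Ne^{s\cL_N}$ is supported on $\O_{\r_N}$ for every $s\geq0$, and it is symmetric since $\mu_N$ is and $\cL_N$ commutes with permutations of the particles. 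The density $f_N:=\dd\mu_N/\dd\gamma(\pi,\r_N)$ is a positive function on $\O_{\r_N}$, symmetric under particle permutations, with $\gamma(\pi,\r_N)(f_N)=1$; applying to it the entropy production estimate on $\O_{\r_N}$ — namely $\a(\O_{\r_N})\geq\a(\nu)$ of Theorem~\ref{th:ent} together with Remark~\ref{rem:opt}, in the exponential-decay form \eqref{gapoperatorG} — yields
\begin{align}\label{e:EPtransfer}
H_N\big(\mu_{N,t}\tc\gamma(\pi,\r_N)\big)=\ent(e^{t\cL_N}f_N)\leq e^{-\a(\nu)t}\,\ent(f_N)=e^{-\a(\nu)t}\,H_N\big(\mu_{N}\tc\gamma(\pi,\r_N)\big),
\end{align}
and, since $f_N$ is symmetric, in the uniform crossover case one may use here the sharper symmetric constant $\a_\bbS(N,n)\geq\tfrac1{2(n+2)}$ of \eqref{entas}.

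It remains to divide \eqref{e:EPtransfer} by $N$ and pass to the limit. On the right, the canonical tensor product is entropically $p$-chaotic, so $\tfrac1N H_N(\mu_N\tc\gamma(\pi,\r_N))\to H(p\tc\pi)$ by \eqref{entropicchaos} (Proposition~\ref{th:entchaos}). On the left I only need a lower bound. By the local central limit theorem (Theorem~\ref{th:eqens}) $\pi^{\otimes N}(\O_{\r_N})$ is only polynomially small in $N$, so for the symmetric measure $\mu_{N,t}$, which is supported on $\O_{\r_N}$, we have $\tfrac1N H_N(\mu_{N,t}\tc\gamma(\pi,\r_N))=\tfrac1N H_N(\mu_{N,t}\tc\pi^{\otimes N})+o(1)$. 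Moreover $\mu_N$ is $p$-chaotic (Theorem~\ref{th:eqens}), hence by propagation of chaos at the fixed time $t$ (Theorem~\ref{th:unifintime}, or the fixed-time statement recalled after Definition~\ref{kacschaosprop}) $\mu_{N,t}$ is $p_t$-chaotic, and the standard lower semicontinuity of the normalized relative entropy along chaotic sequences with product reference gives $\liminf_{N\to\infty}\tfrac1N H_N(\mu_{N,t}\tc\pi^{\otimes N})\geq H(p_t\tc\pi)$. Combining, $\liminf_{N\to\infty}\tfrac1N H_N(\mu_{N,t}\tc\gamma(\pi,\r_N))\geq H(p_t\tc\pi)$. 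Taking $\liminf_N$ on the left and $\lim_N$ on the right of the normalized \eqref{e:EPtransfer} gives $H(p_t\tc\pi)\leq e^{-\a(\nu)t}H(p\tc\pi)$, which is \eqref{entproduct}; the explicit constants for one-point and uniform crossover follow from the corresponding bounds in Theorem~\ref{th:ent}.

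All the substance is already in Theorem~\ref{th:ent}: once the entropy production of the particle system is bounded uniformly in $N$, the present statement is an assembly of that bound with the chaos estimates. The two steps that require care are (i) the passage from the conditioned reference measure $\gamma(\pi,\r_N)=\pi^{\otimes N}(\,\cdot\mid\O_{\r_N})$ to the product measure $\pi^{\otimes N}$, controlled by the polynomial lower bound on $\pi^{\otimes N}(\O_{\r_N})$ from the local central limit theorem; and (ii) the entropic-chaos lower bound $\liminf_N\tfrac1N H_N(\mu_{N,t}\tc\pi^{\otimes N})\geq H(p_t\tc\pi)$, which follows from subadditivity of relative entropy for symmetric measures together with the fixed-time marginal convergence $P_k\mu_{N,t}\to p_t^{\otimes k}$. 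Both are classical, but must be invoked with the correct normalization; the initial reduction to irreducible $p$ is included because $\g_N(p)$ need not be well defined for every $N$ when $p$ is degenerate.
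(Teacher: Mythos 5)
Your proposal is correct and follows essentially the same route as the paper: regularize to an irreducible $p^{(\e)}=\e\pi+(1-\e)p$, apply the $N$-uniform entropy decay of Theorem \ref{th:ent} (with Remark \ref{rem:opt}) on $\O_{\r_N}$, identify the limit of the right side via entropic chaos (Proposition \ref{th:entchaos}) and lower-bound the left side by the Shearer/subadditivity argument that the paper packages as Proposition \ref{th:entUPSC}, then let $\e\to0$. The only cosmetic differences are that you re-derive that upper semicontinuity step instead of citing it and justify $p^{(\e)}_t\to p_t$ by general continuity of the flow rather than by \eqref{nonlincont1}.
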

It is interesting to note that the constant $\a(\nu)$ does not depend on the initial datum $p$ in any way. We point out that, in the case of  the one-point crossover and uniform crossover, the above estimates were already obtained in \cite{entprod} by direct analysis of the entropy production functional of the nonlinear equation, with a slightly better constant actually: $\a(\nu)\geq 1/(n+1)$ in both cases. Moreover,  \cite{entprod} also shows that the $1/n$ decay of the constant $\a(\nu)$ in these cases is optimal up to a constant independent of $n$. Besides extending the bounds of \cite{entprod} to all strictly separating distribution $\nu$, an interesting feature of Theorem \ref{entprodthm} is that its proof takes a completely different route. Namely, it is based on the implementation in our setting of Kac's original idea. More precisely, \eqref{entproduct} is derived from the uniform control on entropy production provided by Theorem \ref{th:ent}, see also Remark \ref{rem:opt}, together with the approximation, as $N\to\infty$, of both $H(p\tc\pi) $ and $H(p_t\tc\pi)$ in terms of the corresponding entropies for the $N$-particle system. 

We also obtain the following general bounds on the convergence to equilibrium for the nonlinear chain.  Recall the definition \eqref{def:rnu} of the constant $r(\nu)\in(0,1)$.
\begin{theorem}\label{prop:tvbo}
Assume that $\nu$ is separating. For any $p\in\cP(\O)$, let $p_t$ denote the solution of \eqref{boltzeq} with $p_0=p$ and let $\pi=\otimes_{i=1}^n p_i$ denote the associated equilibrium.  Then for all $t\geq 0$,
\begin{align}\label{entproductnoncontr}
H(p_t\tc\pi) \leq \tfrac12\,n(n-1)\,H(p\tc\pi)\,e^{-D(\nu)\,t},
\end{align}
where $D(\nu):=1-r(\nu)$.
Moreover, for the total variation distance we have
\begin{align}\label{tvproduct}
\|p_t-\pi\|_\tv \leq \tfrac12\,n(n-1)\,\|p-\pi\|_\tv\,e^{-D(\nu)\, t}.
\end{align}
\end{theorem}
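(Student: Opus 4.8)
The plan is to derive both estimates from a single structural fact — a continuous‑time Geiringer‑type formula that represents the solution of \eqref{boltzeq} as an average over a fragmentation process. Concretely, I would first establish that
\[
p_t=\E\Big[\bigotimes_{B\in\Pi_t}p_B\Big],
\]
where $p_B$ (resp.\ $\pi_B$) denotes the $B$-marginal of the \emph{initial} datum $p$ (resp.\ of $\pi$), and $\Pi_t$ is the Markov process on partitions of $[n]$ started from the one‑block partition $\{[n]\}$ in which each block $B$, independently and at rate $1$, samples a set $A\subset[n]$ with law $\nu$ and is replaced by $B\cap A$ and $B\cap A^c$ (nothing happens if $B\subseteq A$ or $B\subseteq A^c$). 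The quickest verification is the renewal identity obtained by conditioning on the first update of the top block: with $q_t:=\E[\bigotimes_{B\in\Pi_t}p_B]$ one gets $q_t=e^{-t}p+\int_0^t e^{-(t-u)}\sum_{A\subset[n]}\nu(A)\,q_{u,A}\otimes q_{u,A^c}\dd u$, and differentiation shows that $q_t$ solves \eqref{boltzeq}, so $q_t=p_t$ by uniqueness (using that marginals of the solution solve the recombination equation on the corresponding sub‑chain, and induction on $n$); equivalently, the formula is the $N\to\infty$ genealogy of a tagged particle, whose $n$ sites trace back to a partition of independent ancestors. It recovers the classical facts already used above: $p_{t,i}=p_i$, and $\Pi_t\to$ finest partition for separating $\nu$, hence $p_t\to\pi$.

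The only quantitative input about $\Pi_t$ that I need concerns pairs. Write $i\sim_{\Pi_t}j$ when $i,j$ lie in a common block. Blocks only ever split, so separation is permanent; and the block carrying $\{i,j\}$ is updated at rate $1$, each update separating them with probability $c_{ij}:=1-\text{Pr}_\nu(\{i,j\}\subseteq A\text{ or }\{i,j\}\subseteq A^c)$, which is $\geq 1-r(\nu)=D(\nu)$ by \eqref{def:rnu}. Hence the separation time of $i$ and $j$ is exponential of rate $c_{ij}$ and $\Pr(i\sim_{\Pi_t}j)=e^{-c_{ij}t}\leq e^{-D(\nu)t}$.

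Now both bounds follow. For a partition $\xi$ of $[n]$ let $N_2(\xi)$ be the number of blocks of size $\geq 2$; since distinct non‑singleton blocks contain disjoint pairs, $N_2(\xi)\leq\#\{\{i,j\}:i\sim_\xi j\}$, so $\E[N_2(\Pi_t)]\leq\sum_{i<j}e^{-c_{ij}t}\leq\tfrac12 n(n-1)e^{-D(\nu)t}$. For the relative entropy: $\pi=\bigotimes_{B\in\xi}\pi_B$ (product measure), so additivity of relative entropy over products together with the data‑processing bound $H(p_B\tc\pi_B)\leq H(p\tc\pi)$ (and $H(p_{\{i\}}\tc\pi_i)=0$ since $\pi_i=p_i$) gives $H\big(\bigotimes_{B\in\xi}p_B\,\|\,\pi\big)=\sum_{|B|\geq2}H(p_B\tc\pi_B)\leq N_2(\xi)\,H(p\tc\pi)$; inserting the representation into the convexity of $\mu\mapsto H(\mu\tc\pi)$ yields $H(p_t\tc\pi)\leq\E[N_2(\Pi_t)]\,H(p\tc\pi)$, which is \eqref{entproductnoncontr}. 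For total variation one replaces additivity by a block‑by‑block telescoping, $\|\bigotimes_{B\in\xi}p_B-\pi\|_\tv\leq\sum_{|B|\geq2}\|p_B-\pi_B\|_\tv\leq N_2(\xi)\,\|p-\pi\|_\tv\leq\tfrac n2\|p-\pi\|_\tv$ (singleton blocks contribute $0$, and marginalization contracts $\|\cdot\|_\tv$); since the left‑hand side vanishes on $\{\Pi_t=\text{finest}\}$, $\|p_t-\pi\|_\tv\leq\tfrac n2\|p-\pi\|_\tv\,\Pr(\Pi_t\neq\text{finest})\leq\tfrac n2\|p-\pi\|_\tv\sum_{i<j}e^{-c_{ij}t}$, which is \eqref{tvproduct}.

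The only step that is not soft is the representation. The delicate point is to get the fragmentation process right: because $Q$ is nonlinear, the naive guess — refining the \emph{whole} partition by one common $A\sim\nu$ at rate $\nu(A)$ — does \emph{not} reproduce \eqref{boltzeq}, and one really needs each block to be fragmented by its own independent copy of $\nu$. Once this is in place everything downstream is elementary, and in particular the polynomial‑in‑$n$ constants come for free from the fact that $\Pi_t$ decouples into independent exponential pair clocks; a self‑contained alternative is a Duhamel induction on the marginal hierarchy $\dot p_{t,B}=Q_B(p_{t,B})-p_{t,B}$ (the recombination on the sub‑chain $B$), but one must then work to rule out spurious powers of $t$ or factors $e^{cn}$.
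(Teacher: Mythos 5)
Your proposal is correct, and at its core it is the same mechanism as the paper's proof: your partition-valued fragmentation representation $p_t=\E[\bigotimes_{B\in\Pi_t}p_B]$ is exactly the Wild sum/McKean tree expansion \eqref{eq0} of the paper (there the blocks are the sets $V_1(\vec A),\dots,V_k(\vec A)$, explicitly described as a fragmentation of $[n]$), your exponential pair-clock computation $\Pr(i\sim_{\Pi_t}j)=e^{-c_{ij}t}\leq e^{-D(\nu)t}$ plays precisely the role of Lemma \ref{lemma:omega} (which computes $\E[\o(\g)]=e^{-D(\nu)t}$ over trees), and your entropy chain (convexity of $H(\cdot\tc\pi)$ over the mixture, additivity over the product $\pi$, data processing $H(p_B\tc\pi_B)\leq H(p\tc\pi)$, singleton blocks vanishing, then counting cohabiting pairs) is line-for-line the paper's derivation of \eqref{entproductnoncontr}. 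The genuine divergence is in the total variation bound: the paper obtains \eqref{tvproduct} as the special case $q=\pi$ of the general contraction estimate of Theorem \ref{contrlemma2}, proved by a coupling of $p$- and $q$-samples placed at the leaves of the McKean trees, and that general two-solution version is needed elsewhere (for the uniform-in-time propagation of chaos); you instead exploit that the target is the product $\pi$, telescoping block by block to get $\|\bigotimes_B p_B-\pi\|_\tv\leq N_2(\Pi_t)\|p-\pi\|_\tv\leq\tfrac n2\|p-\pi\|_\tv$ off the event that $\Pi_t$ is the finest partition, and then a union bound, which is more elementary and recovers the same constant $\tfrac14 n^2(n-1)$, but does not give the contraction between two arbitrary solutions with matching marginals. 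The one nontrivial input you rely on is the representation itself; you correctly flag the pitfall (each block must refragment by its own independent $\nu$-sample), and your renewal-identity verification goes through once one also checks the consistency of the fragmentation under restriction to a block (equivalently, that $A$-marginals of solutions solve the recombination equation on the sub-chain), which in the paper is replaced by the already-established Wild/McKean construction \eqref{wildsums}--\eqref{eq0}.
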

We note that an estimate similar to \eqref{tvproduct} was obtained in \cite{Sinetal2} for a discrete time version of the nonlinear process. To prove Theorem  \ref{prop:tvbo} we use a coupling argument similar to that of \cite{Sinetal2}, together with an explicit construction of the continuous time solution $p_t$ in terms of all possible collision histories, which goes back to the pioneering works of Wild \cite{Wild} and McKean \cite{mckean,mckean1966speed}, see also \cite{carlenwild}. It is interesting to note that in the case of uniform crossover one has $r(\nu)=1/2$ and thus \eqref{entproductnoncontr} provides an exponential decay which is much faster, as $n$ becomes large, than the one provided by \eqref{entproduct}. Moreover, as mentioned, the $1/n$ rate is known to be optimal up to a constant independent of $n$ for the estimate  \eqref{entproduct}. This mismatch can be explained by observing that, because of the possibly large prefactor, \eqref{tvproduct} only provides information about the large time behavior while  
\eqref{entproduct} expresses a contraction property of the relative entropy at all times, and that some particular initial distributions $p$ may have a slow start in the relative entropy decay; see Lemma \ref{lem:entprod} below for a concrete example.  

Finally, we remark that the rate of exponential decay $D(\nu)=1-r(\nu)$ in Theorem \ref{prop:tvbo} is optimal, in the sense that $t^{-1}\log \|p_t-\pi\|_\tv$, as $t\to\infty$, cannot be smaller than $-D(\nu)$, see Remark \ref{rem:Dopt}. We refer to \cite{dolera_etal} for a related  result on the optimal rate of decay in the context of Kac model. 

\subsection{Organization of the paper} In Section \ref{sec:clt_chaos} we present the main preliminary facts concerning the local central limit theorem and its applications to the proof of chaos results.
In Section \ref{sec:unifintime} we prove the uniform in time propagation of chaos stated in Theorem \ref{th:unifintime}. This section also contains the proof of Theorem \ref{prop:tvbo}. Section \ref{sec:entdec} is devoted to the proofs of Theorem \ref{th:ent}, Proposition \ref{prop:upbo}, and Theorem \ref{entprodthm}. In the appendix we give the detailed proof of the local central limit theorem statement used in the main text. 

\subsection*{Acknowledgements} We would like to thank Arnaud Guillin, Cyril Labb\'e, Hubert Lacoin and Alistair Sinclair, for insightful conversations related to this work.

\section{Local Central Limit Theorem and Chaos}\label{sec:clt_chaos}
A probability measure $p\in\cP(\O)$ induces a probability $\mu$ on $X:=\{0,1\}^K$ where $K=\sum_{i=1}^n q_{i}$, via the map
\begin{align}\label{induce}
\si\in\O \mapsto \xi_{i,x} = \ind(\si_i=x)\,,\qquad i=1,\dots,n\,;\; x\in \{1,\dots,q_i\}.
\end{align}
That is, $\mu$ is the push forward of $p$ by the above map. 
Note that we did not include the indicator variable $ \ind(\si_i=0)$ since this is uniquely determined as the indicator of the event $\xi_{i,x}=0$ for all $x\in \{1,\dots,q_i\}$. When $q_i=1$ for all $i$, then $\O=\{0,1\}^n$ can be identified with $X$, $\si$ with $\xi$, and $\mu$ with $p$.

\subsection{Central limit theorem} 
The next results are concerned with the behavior of the sum of independent copies $\xi(1),\dots,\xi(N)$ of a random variable $\xi$ with values in $X$ and distribution $\mu\in\cP(X)$:
$$
S_N = \sum_{j=1}^N\xi(j).
$$
Thus, $S_N$ is a random vector in $\{0,\dots,N\}^K$.  We use the notation 
$\scalar{t}{s}=\sum_{i,x}t_{i,x}s_{i,x}$ if $t$ and $s$ are indexed by $i=1,\dots,n$ and $x=1,\dots,q_i$. 
We call $V_1$ the covariance matrix of $\mu$, 
$$V_1(i,x;i',x')=\mu(\xi_{i,x}\xi_{i',x'})-\mu(\xi_{i,x})\mu(\xi_{i',x'}),\qquad i=1,\dots,n\,,\; x=1,\dots,q_i$$
Thus $V_1$ is a symmetric nonnegative definite $K\times K$ matrix. If $\det V_1\neq 0$ we say that $\mu$ is {\em nondegenerate}. The central limit theorem asserts that if $\mu $ is nondegenerate, then as $N\to\infty$ one has the weak convergence 
 \begin{align}\label{eq:CLT}
\frac1{\sqrt N} \;V_1^{-1/2}\left(S_N - \mu^{\otimes N}(S_N)\right)\;
\longrightarrow\; N(0,\ind_K)\,,
\end{align}
where $\mu^{\otimes N}(S_N)\in[0,N]^K$ is the mean of the vector $S_N$ under the product measure $\mu^{\otimes N}$, and $\ind_K$ denotes the $K\times K$ identity matrix, so that $N(0,\ind_K)$ is the standard normal in $K$ dimensions. Note that when $\mu$ is induced by a measure $p\in\cP(\O)$ as described in \eqref{induce}, then $\mu^{\otimes N}(S_N)_{i,x}=N\pi_{i,x}$ for all $i,x$, where $\pi_{i,x}$ are the marginals of $p$. 

The statement \eqref{eq:CLT} clearly requires that $\mu$ is nondegenerate. However, one can obtain similar statements in the case of degenerate measures, provided one reduces to the nondegenerate modes by eliminating the degenerate ones. More precisely, one can take the eigenvectors of $V_1$ with nonzero eigenvalues as the new variables. A simple example is obtained if e.g.\ $q_i\equiv 1$ and $\mu$ gives probability $1/2$ to all $1$'s and  probability $1/2$ to all $0$'s. Here one simply removes all variables but one. 

\subsection{Local  central limit theorem}
We will need a local version of the central limit theorem. For this we assume the following stronger notion of nondegeneracy, which we refer to as irreducibility.

\begin{definition}\label{def:irr}
A measure $\mu\in\cP(X)$ is called {\em irreducible} if for all $i=1,\dots, n$, for all $x\in\{0,\dots,q_i\}$, there exists $\xi\in X$ such that $\mu(\xi)$ and $\mu(\xi^{(i,x)})$ are both positive, where $\xi^{(i,x)}$ denotes the vector $\xi$ with the $(i,x)$-th coordinate flipped, that is $\xi^{(i,x)}_{j,y}=\xi_{j,y}$ for all $(j,y)\neq (i,x)$, and $\xi^{(i,x)}_{i,x}=1-\xi_{i,x}$. 
\end{definition} 
It is immediate to check that if $p\in\cP(\O)$ is irreducible in the sense of Definition \ref{def:irred} then the measure $\mu$ induced on $X$ by $p$ as in \eqref{induce} is irreducible in the sense of Definition \ref{def:irr}.

\begin{proposition}
\label{prop:LCLT}
Suppose $\mu\in\cP(X)$ is irreducible. Then there exists a finite constant $C=C(\mu)$ such that for all $N\in\bbN$, 
 \begin{align}\label{eq:LCLT}
\max_{M_N}\;
\left |\;\mu^{\otimes N}\left(S_N=M_N\right) \;-\; \frac{e^{-\frac12\,\scalar{z_N}{z_N}}}{(2\pi N) ^{K/2}
 \sqrt {\det V_1} 
}\;
\right| \leq \frac{C}{ N^{(K+1)/2}},
\end{align}
where $$z_N:=\frac1{\sqrt N} \;V_1^{-1/2}\left(M_N - \mu^{\otimes N}(S_N)\right),$$ and the maximum is over all possible values $M_N\in\{0,\dots,N\}^K$.
\end{proposition}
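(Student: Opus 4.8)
The plan is to prove Proposition~\ref{prop:LCLT} by the classical characteristic-function method, which is particularly clean here because the finiteness of the alphabet makes $\xi$ bounded, so that all the moments and the smoothness one needs are automatic. Write $\bar\xi=\mu(\xi)$, $Z=\xi-\bar\xi$, $\phi(\theta)=\mu(e^{i\scalar{\theta}{\xi}})$ and $\hat\phi(\theta)=e^{-i\scalar{\theta}{\bar\xi}}\phi(\theta)=\mu(e^{i\scalar{\theta}{Z}})$. Two consequences of irreducibility will drive the argument. First, $V_1=\cov(\xi)$ is positive definite: if $V_1v=0$ then $\scalar{v}{\xi}$ is $\mu$-a.s.\ constant, which is incompatible with $\mu(\xi),\mu(\xi^{(i,x)})>0$ for an index $(i,x)$ with $v_{i,x}\neq0$; hence $\det V_1>0$ and $V_1^{\pm1/2}$ are well defined. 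Second, the differences of points of $\supp\mu$ contain all the $\pm e_{i,x}$ and therefore generate $\bbZ^K$, so $|\phi(\theta)|=1$ for $\theta\in[-\pi,\pi]^K$ happens only at $\theta=0$; by continuity and compactness this gives, for every $\delta>0$, that $\sup\{|\phi(\theta)|:\theta\in[-\pi,\pi]^K,\ |\theta|\geq\delta\}<1$.

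Since $S_N\in\bbZ^K$, the lattice Fourier inversion formula gives $\mu^{\otimes N}(S_N=M_N)=(2\pi)^{-K}\int_{[-\pi,\pi]^K}e^{-i\scalar{\theta}{M_N-N\bar\xi}}\,\hat\phi(\theta)^N\dd\theta$ for every $M_N\in\bbZ^K$, while the Gaussian term in \eqref{eq:LCLT} is the density of the Gaussian law with mean $N\bar\xi$ and covariance matrix $NV_1$ evaluated at $M_N$, which equals $(2\pi)^{-K}\int_{\bbR^K}e^{-i\scalar{\theta}{M_N-N\bar\xi}}\,e^{-\frac N2\scalar{\theta}{V_1\theta}}\dd\theta$. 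Substituting $\theta=t/\sqrt N$ in both integrals, subtracting, and taking absolute values — which kills the $M_N$-dependent unimodular factor and hence produces a bound \emph{uniform in $M_N$} — the proposition reduces to the estimate
\[
\int_{[-\pi\sqrt N,\,\pi\sqrt N]^K}\Big|\hat\phi(t/\sqrt N)^N-e^{-\frac12\scalar{t}{V_1t}}\Big|\dd t\;+\;\int_{\bbR^K\setminus[-\pi\sqrt N,\,\pi\sqrt N]^K}e^{-\frac12\scalar{t}{V_1t}}\dd t\;\leq\;\frac{C}{\sqrt N},
\]
since the left-hand side is $(2\pi)^KN^{K/2}$ times a bound on the modulus of the difference appearing in \eqref{eq:LCLT}.

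To prove the displayed bound I would split the cube at radius $\delta\sqrt N$. On the outer part $\{|t|\geq\delta\sqrt N\}\cap[-\pi\sqrt N,\pi\sqrt N]^K$ one has $t/\sqrt N\in[-\pi,\pi]^K$ with $|t/\sqrt N|\geq\delta$, so $|\hat\phi(t/\sqrt N)^N|=|\phi(t/\sqrt N)|^N\leq(1-c_\delta)^N$ for some $c_\delta>0$; multiplied by the volume $(2\pi\sqrt N)^K$ this is exponentially small in $N$, as is the Gaussian tail integral, so both are negligible against $N^{-1/2}$. On the ball $\{|t|\leq\delta\sqrt N\}$, for $\delta$ small the Taylor expansion of the smooth function $\hat\phi$ at the origin (using $\hat\phi(0)=1$ and $\nabla\hat\phi(0)=0$) gives $\log\hat\phi(\theta)=-\tfrac12\scalar{\theta}{V_1\theta}+R(\theta)$ with $|R(\theta)|\leq C|\theta|^3$, hence $N\log\hat\phi(t/\sqrt N)=-\tfrac12\scalar{t}{V_1t}+NR(t/\sqrt N)$ with $|NR(t/\sqrt N)|\leq C|t|^3/\sqrt N$ and, after shrinking $\delta$ once more, $\mathrm{Re}\,(N\log\hat\phi(t/\sqrt N))\leq-c|t|^2$; the elementary inequality $|e^a-e^b|\leq|a-b|\,e^{\max(\mathrm{Re}\,a,\,\mathrm{Re}\,b)}$ then yields $|\hat\phi(t/\sqrt N)^N-e^{-\frac12\scalar{t}{V_1t}}|\leq C|t|^3N^{-1/2}e^{-c|t|^2}$ on this ball, which integrates to $C'/\sqrt N$ over $\bbR^K$. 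Summing the three contributions gives the claimed bound.

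The one genuinely delicate point is the passage from the crude error $O(N^{-K/2})$ — which a plain central-limit estimate yields for free — to the sharp $O(N^{-(K+1)/2})$ in \eqref{eq:LCLT}: this forces one to control $\hat\phi(t/\sqrt N)^N$ against its Gaussian proxy not merely for bounded $|t|$ but \emph{uniformly over the whole range $|t|\lesssim\sqrt N$}, and it is precisely the cubic remainder bound on $\log\hat\phi$, together with the quadratic decay $\mathrm{Re}\,\log\hat\phi(\theta)\leq-c|\theta|^2$ near the origin, that makes this possible. The other ingredient requiring care, rather than depth, is extracting the aperiodicity bound above from Definition~\ref{def:irr}; everything else is routine real analysis, and — in contrast with the continuous kinetic setting — no moment or regularity hypotheses on $\mu$ enter.
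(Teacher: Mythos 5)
Your proposal is correct and follows essentially the same route as the paper's appendix proof: Fourier inversion of the lattice probability against the Gaussian integral, a split of the frequency domain into a ball of radius of order $\sqrt N$ and its complement, a cubic-error Gaussian approximation of the characteristic function on the inner region, and exponential smallness of $|\phi|^N$ on the outer region, with irreducibility entering exactly where it does in the paper (positive definiteness of $V_1$ and non-degeneracy of $|\phi|$ away from the origin on the torus). The only differences are cosmetic: you use a soft compactness bound $\sup_{|\theta|\geq\delta}|\phi(\theta)|<1$ and a Taylor expansion of $\log\hat\phi$, where the paper proves the explicit quantitative versions (Lemma \ref{charfunc} and Lemma \ref{BerEs}), which is immaterial since the constant is allowed to depend on $\mu$.
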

Noting that $\O_{\r_N}=\{S_N=M_N\}$ with $M_N=N\r_N$, and that in this case $$\scalar{z_N}{z_N} 
=  N \scalar{\r_N-\pi}{V_1^{-1}(\r_N-\pi)},$$ the following is an immediate corollary of Proposition \ref{prop:LCLT}.

\begin{corollary}\label{coro2}
Suppose $\mu\in\cP(X)$ is irreducible, and let $\r_N$ be an admissible sequence  
such that 
\begin{align}\label{hypo1}
\scalar{\r_N-\pi}{\r_N-\pi} =O(1/N),
\end{align}
where $\pi = \mu(\xi)$ is the vector of the expected values of $\mu$.
Then there exists a constant $c=c(\mu)>0$ such that for $N$ sufficiently large
\begin{align}\label{lim1}
\mu^{\otimes N}\left(\Om_{\r_N}\right)\geq \frac{c}{N^{K/2}}.
\end{align}  
In particular, 
\begin{align}\label{loglim}
\lim_{N\to\infty} \frac1N\,\log
\mu^{\otimes N}\left(\Om_{\r_N}\right)= 0.
\end{align}  
\end{corollary}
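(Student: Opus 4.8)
The plan is to read off both assertions directly from the quantitative local central limit theorem of Proposition~\ref{prop:LCLT}, applied with $M_N = N\r_N$, so that $\Om_{\r_N} = \{S_N = M_N\}$. Recall that irreducibility of $\mu$ forces $\det V_1 \neq 0$ (this is already used to even state \eqref{eq:LCLT}), so that $V_1^{-1}$ is a fixed, $N$-independent, positive definite $K\times K$ matrix and the Gaussian density on the right-hand side of \eqref{eq:LCLT} is well defined.

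The first step is to bound the Gaussian exponent uniformly in $N$. With $M_N = N\r_N$ one has, as observed just before the statement,
\begin{align}
\scalar{z_N}{z_N} = N\,\scalar{\r_N - \pi}{V_1^{-1}(\r_N - \pi)} \leq N\,\|V_1^{-1}\|\,\scalar{\r_N - \pi}{\r_N - \pi},
\end{align}
with $\|V_1^{-1}\|$ the operator norm; by hypothesis \eqref{hypo1} the right-hand side is at most a constant $c_1 = c_1(\mu)$ independent of $N$. Hence $e^{-\frac12\scalar{z_N}{z_N}} \geq e^{-c_1/2} =: c_2 > 0$ uniformly in $N$.

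The second step is to plug $M_N = N\r_N$ into \eqref{eq:LCLT}: bounding the maximum there from below by the single term at $M_N = N\r_N$ gives
\begin{align}
\mu^{\otimes N}(\Om_{\r_N}) \geq \frac{1}{N^{K/2}}\left(\frac{c_2}{(2\pi)^{K/2}\sqrt{\det V_1}} - \frac{C}{\sqrt N}\right).
\end{align}
The bracket tends to the strictly positive constant $c_2 (2\pi)^{-K/2}(\det V_1)^{-1/2}$, so for $N$ large it exceeds half of it; taking $c := \tfrac12\,c_2\,(2\pi)^{-K/2}(\det V_1)^{-1/2}$ yields \eqref{lim1}. Finally \eqref{loglim} is immediate: $\tfrac1N\log\mu^{\otimes N}(\Om_{\r_N}) \leq 0$ since $\mu^{\otimes N}(\Om_{\r_N}) \leq 1$, and $\tfrac1N\log\mu^{\otimes N}(\Om_{\r_N}) \geq \tfrac1N\log c - \tfrac{K}{2N}\log N \to 0$ by \eqref{lim1}.

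I do not expect a genuine obstacle: the corollary is a one-line consequence of the quantitative LCLT once the latter is in hand, and the only place where care is needed is the uniform lower bound $e^{-\frac12\scalar{z_N}{z_N}} \geq c_2$ — this is exactly where the quasi-centering assumption \eqref{hypo1} enters, together with the nondegeneracy of $V_1$ guaranteed by irreducibility of $\mu$. The real work sits entirely in Proposition~\ref{prop:LCLT} itself, whose proof is deferred to the appendix.
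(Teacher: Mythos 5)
Your proof is correct and follows exactly the route the paper intends: the corollary is stated there as an immediate consequence of Proposition \ref{prop:LCLT} after noting $\O_{\r_N}=\{S_N=N\r_N\}$ and $\scalar{z_N}{z_N}=N\scalar{\r_N-\pi}{V_1^{-1}(\r_N-\pi)}$, which under \eqref{hypo1} stays bounded — precisely the uniform Gaussian lower bound you isolate. The remaining steps (absorbing the $O(N^{-(K+1)/2})$ error for large $N$ and the trivial derivation of \eqref{loglim} from \eqref{lim1}) are exactly as in the paper's reading of the statement.
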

We note that the condition \eqref{hypo1} corresponds to ``normal" fluctuations $\scalar{z_N}{z_N}  = O(1)$, and that the corollary applies, in particular, to the canonical sequence $\rho_N=\r^\pi$ defined in \eqref{rhoseq}, since $\scalar{\r^\pi-\pi}{\r^\pi-\pi} =O(1/N^2)$ in that case. 

The proof of Proposition \ref{prop:LCLT} will be given in the appendix. Here we pause for some remarks on the assumptions we made, and then discuss the main applications to chaos. 

\begin{lemma}\label{lem:nondeg}
If $\mu$ is irreducible then it is nondegenerate. The converse does not hold.
\end{lemma}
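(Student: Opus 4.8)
The statement has two parts: (i) irreducibility (Definition \ref{def:irr}) implies nondegeneracy, i.e.\ $\det V_1 \neq 0$; and (ii) the converse fails, which we settle with an explicit example. For part (i), the natural route is to show that $V_1$, as the covariance matrix of $\mu$, is strictly positive definite by checking that no nonzero linear combination of the coordinate indicator variables is $\mu$-almost surely constant. Concretely, suppose $\scalar{t}{\xi}$ has zero variance under $\mu$ for some $t = (t_{i,x}) \in \R^K$, $t \neq 0$; then $\scalar{t}{\xi} = c$ for a constant $c$, $\mu$-a.s., i.e.\ on every $\xi \in \supp\mu$. Pick $(i,x)$ with $t_{i,x} \neq 0$. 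By irreducibility there is a $\xi \in X$ with $\mu(\xi) > 0$ and $\mu(\xi^{(i,x)}) > 0$, so both $\xi$ and $\xi^{(i,x)}$ lie in $\supp\mu$. Subtracting the two identities $\scalar{t}{\xi} = c = \scalar{t}{\xi^{(i,x)}}$ and using that $\xi$ and $\xi^{(i,x)}$ differ only in the $(i,x)$ coordinate, we get $t_{i,x}(\xi_{i,x} - \xi^{(i,x)}_{i,x}) = 0$, i.e.\ $\pm t_{i,x} = 0$, contradicting $t_{i,x} \neq 0$. Hence $V_1$ has no nontrivial kernel and $\det V_1 \neq 0$.

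**The counterexample for the converse.** I need a $\mu$ that is nondegenerate but not irreducible. The simplest candidate: take $n=2$, $q_1 = q_2 = 1$ (so $X = \{0,1\}^2$, $K=2$) and let $\mu$ be supported on the two points $\xi = (0,0)$ and $\xi = (1,1)$, each with mass $\tfrac12$. Then $\supp\mu = \{(0,0),(1,1)\}$; for $(i,x) = (1,1)$, flipping the first coordinate of $(0,0)$ gives $(1,0)$ and of $(1,1)$ gives $(0,1)$, neither of which is in $\supp\mu$, so the irreducibility condition fails. On the other hand $V_1 = \tfrac14 \begin{pmatrix} 1 & 1 \\ 1 & 1 \end{pmatrix}$ is degenerate here — so this is \emph{not} a valid counterexample to the converse. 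I must instead take a $\mu$ that is genuinely nondegenerate. Take $n=1$, $q_1 = 2$, so $X = \{0,1\}^2$ corresponds to $\sigma_1 \in \{0,1,2\}$ via $\xi_{1,1} = \ind(\sigma_1 = 1)$, $\xi_{1,2} = \ind(\sigma_1 = 2)$, and $\supp\mu$ consists of the two points $(1,0)$ and $(0,1)$ (i.e.\ $\sigma_1 \in \{1,2\}$), with equal mass $\tfrac12$. Then $\mu(\xi_{1,1}) = \mu(\xi_{1,2}) = \tfrac12$, $\mu(\xi_{1,1}\xi_{1,2}) = 0$, so $V_1 = \tfrac14\begin{pmatrix} 1 & -1 \\ -1 & 1\end{pmatrix}$, still degenerate. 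The obstruction is structural: when $q_i \geq 2$, the indicators $\xi_{i,1},\dots,\xi_{i,q_i}$ always satisfy $\sum_x \xi_{i,x} \leq 1$, and if in addition $\sigma_i = 0$ never occurs the inequality is an equality, forcing degeneracy. So to get nondegeneracy I should include enough support. Let me instead take $n=1$, $q_1 = 1$, $X = \{0,1\}$, $K=1$, and $\mu = \tfrac12\delta_0 + \tfrac12\delta_1$: this is nondegenerate ($V_1 = \tfrac14 > 0$) \emph{and} irreducible — so it is not a counterexample either. The cleanest honest counterexample is: $n = 2$, $q_1 = q_2 = 1$, $\mu$ uniform on the three points $\{(0,0),(1,0),(1,1)\}$; one checks $V_1$ has full rank, but flipping the second coordinate of $(0,0)$ yields $(0,1) \notin \supp\mu$ and flipping it on $(1,0)$ yields $(1,1) \in \supp\mu$ — so for $(i,x) = (2,1)$ the point $(1,0)$ works, and one must check every $(i,x)$ more carefully. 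In fact with support $\{(0,0),(1,0),(1,1)\}$: for $(1,1)$, $(0,0)\leftrightarrow(1,0)$ are both in the support — OK; for $(2,1)$, $(1,0)\leftrightarrow(1,1)$ both in the support — OK. So this $\mu$ \emph{is} irreducible. The right example needs a coordinate that never flips within the support in \emph{both} directions.

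**Resolving the counterexample.** The correct minimal example: $n=2$, $q_1=q_2=1$, $\mu$ uniform on $\{(0,0),(0,1),(1,1)\}$ — no, by symmetry this is the same situation. The real point is that for $\mu$ to be \emph{non}-irreducible there must exist $(i,x)$ such that for no $\xi\in\supp\mu$ is $\xi^{(i,x)}$ also in $\supp\mu$; equivalently the coordinate $\xi_{i,x}$ is a \emph{function} of the remaining coordinates on $\supp\mu$. This still allows nondegeneracy if that function is not affine. Take $n=3$, $q_i\equiv 1$, $X=\{0,1\}^3$, and let $\supp\mu$ be the four points $\{(0,0,0),(1,0,0),(0,1,0),(1,1,1)\}$ with equal mass $\tfrac14$: here $\xi_{3,1} = \xi_{1,1}\xi_{2,1}$ on the support, so flipping coordinate $3$ never lands back in the support — $\mu$ is not irreducible — yet a direct computation shows the $3\times 3$ covariance matrix $V_1$ is nonsingular (the four support points affinely span $\R^3$). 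I would present this as the counterexample, deferring the short rank computation. \emph{The main obstacle} is precisely getting this example right: one must exhibit a support set that affinely spans (giving nondegeneracy) while having some coordinate pinned as a non-affine Boolean function of the others (killing irreducibility), and then verify the small determinant by hand; part (i) itself is a routine one-line argument once set up as above.
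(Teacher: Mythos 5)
Your proof is correct, and part (i) is essentially the paper's argument: the paper likewise argues that degeneracy would force some coordinate $\xi_{i,x}$ to be $\mu$-a.s.\ determined by the remaining coordinates, contradicting the irreducibility condition at $(i,x)$; your version, which picks $t\neq 0$ in the kernel of $V_1$ and a coordinate with $t_{i,x}\neq 0$, is if anything stated more carefully. For the converse, the paper uses a different explicit example: $n=3$, $q_i\equiv 1$, with $\mu$ uniform on the four even-parity configurations $\{000,110,101,011\}$, for which $V_1=\tfrac14\ind_3$ by inspection (each coordinate is Bernoulli$(1/2)$ and pairwise uncorrelated), while flipping any single coordinate changes parity and so leaves the support, killing irreducibility. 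Your final example $\{000,100,010,111\}$, where $\xi_{3}=\xi_{1}\xi_{2}$ on the support, works too: irreducibility fails at $(3,1)$, and the four points affinely span $\bbR^3$ (the differences $(1,0,0),(0,1,0),(1,1,1)$ are independent), so $V_1$ is nonsingular — indeed $\det V_1=1/256$. The only difference is cosmetic: the paper's parity measure makes the nondegeneracy check immediate, whereas yours needs the (easy) affine-span or determinant verification that you deferred; also, your earlier discarded attempts should of course be pruned from a final write-up.
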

\begin{proof}
If $\mu$ is degenerate, then for any fixed $(i,x)$, the variable $\xi_{i,x}$ can be written $\mu$-a.s.\ as a nontrivial linear combination of the other variables $\xi_{j,y}$, $(j,y)\neq (i,x)$. In particular, the value of $\xi_{i,x}$ is $\mu$-a.s.\ determined by the other variables. But this is not possible if $\mu$ is irreducible since by assumption there is always at least one value of all the other variables for which both values $\xi_{i,x}=0,1$ happen with positive $\mu$ probability. This proves the first assertion. To violate the converse, consider the following example: $n=3$, $q_i\equiv1$, so that $X=\{0,1\}^3$ and suppose that $\mu$ gives probability $1/4$ to the following four configurations $101,110,011,000$, and probability 0 to the four remaining configurations. Then one checks that $V_1=\frac14\ind_3$. In particular, $\mu$ is nondegenerate.  However, $\mu$ is not irreducible since the condition in Definition \ref{def:irr} is violated at $i=1$. 
\end{proof}

Let us remark that some irreducibility assumption is necessary for the local CLT statement in Proposition \ref{prop:LCLT}. Consider the same counterexample from the proof of Lemma \ref{lem:nondeg}. In this case one checks easily that if the first component of $S_N$ is even, then the sum of the remaining two components must be even as well. This shows that the event $S_N=M_N$ has probability zero for  many admissible sequences such that Corollary \ref{coro2} 
would predict $\mu^{\otimes N}\left(S_N=M_N\right) >0$. Thus, Proposition \ref{prop:LCLT} does not hold for all nondegenerate $\mu$. The next lemma elucidates the role of the irreducibility assumption.   
\begin{lemma}\label{charfunc}
Suppose $\mu$ is irreducible.  
Then there exists a constant $c=c(\mu)>0$ such that the characteristic function $\psi(t)=\mu(e^{i\scalar{t}{\xi}})$, $t\in\bbR^K$, satisfies 
\begin{align}
|\psi(t)|\leq 
e^{-c
\scalar{t}{t}
}\,,\qquad \text{for all\;\;} 
t\in[-\pi,\pi]^K.
\end{align}
\end{lemma}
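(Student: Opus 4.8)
The plan is to establish the Gaussian-type decay of $|\psi(t)|$ on the torus $[-\pi,\pi]^K$ by combining a crude bound near the origin with a compactness argument away from it, using irreducibility precisely to rule out the existence of bad directions. First I would treat $t$ in a small ball $|t|\leq \d$ around $0$. Here the standard Taylor expansion $|\psi(t)|^2 = 1 - \scalar{t}{V_1 t} + o(\scalar{t}{t})$ is available because $\mu$ is irreducible, hence nondegenerate by Lemma \ref{lem:nondeg}, so $V_1$ has smallest eigenvalue $\lambda_{\min}>0$. Thus for $\d$ small enough $|\psi(t)|^2\leq 1-\tfrac12\lambda_{\min}\scalar{t}{t}\leq e^{-\tfrac12\lambda_{\min}\scalar{t}{t}}$, which gives the claim on $|t|\leq\d$ with $c_1 = \lambda_{\min}/4$.

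Next I would handle the compact region $\cR = \{t\in[-\pi,\pi]^K : |t|\geq \d\}$. On $\cR$ it suffices to show $\sup_{t\in\cR}|\psi(t)| = \rho < 1$; then since $\scalar{t}{t}\leq \pi^2 K$ on the torus, choosing $c_2 = -\log\rho/(\pi^2 K)>0$ gives $|\psi(t)|\leq \rho = e^{-c_2\pi^2 K}\leq e^{-c_2\scalar{t}{t}}$ there. Taking $c=\min(c_1,c_2)$ finishes the proof. Since $\cR$ is compact and $t\mapsto|\psi(t)|$ is continuous, the supremum is attained, so I only need that $|\psi(t)|<1$ for every single $t\in\cR$. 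Suppose $|\psi(t)|=1$ for some $t\neq 0$ in $[-\pi,\pi]^K$. Then $e^{i\scalar{t}{\xi}}$ is $\mu$-a.s.\ equal to a fixed unimodular constant $e^{i\theta}$, i.e.\ $\scalar{t}{\xi}\in\theta + 2\pi\bbZ$ for $\mu$-a.e.\ $\xi$. The key step is to show this forces $t$ to lie outside $[-\pi,\pi]^K\setminus\{0\}$, i.e.\ to derive a contradiction. Pick any coordinate $(i,x)$ with $t_{i,x}\neq 0$. By irreducibility (Definition \ref{def:irr}) there exists $\xi\in X$ with $\mu(\xi)>0$ and $\mu(\xi^{(i,x)})>0$; since $\xi$ and $\xi^{(i,x)}$ differ only in the $(i,x)$-coordinate by $\pm1$, we get $\scalar{t}{\xi}-\scalar{t}{\xi^{(i,x)}} = \pm t_{i,x}$, and both inner products lie in $\theta+2\pi\bbZ$, so $t_{i,x}\in 2\pi\bbZ$. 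As $|t_{i,x}|\leq\pi$ this forces $t_{i,x}=0$, contradicting the choice of the coordinate. Hence no such $t$ exists and $|\psi(t)|<1$ on all of $[-\pi,\pi]^K\setminus\{0\}$, and in particular on $\cR$.

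The main obstacle is the second step: one must know that the only lattice directions along which the characteristic function can have modulus one are multiples of $2\pi$ in each coordinate, and that these are excluded on the torus by the irreducibility hypothesis; the counterexample in Lemma \ref{lem:nondeg} (where $V_1=\tfrac14\ind_3$ but the first coordinate of $\xi$ is congruent mod $2$ to the sum of the other two) shows this is exactly where a merely nondegenerate $\mu$ can fail, since there $t=\pi(1,-1,-1)$ gives $|\psi(t)|=1$. Everything else — the Taylor expansion near the origin and the compactness argument — is routine.
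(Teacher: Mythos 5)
Your proof is correct, but it takes a genuinely different route from the paper's. The paper argues in one shot and quantitatively: writing $|\psi(t)|^2=\sum_{\xi,\xi'}\mu(\xi)\mu(\xi')\cos\scalar{t}{\xi-\xi'}$, it applies the elementary bound $\cos\theta\leq 1-2\theta^2/\pi^2$ (valid for $|\theta|\leq\pi$) only to the flip pairs $\xi'=\xi^{(i,x)}$, for which $\scalar{t}{\xi-\xi'}=\pm t_{i,x}\in[-\pi,\pi]$, and bounds all other pairs trivially by $1$; irreducibility enters through the explicit constant $c=\tfrac1{\pi^2}\inf_{i,x}\sum_{\xi}\mu(\xi)\mu(\xi^{(i,x)})>0$, and the Gaussian form follows from $x\leq e^{(x^2-1)/2}$. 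You instead split the torus: near the origin you use the second-order expansion and nondegeneracy (via Lemma \ref{lem:nondeg}), and away from the origin you use compactness together with the rigidity argument that $|\psi(t)|=1$ forces $\scalar{t}{\xi}$ to be a.s.\ constant mod $2\pi$, whence by irreducibility applied to the flip pairs each nonzero coordinate $t_{i,x}$ would lie in $2\pi\bbZ\cap[-\pi,\pi]=\{0\}$, a contradiction. Both arguments exploit irreducibility through exactly the same pairs $(\xi,\xi^{(i,x)})$, and your identification of the counterexample direction $t=\pi(1,-1,-1)$ for the merely nondegenerate measure is accurate. What the two approaches buy: the paper's computation is shorter, uniform over the whole cube with no case split, and yields an explicit constant; your argument is softer and produces a non-explicit constant through the compactness step, but it isolates more transparently the qualitative role of irreducibility (ruling out modulus-one points of $\psi$ on the punctured torus), and since the lemma is only used downstream through the existence of some $c(\mu)>0$, the loss of explicitness is harmless.
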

\begin{proof}
We write
\begin{align}
|\psi(t)|^2 = \left|\mu\left[e^{i\scalar{t}{\xi}}\right]\right|^2
&=\mu\left[\cos\scalar{t}{\xi}\right]^2 + \mu\left[\sin\scalar{t}{\xi}\right]^2 
=\sum_{\xi,\xi' \in X}\mu(\xi)\mu(\xi')\cos\scalar{t}{\xi-\xi'},
\end{align}
where the last equation uses the identity $\cos(\alpha-\beta) = \sin(\alpha)\sin(\beta) + \cos(\alpha)\cos(\beta).$
If $|\theta|\leq \pi$, then $\cos(\theta)\leq 1-2\theta^2/\pi^2$, and therefore, 
%
\begin{equation}
\cos\scalar{t}{\xi-\xi'} \leq
\begin{cases}
1-\frac{2\scalar{t}{\xi-\xi'}^2}{\pi^2} & \text{if} \ |\xi-\xi'|_1 = 1 \\
1 & \text{if} \ |\xi-\xi'|_1 \neq 1 
\end{cases}
\end{equation}
where $|\xi-\xi'|_1=\sum_{i,x}|\xi_{i,x}-\xi_{i,x}'|$. Since $|\xi-\xi'|_1=1$ iff $\xi'=\xi^{(i,x)}$ for some $i,x$,  
\begin{align}
|\psi(t)|^2
&\leq 1-\frac{2}{\pi^2}\sum_{i,x}\sum_{\xi\in X}\mu(\xi)\mu(\xi^{(i,x)})\,t_{i,x}^2 
\leq 1-2c\scalar{t}{t},
\end{align}
where $$c:=\frac1{\pi^2} \;\inf_{i,x}\sum_{\xi\in X}\mu(\xi)\mu(\xi^{(i,x)})
.$$
The irreducibility of $\mu$ is equivalent to $c>0$. 
Using $x\leq e^{\frac{1}{2}(x^2-1)}$,  $x \in [0,1]$, with $x=|\psi(t)|$, we conclude
\begin{align}
|\psi(t)|\leq e^{-c\,\scalar{t}{t}}.
\end{align}
\end{proof}
We turn to the applications to Kac chaos and entropic chaos. 

\subsection{Kac chaos}
Recall the definition of $\g(p,\r_N)$ and of the canonical tensor product $\g_N(p)$ 
in 
Definition \ref{def:canonical}. 
\begin{theorem}\label{th:eqens}
Suppose $p\in\cP(\O)$ is irreducible and let $\r_N$  be an admissible sequence  
such that 
\begin{align}\label{hypo10}
\scalar{\r_N-\pi}{\r_N-\pi} =O(1/N).
\end{align}
Then for all $k=1,\dots,N$,  
\begin{align}\label{eq:chaos00}
\|P_k\g(p,\r_N)-p^{\otimes k}\|_\tv \leq \frac{C\,k}{\sqrt N},
\end{align}
for some constant  $C=C(p)$. 
Moreover, when $\r_N=\rhoN$, the canonical tensor product $\g_N(p)$ satisfies 
the stronger estimate
\begin{align}\label{eq:chaos000}
\|P_k\g_N(p)-p^{\otimes k}\|_\tv \leq \frac{C\,k}{ N}.
\end{align}
\end{theorem}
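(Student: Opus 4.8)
\emph{Setup.} My plan is to work directly with the density $W:=\frac{dP_k\g(p,\r_N)}{dp^{\otimes k}}$. Since $\g(p,\r_N)=p^{\otimes N}(\,\cdot\,|\,\O_{\r_N})$ is exchangeable and $\O_{\r_N}=\{S_N=M_N\}$ with $M_N=N\r_N$, conditioning on the first $k$ particles yields
\begin{equation}\label{eq:Wform}
W(\si_{[k]})=\frac{\mu^{\otimes(N-k)}\!\big(S_{N-k}=M_N-s(\si_{[k]})\big)}{\mu^{\otimes N}(S_N=M_N)},
\end{equation}
where $s(\si_{[k]})\in\{0,\dots,k\}^K$ is the count vector of $\si_{[k]}$ under the map \eqref{induce}. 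I will use that $\E_{p^{\otimes k}}[W]=1$, so that $\|P_k\g(p,\r_N)-p^{\otimes k}\|_\tv=\tfrac12\E_{p^{\otimes k}}|W-1|$, and that, by Proposition \ref{prop:LCLT} with $N-k$ summands, $\mu^{\otimes(N-k)}(S_{N-k}=M')\le C'(N-k)^{-K/2}$ for all $M'$; combined with the lower bound $\mu^{\otimes N}(S_N=M_N)\ge cN^{-K/2}$ from Corollary \ref{coro2} (whose hypothesis is exactly \eqref{hypo10}) this gives the a priori bound $W\le C_1=C_1(p)$ whenever $k\le N/2$. I may always assume $k\le N/2$, since otherwise $\tfrac{Ck}{\sqrt N}\ge 1$ and there is nothing to prove.

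\emph{Truncation and Gaussian asymptotics.} Next I would restrict to the moderate-deviation set $G=\{\si_{[k]}:\|s(\si_{[k]})-k\pi\|\le R_k\}$ with $R_k:=\sqrt k\,\log N$. By Hoeffding's inequality for the bounded increments of $S_k$, $p^{\otimes k}(G^c)=\Prob_{\mu^{\otimes k}}(\|S_k-k\pi\|>R_k)=o(N^{-1})$, so the a priori bound $W\le C_1$ makes the contribution of $G^c$ to $\E_{p^{\otimes k}}|W-1|$ negligible. On $G$, applying Proposition \ref{prop:LCLT} to both the numerator and the denominator of \eqref{eq:Wform}---using \eqref{hypo10} to ensure the relevant Gaussian densities are $\asymp N^{-K/2}$---gives
\begin{equation}\label{eq:Wgauss}
W(\si_{[k]})=\Big(\tfrac{N}{N-k}\Big)^{K/2}\exp\!\Big(-\tfrac12\Big[\tfrac{\scalar{a-b}{V_1^{-1}(a-b)}}{N-k}-\tfrac{\scalar{a}{V_1^{-1}a}}{N}\Big]\Big)\big(1+\e_N(\si_{[k]})\big),
\end{equation}
with $a:=N(\r_N-\pi)$, $b:=s(\si_{[k]})-k\pi$, and $\sup_{G}|\e_N|=O(N^{-1/2})$.

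\emph{The general bound \eqref{eq:chaos00}.} Write the exponential factor in \eqref{eq:Wgauss} as $e^{-X}$; then $X=X(\si_{[k]})$ is a fixed affine-quadratic function of $b$. Under \eqref{hypo10} one has $\|a\|=O(\sqrt N)$, and a routine moment computation using $\E_{\mu^{\otimes k}}\|b\|^2=O(k)$, $\mathrm{SD}(\|b\|^2)=O(k)$ gives $\E X=O(k/N)$, $\mathrm{SD}(X)=O(\sqrt{k/N})$ (the linear term $-\scalar{a}{V_1^{-1}b}/(N-k)$ being the dominant fluctuation), and $(N/(N-k))^{K/2}-1=O(k/N)$. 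Hence on $G$, $|W-1|\le|X|+O(X^2)+O(k/N)+O(N^{-1/2})$; since $\E_{p^{\otimes k}}|X|\le|\E X|+\mathrm{SD}(X)=O(\sqrt{k/N})$ and $\E_{p^{\otimes k}}X^2=O(k/N)$, this yields $\E_{p^{\otimes k}}|W-1|=O(\sqrt{k/N})$, which is \eqref{eq:chaos00} because $\sqrt{k/N}\le k/\sqrt N$. Note the first-order local CLT error $O(N^{-1/2})$ is harmless here, being dominated by $\sqrt{k/N}\ge N^{-1/2}$.

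\emph{The refined canonical bound \eqref{eq:chaos000}---and the main obstacle.} For $\r_N=\rho^\pi$ the decisive gain is $\|a\|=\|N(\rho^\pi-\pi)\|=O(1)$ instead of $O(\sqrt N)$. Re-running the previous step with this input, the linear part of $X$ drops to $O(\sqrt k/N)$ and the quadratic part $\scalar{b}{V_1^{-1}b}/(2(N-k))$, of order $k/N$, becomes the dominant fluctuation, so $\E_{p^{\otimes k}}|e^{-X}(N/(N-k))^{K/2}-1|=O(k/N)$. The genuine difficulty is that the $O(N^{-1/2})$ relative error $\e_N$ from the \emph{first-order} local CLT would now swamp the target $O(k/N)$. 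To remove it I would invoke the one-term Edgeworth refinement of Proposition \ref{prop:LCLT}---obtainable from the same characteristic-function estimate (Lemma \ref{charfunc}) by expanding $\psi(t/\sqrt N)^N$ one order further---namely $\mu^{\otimes(N-k)}(S_{N-k}=M_N-s)=G_{N-k}(M_N-s)\big(1+\tfrac{Q_1(z(s))}{\sqrt{N-k}}+O(N^{-1})\big)$ with $Q_1$ an odd polynomial and $z(s)=\tfrac1{\sqrt{N-k}}V_1^{-1/2}(a-b)$. Because $\|a\|=O(1)$ we have $\|z(s)\|=O(\sqrt{k/N})$ on $G$, and since $Q_1$ is odd it vanishes at $0$, so the Edgeworth correction is $O(\|z(s)\|/\sqrt N)=O(\sqrt k/N)$, while the analogous correction in the denominator is $O(1/N)$; hence $\sup_G|\e_N|=O(\sqrt k/N)$ in the canonical case, and combining with the $O(k/N)$ bound above gives $\E_{p^{\otimes k}}|W-1|=O(k/N)$, i.e.\ \eqref{eq:chaos000}. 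The subtle point of the whole proof is exactly this passage to second order together with the odd/even cancellations it exploits; an alternative avoiding Edgeworth is to estimate the $\chi^2$-divergence $\E_{p^{\otimes k}}[W^2]-1$ via the convolution identity $\mu^{\otimes N}(S_N=M_N)=\sum_{s'}\mu^{\otimes k}(S_k=s')\,\mu^{\otimes(N-k)}(S_{N-k}=M_N-s')$, so that the numerator and denominator of $W$ are expressed through the same family of point masses and the first-order errors enter only through their differences at nearby arguments.
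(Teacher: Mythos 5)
Your route is genuinely different from the paper's. The paper fixes a test function $f$ on $\O^k$, writes $\g(p,\r_N)(f)-p^{\otimes k}(f)$ through characteristic functions, and bounds the covariance $\mu_N\bigl(f;e^{i\scalar{V_N^{-1/2}s}{\hat S_k}}\bigr)$ by $C|f|_\infty\frac{k}{\sqrt N}\scalar{s}{s}$; crucially, the refined $O(k/N)$ bound is obtained there \emph{without} any second--order local CLT, by replacing $f$ with $f-g$, where $g=\frac1k\scalar{V_1^{-1}v}{\hat S_k}$, $v=\mu_N(f;\hat S_k)$, so that the first--order term cancels exactly and only Proposition \ref{prop:LCLT} is needed. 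You instead work with the density $W$ of $P_k\g(p,\r_N)$ with respect to $p^{\otimes k}$, whose ratio formula in terms of $\mu^{\otimes(N-k)}$ and $\mu^{\otimes N}$ is correct, as are the a priori bound $W\le C_1$ and the moment computations for the Gaussian exponent $X$ (indeed your argument would give the slightly stronger $O(\sqrt{k/N})$ for \eqref{eq:chaos00}). The price of your route is that \eqref{eq:chaos000} hinges on an Edgeworth refinement of Proposition \ref{prop:LCLT} with \emph{relative} error $O(|z|/\sqrt N+1/N)$; this is standard for bounded lattice vectors and can indeed be extracted from Lemma \ref{charfunc} plus one more order of expansion, but it is not in the paper and would have to be proved in full — it is exactly the work the paper's linear--correction trick is designed to avoid. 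Also, with $R_k=\sqrt k\log N$ the correction you quote is really $O(\sqrt k\,\log N/N)$, which exceeds $k/N$ for $k\lesssim(\log N)^2$; there you should use the deterministic bound $|b|\le k\sqrt K$ instead of Hoeffding.

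A second concrete gap: the uniform claims $\sup_G|\e_N|=O(N^{-1/2})$ (and later $O(\sqrt k/N)$) fail when $k$ is of order $N$. On your set $G$ one only has $\scalar{z'}{z'}=O\bigl(k(\log N)^2/N\bigr)$, which can be of order $(\log N)^2$, so the Gaussian main term in the numerator can be as small as $N^{-K/2}e^{-c(\log N)^2}$, i.e.\ below the absolute error $O(N^{-(K+1)/2})$ of Proposition \ref{prop:LCLT}, and the multiplicative form of $W$ with small $\e_N$ breaks down on part of $G$. For \eqref{eq:chaos00} this regime is trivial (restrict to $k\lesssim\sqrt N$, which you should state, since your Taylor expansion of $e^{-X}$ also needs $|X|=O(1)$ on $G$), but for \eqref{eq:chaos000} the window $N/(\log N)^2\lesssim k\le\delta N$ is not trivial. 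The fix is routine but needed: take $R_k\asymp\sqrt{k\log(N/k+e)}$, so that $\scalar{z'}{z'}=O(1)$ on $G$ uniformly in $k\le N/2$ while $p^{\otimes k}(G^c)=O((k/N)^2)$ by Hoeffding, or slice dyadically in $|b|/\sqrt k$ and use $W\le C_1$ on the far slices. The $\chi^2$ alternative you mention at the end is too sketchy to assess. In summary: a viable alternative strategy, but as written it rests on an unproved (if standard) Edgeworth input and on uniformity claims that fail for $k$ comparable to $N$ and must be repaired as above.
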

\begin{proof}
We prove \eqref{eq:chaos00} first, and then show how to obtain \eqref{eq:chaos000}.
Let $\hat \xi_{i,x}(j)=\xi_{i,x}(j)-(\r_N)_{i,x}$.   
We use the shorthand notation $\g_N=\g(p,\r_N)$ and $\mu_N=p^{\otimes N}$. 
For any $f:\O^N\mapsto\bbR$ we have 
\begin{align}\label{eq:chao1}
\g_N(f)-\mu_N(f)=\frac{\mu_N(f(\ind_{\O_{\r_N}}-\mu_N(\O_{\r_N}))}{\mu_N(\O_{\r_N})},
\end{align}
Since $\O_{\r_N}=\{S_N=N\r_N\}$, using the Fourier transform we write 
\begin{align}\label{eq:chao2}
\mu_N(\O_{\r_N})=\frac1{(2\pi)^K}\int_{[-\pi,\pi]^K}d t \,\mu_N\left(e^{i\scalar{t}{\hat S_N}}\right),
\end{align}
where $\hat S_N=\sum_{j=1}^N\hat\xi(j)$.
Set $V_N:=N V_1$. The product structure of $\mu_N$ and the change of variables $s= V_N^{1/2} t=\sqrt N V_1^{1/2} t\,$  imply 
\begin{align}\label{eq:chao3}
\mu_N(\O_{\r_N})=\frac1{B_N(2\pi)^K}\int_{Q_{N,K}}d s \,\mu_N\left(e^{i\scalar{V_N^{-1/2} s}{\hat\xi(1)}}\right)^N,
\end{align}
where $Q_{N,K}=V_N^{1/2}[-\pi,\pi]^K$ and $B_N = \sqrt {\det V_N}=N^{K/2}\sqrt {\det V_1}$. 

In the same way, for any $f=f(\xi(1),\dots,\xi(k))$, we have 
\begin{align}\label{eq:chao4}
\mu_N(f\,\ind_{\O_{\r_N}})=\frac1{B_N(2\pi)^K}\int_{Q_{N,K}}d s \,\mu_N\left(e^{i\scalar{V_N^{-1/2} s}{\hat\xi(1)}}\right)^{N-k}\mu_N\left(f \,e^{i\scalar{V_N^{-1/2} s}{\hat S_k}}\right).
\end{align}
In conclusion, we have
\begin{align}\label{eq:chao5}
\g_N(f)-\mu_N(f)=\frac{\int_{Q_{N,K}}d s \,\psi_N(s)^{N-k}\mu_N\left(f; \,e^{i\scalar{V_N^{-1/2} s}{\hat S_k}}\right)}{
\int_{Q_{N,K}}d s \,\psi_N(s)^{N}\,},
\end{align}
where $$\psi_N(s)=\mu_N\left(e^{i\scalar{V_N^{-1/2} s}{\hat\xi(1)}}\right),$$ and we use the notation $\mu_N(f; g)=\mu_N(fg)-\mu_N(f)\mu_N(g)$ for the covariance of $f,g$. From Corollary \ref{coro2} we known that \eqref{eq:chao3} is at least $cN^{-K/2}$, and thus the denominator in \eqref{eq:chao5} is at least some constant $c'>0$. 
Therefore, it suffices to show that the numerator is bounded by
  \begin{align}\label{eq:chao6}
\int_{Q_{N,K}}d s \,|\psi_N(s)|^{N-k}\left|\mu_N\left(f ;\,e^{i\scalar{V_N^{-1/2} s}{\hat S_k}}\right)\right|\leq C \,|f|_\infty\,\frac{k}{\sqrt N}. 
\end{align}
From Lemma \ref{charfunc} we know that $|\psi_N(s)|\leq e^{-a \scalar{s}{s}/N}$ for some constant $a=a(p)>0$. Notice that we can assume without loss of generality that  $k\leq N/2$, since otherwise the result \eqref{eq:chaos00} is trivial. Thus $|\psi_N(s)|^{N-k} \leq e^{-a \scalar{s}{s}/2}$ and it is sufficient to show that  
  \begin{align}\label{eq:chao7}
\left|\mu_N\left(f; \,e^{i\scalar{V_N^{-1/2} s}{\hat S_k}}\right)\right|\leq C \,|f|_\infty\,\frac{k}{\sqrt N}\,\scalar{s}{s},
\end{align}
for all $s\in Q_{N,K}$. Recalling that $|e^{i\theta}-1|\leq |\theta|$, $\theta\in\bbR$, and using Schwarz' inequality, 
\begin{align}\label{eq:chao71}
|\mu_N\left(f; \,e^{i\scalar{V_N^{-1/2} s}{\hat S_k}}
\right)|
\leq \frac1{\sqrt N} \,|f|_\infty\,
\mu_N\left(\scalar{V_1^{-1/2} s}{\hat S_k}^2\right).
\end{align}
Now we observe that 
\begin{align}\label{eq:chao72}
\mu_N\left(\scalar{V_1^{-1/2} s}{\hat S_k}^2\right) &= k^2\scalar{V_1^{-1/2} s}{\pi-\r_N}^2 + k \scalar{s}{s}\nonumber
\\
& \leq k^2\scalar{s}{V_1^{-1}s}\scalar{\pi-\r_N}{\pi-\r_N} +  k \scalar{s}{s} \leq Ck \scalar{s}{s}, 
\end{align}
where we use $\mu_N(\hat S_k)=k\,(\pi-\r_N)$, the independence of the  $\hat\xi(j)$, and  $\scalar{\pi-\r_N}{\pi-\r_N}\leq C/k$ which follows from the assumption $\scalar{\pi-\r_N}{\pi-\r_N}=O(1/N)$. 
This proves \eqref{eq:chaos00}.

To prove \eqref{eq:chaos000}, note that it is sufficient to prove \eqref{eq:chao7} with $\sqrt N$ replaced by $N$ in the right hand side. For this, we are going to use the fact that $\scalar{\pi-\r_N}{\pi-\r_N}=O(1/N^2)$ when $\r_N=\rhoN$, see \eqref{rhoseq}. 
 Let us first consider the function $\tilde f = f-g$, where 
  \begin{align}\label{eq:chao8}
g(\xi(1),\dots,\xi(k))=\frac1k\scalar{V_1^{-1}v}{\hat S_k} 
,\qquad v = \mu_N(f;\hat S_k).
\end{align}
The function $g$ can be seen as a linear approximation of $f$. Notice that 
  \begin{align}\label{eq:chao9}
\mu_N\left(f-g\, \,;\,\scalar{V_N^{-1/2} s}{\hat S_k}\right)=0.
\end{align}
Indeed, by independence $\mu_N((\hat S_k)_{i,x};(\hat S_k)_{j,y}) = k V_1(i,x;j,y) $, and therefore for all $s$,
\begin{align}\label{eq:chao10}
\mu_N\left(g \,;\,\scalar{V_N^{-1/2} s}{\hat S_k}\right)
= \scalar{v}{V_N^{-1/2} s}=\mu_N\left(f\,;\,\scalar{V_N^{-1/2} s}{\hat S_k}\right).
\end{align}
Recalling that $|e^{i\theta}-1-i\theta|=|R(\theta)|\leq \frac12\theta^2$, $\theta\in\bbR$, from \eqref{eq:chao9} we have
\begin{align}\label{eq:chao71x}
&\left|\mu_N\left(f-g\,; \,e^{i\scalar{V_N^{-1/2} s}{\hat S_k}}
\right)\right|^2
 \leq \frac1{N^2}\,\left(\var(f) + \var(g)\right)\mu_N\left( \scalar{V_1^{-1/2} s}{\hat S_k}^4\right)\,,
\end{align}
where we use the inequality $|\mu_N(f-g;R)|^2\leq 2\left(\var(f) + \var(g)\right)\mu_N(R^2)$, and we use $\var$ for the variance w.r.t.\ $\mu_N$.  Next, observe that $\var(f)\leq |f|_\infty^2$ and 
$$
\var(g) = \frac1k \scalar{V^{-1}v}{v} \leq C \var(f) \leq C |f|_\infty^2.
$$ 
We are going to show that 
\begin{align}\label{eq:chao7a7}
\mu_N\left( \scalar{V_1^{-1/2} s}{\hat S_k}^4\right)
\leq C k^2\scalar{s}{s}^2\,.
\end{align}
Suppose for a moment that \eqref{eq:chao7a7} holds. Then we conclude that 
 \begin{align}\label{eq:chao7a}
\left|\mu_N\left(f-g\,; \,e^{i\scalar{V_N^{-1/2} s}{\hat S_k}}\right)\right|\leq C \,|f|_\infty\,\frac{k}{ N}\,\scalar{s}{s}.
\end{align}
%
This proves $|\g_N(\tilde f)-\mu_N(\tilde f)|\leq  C \,|f|_\infty\,k/N$. 
However, noting that $\g_N(g)=0$, we have
\begin{align}\label{eq:chaoa1}
|\g_N(f)-\mu_N(f)|\leq |\g_N(\tilde f)-\mu_N(\tilde f)|+|\mu_N(g)|.
\end{align}
The desired conclusion $|\g_N(f)-\mu_N(f)|\leq  C \,|f|_\infty\,k/N$ then follows from 
\begin{align}\label{eq:chaoap1}
|\mu_N(g)|\leq  |\scalar{V_1^{-1}v}{\pi-\rhoN}|\leq \scalar{V_1^{-1}v}{V_1^{-1}v}^{1/2}\scalar{\pi-\rhoN}{\pi-\rhoN}^{1/2} \leq \frac{C|f|_\infty k}{N},
\end{align}
where we use $\scalar{\pi-\rhoN}{\pi-\rhoN}=O(1/N^2)$, and $\scalar{V_1^{-1}v}{V_1^{-1}v}\leq Ck|f|_\infty$. 

Thus, it remains to prove \eqref{eq:chao7a7}. Notice that it is sufficient to prove 
\begin{align}\label{eq:chao7a8}
\mu_N\left( \scalar{\hat S_k}{\hat S_k}^2\right)
\leq C k^2\,.
\end{align}
We write $\tilde \xi 
= \hat \xi - \mu(\hat \xi)$ and $\tilde S_k=\hat S_k - \mu_N(\hat S_k)$ for the corresponding sums. 
Then one checks that 
  \begin{align}\label{eq:chao7a9}
 &\scalar{\hat S_k}{\hat S_k}^2 \leq \scalar{\tilde S_k}{\tilde S_k}^2 + Ck\scalar{\tilde S_k}{\tilde S_k}^{3/2}\scalar{\pi-\rhoN}{\pi-\rhoN}^{1/2}+ Ck^2\scalar{\tilde S_k}{\tilde S_k}\scalar{\pi-\rhoN}{\pi-\rhoN}   \nonumber \\
 & \qquad \qquad \qquad + Ck^3\scalar{\tilde S_k}{\tilde S_k}^{1/2}\scalar{\pi-\rhoN}{\pi-\rhoN}^{3/2} + Ck^4\scalar{\pi-\rhoN}{\pi-\rhoN}^2\,.
\end{align}
Since $\scalar{\pi-\rhoN}{\pi-\rhoN}=O(1/N^2)$ we can restrict to prove \eqref{eq:chao7a8} for $\tilde S_k$ instead of $\hat S_k$. Since $\tilde \xi$ are centered the estimate follows easily by expanding $\scalar{\tilde S_k}{\tilde S_k}^2$ and observing that the dominant terms are of the form $\scalar{\tilde\xi(i)}{\tilde\xi(j)}^2$, and their  contribution is of order $k^2$.  \end{proof}

We notice that the trick of replacing $f$ by $f-g$ in the proof of Theorem \ref{th:eqens} allowed us to obtain the decay rate $O(1/ N)$ instead of $O(1/ \sqrt N)$. This idea was used in \cite{asymm} for the proof of a related ``equivalence of ensembles" result. 

\begin{corollary}\label{coro3} 
For any irreducible $p\in\cP(\O)$, if $\eta\in\O^N$ is distributed according to the canonical tensor product $\g_N(p)$, letting $\l_{\eta}=\frac1N\sum_{j=1}^N\d_{\eta(j)}$ denote the corresponding empirical measure, 
\begin{align}\label{eq:empit}
\bbE\Big[\|p-\lambda_{\eta}\|_{\rm TV}\Big] \leq \frac{C}{\sqrt {N}},
\end{align}
for some constant $C=C(p)$. Moreover, the same estimate holds with $\sqrt N$ replaced by $N^{1/4}$ if $\eta $ is distributed according to $\g(p,\r_N)$, for any admissible sequence $\r_N$ satisfying \eqref{hypo1}. 
\end{corollary}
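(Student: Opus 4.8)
The plan is to reduce the total variation distance to a finite sum of one-dimensional deviations and then to estimate each of these using only the $k=1$ and $k=2$ chaos bounds already established in Theorem~\ref{th:eqens}. Since $\O$ is a fixed finite set, I would start from the identity $\|p-\lambda_\eta\|_\tv=\tfrac12\sum_{\si\in\O}|\lambda_\eta(\si)-p(\si)|$, where $\lambda_\eta(\si)=\tfrac1N\sum_{j=1}^N\ind(\eta(j)=\si)$. By linearity of expectation and the Cauchy--Schwarz inequality it then suffices to prove that, for each fixed $\si\in\O$, one has $\bbE[(\lambda_\eta(\si)-p(\si))^2]\leq C/N$ when $\eta\sim\g_N(p)$ (and $\bbE[(\lambda_\eta(\si)-p(\si))^2]\leq C/\sqrt N$ when $\eta\sim\g(p,\r_N)$); summing the resulting bounds over the $|\O|$ values of $\si$ then gives \eqref{eq:empit} and its variant with $N^{1/4}$.

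For the second moment I would expand $\lambda_\eta(\si)^2$ into its $N$ diagonal and $N(N-1)$ off-diagonal terms. The key observation is that both $\g_N(p)=p^{\otimes N}(\,\cdot\,|\,\Om_{\rhoN})$ and $\g(p,\r_N)=p^{\otimes N}(\,\cdot\,|\,\Om_{\r_N})$ are invariant under permutations of the $N$ particles, since the conditioning events are; hence all diagonal terms equal $q_1:=P_1\g_N(p)(\{\si\})$ and all off-diagonal terms equal $q_2:=P_2\g_N(p)(\{(\si,\si)\})$, giving $\bbE[\lambda_\eta(\si)^2]=q_1/N+\tfrac{N-1}{N}q_2$ and $\bbE[\lambda_\eta(\si)]=q_1$. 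Subtracting $2p(\si)\bbE[\lambda_\eta(\si)]+p(\si)^2$ and invoking \eqref{eq:chaos000} with $k=1$ and $k=2$, which give $|q_1-p(\si)|\leq C/N$ and $|q_2-p(\si)^2|\leq C/N$, one obtains $\bbE[(\lambda_\eta(\si)-p(\si))^2]\leq C/N$, and the reduction of the previous paragraph then yields \eqref{eq:empit}.

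For a general admissible sequence $\r_N$ satisfying \eqref{hypo1}, the argument is identical except that only the weaker bound \eqref{eq:chaos00} is available, so that $|q_1-p(\si)|\leq C/\sqrt N$ and $|q_2-p(\si)^2|\leq C/\sqrt N$; this still gives $\bbE[(\lambda_\eta(\si)-p(\si))^2]\leq C/\sqrt N$ and hence $\bbE[|\lambda_\eta(\si)-p(\si)|]\leq C N^{-1/4}$ after Cauchy--Schwarz, proving the stated bound with $N^{1/4}$ in place of $\sqrt N$. I do not expect a genuine obstacle here: the only point that must be handled with care is the permutation invariance of the conditioned measures, since it is precisely this that collapses the whole estimate onto the first two particle marginals and reduces everything to the two-marginal bounds of Theorem~\ref{th:eqens}.
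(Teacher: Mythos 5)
Your proof is correct and follows essentially the same route as the paper: both reduce $\bbE\big[\|p-\lambda_{\eta}\|_{\rm TV}\big]$, via Cauchy--Schwarz, to the second moment of $\lambda_{\eta}(\si)-p(\si)$, which by exchangeability of the conditioned measure involves only the one- and two-particle marginals controlled by Theorem \ref{th:eqens}. The only cosmetic difference is that the paper applies Cauchy--Schwarz with weight $p(\si)$ and tests Theorem \ref{th:eqens} against the function $f=\ind_{\eta(1)=\eta(2)}/p(\eta(1))$, whereas you bound each $\si$-term separately using the marginal total-variation estimates for $k=1,2$.
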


\begin{proof}
We write
\begin{align}\label{eq:empit3}
\bbE\left[\|p-\lambda_{\eta}\|_{\rm TV}\right] & = \frac12\sum_{\si\in\O}\bbE\left[|\lambda_{\eta}(\si)-p(\si)|\right]\nonumber\\&
\leq \frac12\left(\sum_{\si\in\O}p(\si)
\bbE\left[(h_\eta(\si)-1)^2\right]\right)^{1/2}\,,
\end{align}
where  
$
h_\eta(\si) =\frac{\lambda_{\eta}(\si)}{p(\si)}
$ and we have used Schwarz' inequality for the product measure $p\times \bbE$.
Now,
\begin{align}\label{eq:empit1}
\sum_{\si}p(\si)
\bbE\left[(h_\eta(\si)-1)^2\right]
& = -1 + \frac1N
\,\bbE\left[\frac{1 }{p(\eta(1))}\right] 
+ \frac{N(N-1)}{N^2}
\,
\bbE\left[\frac{\ind_{\eta(1)=\eta(2)} }{p(\eta(1))}\right] ,
\end{align}
where we use 
$$
\bbE\left[\frac{1 }{p(\eta(1))}\right]=\sum_\si \frac{\bbP(\eta(1)=\si)}{p(\si)} \,,
\quad \bbE\left[\frac{\ind_{\eta(1)=\eta(2)} }{p(\eta(1))}\right]  = \sum_\si \frac{\bbP(\eta(1)=\eta(2)=\si)}{p(\si)}\,. 
$$
Since
$\bbE\left[1/p(\eta(1))\right]\leq 1/p_*$,
where $p_*=\min_{\si:\, p(\si)>0}p(\si)$ we see that the second term in \eqref{eq:empit1} is bounded by $1/(p_*N)$. 
Next, consider the function 
$f(\eta(1),\eta(2))=\ind_{\eta(1)=\eta(2)}/p(\eta(1))$. Note that 
$p^{\otimes 2}(f) = 1$. Then, Theorem \ref{th:eqens}  implies 
$$
\left|-1+\sum_\si\frac{\bbP(\eta(1)=\eta(2)=\si) }{p(\si)}\right|\leq \frac{C}{p_* N}.
$$  
Since $N(N-1)/N^2=1-1/N$, we have shown that \eqref{eq:empit1} is bounded by $C/N$
for some new constant $C$ depending only on $p$. Together with \eqref{eq:empit3} this concludes the proof of \eqref{eq:empit}. Finally, if instead $\eta $ is distributed according to $\g(p,\r_N)$, for an arbitrary admissible sequence $\r_N$ satisfying \eqref{hypo1} we may repeat all the steps above and use the first part of Theorem \ref{th:eqens} to conclude that \eqref{eq:empit1} this time is bounded by $C/\sqrt N$, which implies the claimed bound with $N^{1/4}$ in place of $\sqrt N$.
\end{proof}

\subsection{Entropic chaos and Fisher chaos}
The next result shows how to use the local CLT to obtain convergence of the relative entropy of tensor products. Following \cite{entfound} we refer to this as entropic chaos. 
\begin{proposition}\label{th:entchaos}
Suppose $p\in\cP(\O)$ is irreducible and let $\r_N$  be an admissible sequence  
such that 
\begin{align}\label{hypoa1}
\scalar{\r_N-\pi}{\r_N-\pi} =O(1/N).
\end{align}
Then
\begin{align}\label{eq:entchaos}
\lim_{N\to\infty}
\frac1N\,H_N(\g(p,\r_N)\tc\g(\pi,\r_N))=H(p\tc \pi).
\end{align}
\end{proposition}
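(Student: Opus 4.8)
The plan is to write the relative entropy $H_N(\g(p,\r_N)\tc\g(\pi,\r_N))$ explicitly, exploit the fact that $\g(\pi,\r_N)$ is the uniform measure on $\O_{\r_N}$, and then use the local CLT (Proposition \ref{prop:LCLT} and Corollary \ref{coro2}) to control the normalizing constants. Concretely, for $\eta\in\O_{\r_N}$ one has $\g(p,\r_N)(\eta) = p^{\otimes N}(\eta)/\mu_N(\O_{\r_N})$ with $\mu_N=p^{\otimes N}$, while $\g(\pi,\r_N)(\eta) = 1/|\O_{\r_N}|$ and $\pi^{\otimes N}(\eta)=\pi^{\otimes N}(\O_{\r_N})/|\O_{\r_N}|$ is constant on $\O_{\r_N}$. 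Hence
\begin{align}\label{eq:entcomp}
H_N(\g(p,\r_N)\tc\g(\pi,\r_N)) = \sum_{\eta\in\O_{\r_N}}\frac{p^{\otimes N}(\eta)}{\mu_N(\O_{\r_N})}\log\frac{p^{\otimes N}(\eta)\,|\O_{\r_N}|}{\mu_N(\O_{\r_N})}.
\end{align}
Now $p^{\otimes N}(\eta)=\prod_{i,x}\pi_{i,x}^{N(\lambda_\eta)_{i,x}}\cdots$ — more precisely, writing $\eta$ in terms of the letter-count profile, $\log p^{\otimes N}(\eta) = \sum_{j=1}^N\log p(\eta(j))$, and on $\O_{\r_N}$ the empirical single-site frequencies are fixed by $\r_N$, but $p^{\otimes N}(\eta)$ itself is \emph{not} constant on $\O_{\r_N}$ unless $p$ is a product measure. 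So I split the logarithm: $\log p^{\otimes N}(\eta) = \log(\pi^{\otimes N}(\eta)) + \log\frac{p^{\otimes N}(\eta)}{\pi^{\otimes N}(\eta)}$, where $\pi=\otimes_i\pi_i$ is the product of the marginals. The first piece is constant on $\O_{\r_N}$ and contributes, together with the $|\O_{\r_N}|$ factor, the term $\log\frac{\pi^{\otimes N}(\O_{\r_N})}{\mu_N(\O_{\r_N})}$, which by Corollary \ref{coro2} applied to both $p$ (irreducible by assumption) and $\pi$ (always irreducible) is $O(\log N)$ in absolute value, hence $o(N)$. The second piece gives $\E_{\g(p,\r_N)}\big[\sum_{j=1}^N\log\frac{p(\eta(j))}{\pi(\eta(j))}\big]$.

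The main term is therefore $\E_{\g(p,\r_N)}\big[\sum_{j=1}^N g(\eta(j))\big]$ with $g(\si):=\log\frac{p(\si)}{\pi(\si)}$, a bounded function on $\O$ (using $p$ irreducible, hence $p$ and $\pi$ have the same support and are bounded below there). By symmetry of $\g(p,\r_N)$ this equals $N\,\E_{\g(p,\r_N)}[g(\eta(1))] = N\,P_1\g(p,\r_N)(g)$. By the Kac chaos estimate \eqref{eq:chaos00} in Theorem \ref{th:eqens}, $\|P_1\g(p,\r_N)-p\|_\tv \leq C/\sqrt N \to 0$, so $P_1\g(p,\r_N)(g)\to p(g) = \sum_\si p(\si)\log\frac{p(\si)}{\pi(\si)} = H(p\tc\pi)$. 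Dividing \eqref{eq:entcomp} by $N$ and collecting the pieces yields $\frac1N H_N(\g(p,\r_N)\tc\g(\pi,\r_N)) = P_1\g(p,\r_N)(g) + O\big(\tfrac{\log N}{N}\big) \to H(p\tc\pi)$, which is the claim.

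The step I expect to be the genuine obstacle is controlling the ratio of normalizing constants $\log\frac{\pi^{\otimes N}(\O_{\r_N})}{p^{\otimes N}(\O_{\r_N})}$ and showing it is $o(N)$: this is exactly where irreducibility of $p$ is essential, and it is supplied by Corollary \ref{coro2}, which gives $\pi^{\otimes N}(\O_{\r_N})\geq cN^{-K/2}$ and likewise $p^{\otimes N}(\O_{\r_N})\geq c'N^{-K/2}$, so both logarithms are $-\tfrac K2\log N + O(1)$ and their difference is $O(\log N)$. One should double-check that Corollary \ref{coro2} applies to $p$: its hypothesis \eqref{hypo1} is precisely \eqref{hypoa1}, and the marginals of the measure $\mu$ induced by $p$ are $\pi$, so the centering vector matches. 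The remaining manipulations — rewriting the entropy, the boundedness of $g$, and the passage to the $1$-particle marginal — are routine. A minor point to handle carefully is that $\pi^{\otimes N}(\eta)$ should be verified to be genuinely constant over $\O_{\r_N}$: indeed $\pi^{\otimes N}(\eta)=\prod_{i\in[n]}\prod_{x\in X_i}\pi_{i,x}^{\,\#\{j:\eta_i(j)=x\}}$ and the exponents $\#\{j:\eta_i(j)=x\}=N(\r_N)_{i,x}$ are fixed on $\O_{\r_N}$, which confirms the split above.
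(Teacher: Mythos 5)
Your proof is correct and follows essentially the same route as the paper's: the exact decomposition of $H_N(\g(p,\r_N)\tc\g(\pi,\r_N))$ into $\g(p,\r_N)\big[\log(p^{\otimes N}/\pi^{\otimes N})\big]$ plus the log-ratio of the normalizing constants, with Corollary \ref{coro2} (applied to both $p$ and $\pi$) making the latter $o(N)$, and symmetry plus the $k=1$ case of Theorem \ref{th:eqens} giving $N\,P_1\g(p,\r_N)\big[\log(p/\pi)\big]\to N\,H(p\tc\pi)$ for the former. One minor inaccuracy: irreducibility of $p$ does not imply that $p$ and $\pi$ have the same support (e.g.\ on $\{0,1\}^2$ a $p$ charging only $00,01,10$ is irreducible yet vanishes at $11$ where $\pi>0$), but this is harmless since both $P_1\g(p,\r_N)$ and $p$ are carried by the support of $p$, on which $\log(p/\pi)$ is bounded.
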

\begin{proof}
We write
\begin{align}\label{eq:entchaos2}
H_N(\g(p,\r_N)\tc\g(\pi,\r_N))=\g(p,\r_N)\left[\log\left(\frac{p^{\otimes N}}{\pi^{\otimes N}}\right)\right]
+ \log\left(\frac{\pi^{\otimes N}(\O_{\r_N})}{p^{\otimes N}(\O_{\r_N})}\right).
\end{align}
From Corollary \ref{coro2} we obtain
\begin{align}\label{eq:entchaos3}
\lim_{N\to\infty}
\frac1N\, \log\left(\frac{\pi^{\otimes N}(\O_{\r_N})}{p^{\otimes N}(\O_{\r_N})}\right)=0.
 \end{align}
By symmetry, the first term in \eqref{eq:entchaos2} equals 
$$
N\,P_1\g(p,\r_N)\left[\log\left(\frac{p}{\pi}\right)\right].
$$
Therefore the result follows from Theorem \ref{th:eqens}.
\end{proof}
Another consequence of the local CLT is the following upper semi-continuity property, see \cite{entfound} for a similar statement in the kinetic setting.
\begin{proposition}\label{th:entUPSC}
For each $N,$ let $\mu^{(N)}$ be a symmetric probability  on $\Om_{\r_N}$ and let $\mu_k$ be a probability on $\Om^k$ such that $$P_k\mu^{(N)} \longrightarrow \mu_k$$ weakly for some integer $k$. 
Then, for any admissible sequence satisfying \eqref{hypo1}, 
\begin{align}
\frac{H(\mu_k\tc\pi^{\otimes k})}{k} \leq \liminf_{N \to \infty}\frac{H_N(\mu^{(N)}|\gamma(\pi,\r_N))}{N}.
\end{align}
\end{proposition}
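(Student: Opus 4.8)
The plan is to obtain the bound from the standard \emph{subadditivity-plus-lower-semicontinuity} scheme, after first rewriting everything in terms of the product reference measure $\pi^{\otimes N}$ rather than $\gamma(\pi,\r_N)$. Since $\mu^{(N)}$ is supported on $\Om_{\r_N}$ and $\gamma(\pi,\r_N)=\pi^{\otimes N}(\cdot\tc\Om_{\r_N})$ is precisely the uniform measure on $\Om_{\r_N}$, the density $\mathrm{d}\gamma(\pi,\r_N)/\mathrm{d}\pi^{\otimes N}$ equals the constant $\pi^{\otimes N}(\Om_{\r_N})^{-1}$ on the support of $\mu^{(N)}$, so that
\begin{align}
H_N(\mu^{(N)}\tc\pi^{\otimes N})=H_N(\mu^{(N)}\tc\gamma(\pi,\r_N))-\log\pi^{\otimes N}(\Om_{\r_N}).
\end{align}
Because $\pi$ is irreducible and $\r_N$ satisfies \eqref{hypo1}, Corollary \ref{coro2} yields $N^{-1}\log\pi^{\otimes N}(\Om_{\r_N})\to 0$, hence the two normalized entropies have the same $\liminf$ as $N\to\infty$. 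This is the only analytic input specific to our setting; the rest of the argument is soft.

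Next I would invoke the superadditivity of relative entropy with respect to a product reference measure: for a symmetric $\mu^{(N)}$ and $j+l\leq N$ one has $H(P_{j+l}\mu^{(N)}\tc\pi^{\otimes(j+l)})\geq H(P_j\mu^{(N)}\tc\pi^{\otimes j})+H(P_l\mu^{(N)}\tc\pi^{\otimes l})$, where $P_m\mu^{(N)}$ denotes the $m$-particle marginal (which is independent of the chosen coordinates by symmetry). Splitting $[N]$ into $q=\lfloor N/k\rfloor$ disjoint blocks of size $k$ plus a remainder, and using $H\geq 0$ together with the equality of all $k$-marginals, this gives $H_N(\mu^{(N)}\tc\pi^{\otimes N})\geq q\,H(P_k\mu^{(N)}\tc\pi^{\otimes k})$, and since $qk\geq N-k$,
\begin{align}
\frac1k\,H(P_k\mu^{(N)}\tc\pi^{\otimes k})\leq \frac{N}{N-k}\cdot\frac1N\,H_N(\mu^{(N)}\tc\pi^{\otimes N}),\qquad N>k.
\end{align}

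Finally, since $\O^k$ is finite and $\pi^{\otimes k}$ is everywhere positive, the map $\mu\mapsto H(\mu\tc\pi^{\otimes k})$ is continuous on $\cP(\O^k)$, so the weak convergence $P_k\mu^{(N)}\to\mu_k$ forces $H(P_k\mu^{(N)}\tc\pi^{\otimes k})\to H(\mu_k\tc\pi^{\otimes k})$. Letting $N\to\infty$ in the last display and using $N/(N-k)\to1$ together with the first step, I would conclude
\begin{align}
\frac{H(\mu_k\tc\pi^{\otimes k})}{k}\leq \liminf_{N\to\infty}\frac1N\,H_N(\mu^{(N)}\tc\pi^{\otimes N})=\liminf_{N\to\infty}\frac{H_N(\mu^{(N)}\tc\gamma(\pi,\r_N))}{N},
\end{align}
which is the assertion. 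I do not expect a genuine obstacle here: this is the discrete counterpart of the entropic upper semicontinuity lemma of \cite{entfound}, and the only place where something concrete must be checked is the replacement of $\gamma(\pi,\r_N)$ by $\pi^{\otimes N}$ in the first step, which is exactly the purpose of Corollary \ref{coro2}.
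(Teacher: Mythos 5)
Your proposal is correct, and it follows the paper's overall scheme (change of reference measure from $\gamma(\pi,\r_N)$ to $\pi^{\otimes N}$ via Corollary \ref{coro2}, a subadditivity-type entropy estimate for the $k$-particle marginal, then weak convergence plus continuity of relative entropy on the finite space), but the key entropy inequality you use is different from the paper's. The paper applies Shearer's inequality (Lemma \ref{lem:shearer}) with the family of \emph{all} $k$-element subsets of $[N]$, which together with the symmetry of $\mu^{(N)}$ yields the exact bound $H(P_k\mu^{(N)}\tc\pi^{\otimes k})\leq \frac{k}{N}\,H_N(\mu^{(N)}\tc\pi^{\otimes N})$, see \eqref{eq:lhs}. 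You instead partition $[N]$ into $\lfloor N/k\rfloor$ disjoint blocks and use superadditivity of relative entropy with respect to a product reference measure (equivalently, nonnegativity of mutual information), which gives the slightly weaker prefactor $\frac{N}{N-k}\cdot\frac{k}{N}$; this is entirely sufficient for the asymptotic statement and has the merit of needing nothing beyond the chain rule, whereas Shearer's inequality exploits the symmetry over overlapping $k$-subsets and buys the clean non-asymptotic constant $k/N$. The remaining steps — the identity $H_N(\mu^{(N)}\tc\pi^{\otimes N})=H_N(\mu^{(N)}\tc\gamma(\pi,\r_N))-\log\pi^{\otimes N}(\Om_{\r_N})$, the use of \eqref{loglim} under \eqref{hypo1}, and the continuity of $\mu\mapsto H(\mu\tc\pi^{\otimes k})$ on $\cP(\O^k)$ (valid since $\pi$ has full support) — coincide with the paper's argument, so there is no gap.
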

\begin{proof}
From Shearer inequality one has
\begin{align}
\sum_{A\in\cA}H\left(P_A\mu^{(N)}|P_A\pi^{\otimes N}\right) \leq n_+(\cA)H\left(\mu^{(N)}\tc\pi^{\otimes N}\right),
\end{align}
where $\cA$ is any family of sets covering $[N]=\{1,\dots,N\}$, $n_+(\cA) = \max_{j} \#\{A\in\cA:\,A\ni j\}$, and $P_A\nu$ denotes the marginal on the variables $\{\eta(j),\,j\in A\}$ of a probability $\nu\in\cP(\O^N)$, see Lemma \ref{lem:shearer} below for more details. 
If we take $\cA=\{A\subset [N]:\,|A|=k\}$, then $n_+(\cA) =\binom{N-1}{k-1}$. Moreover, by symmetry 
$P_A\mu^{(N)}=P_k\mu^{(N)}$ for all $A\in\cA$.  Since $\binom{N}{k}/\binom{N-1}{k-1}=N/k$, this proves 
\begin{align}\label{eq:lhs}
H\left(P_k\mu^{(N)}\tc\pi^{\otimes k}\right) \leq \frac{k}N\,H\left(\mu^{(N)}\tc\pi^{\otimes N}\right).
\end{align}
See also Lemma 3.9 in \cite{miclo}, where \eqref{eq:lhs} was derived with $\frac{k}{N}$ replaced by $\frac{k}{N}(1+O(\frac{k}{N}))$  in the right hand side. 
On the other hand, 
\begin{align}\label{eq:entchaos4}
H_N\left(\mu^{(N)}\tc\pi^{\otimes N}\right)
= H_N(\mu^{(N)}\tc\g(\pi,\r_N))
- \log\left(\pi^{\otimes N}(\O_{\r_N})\right).
\end{align}
The left hand side of \eqref{eq:lhs} converges by assumption to $H\left(\mu_k\tc\pi^{\otimes k}\right)$. The desired conclusion then follows from Corollary \ref{coro2}.
 \end{proof}

\begin{remark}\label{rem:anyp}
Both Proposition \ref{th:entchaos} and Proposition \ref{th:entUPSC} hold with $\pi$ replaced by any irreducible $p'\in\cP(\O)$ with the same marginals as $\pi$.
\end{remark}

One can also establish the following analogue of the Fisher chaos property discussed in \cite{onkacschaos}.
Observe that if $\mu_{N,t}=\mu_N e^{t\cL_N}$, for some $\mu_N\in\cP(\O_{\r_N})$, then
\begin{align}\label{eq:fish1}
\frac{d}{dt}\, H\left(\mu_{N,t}|\g(\pi,\r_N)\right)_{\vert_{t=0^+}} = -D_N(f_N)\,,
\end{align}
where $f_N:=\mu_N/\g(\pi,\r_N)$, and 
\begin{align}\label{eq:fish2}
D_N(f_N) = \frac1{2N}\sum_{j<l}\sum_A\nu(A)\,\g(\pi,\r_N)\left[(f_N^{j,l,A}-f_N)\log\frac{f_N^{j,l,A}}{f_N}\right]\,.
\end{align}
Here we use the notation $f_N^{j,l,A}(\eta)=f_N(\eta^{j,l,A})$, where $\eta^{j,l,A}$ is defined in \eqref{def:etajl}.
On the other hand, for the nonlinear equation \eqref{boltzeq} one has 
\begin{align}\label{eq:fish3}
\frac{d}{dt}\, H\left(p_t\tc\pi\right)_{\vert_{t=0^+}} = -D_\pi(f)\,,
\end{align}
where $f:=p/\pi$, and 
\begin{align}\label{eq:fish4}
D_\pi(f) =\frac12 \sum_A\nu(A)\,\pi\left[(f^Af^{A^c}-f)\log\frac{f^Af^{A^c}}{f}\right]\,,
\end{align}
where $f^A(\si)=p_A(\si)/\pi_A(\si)$ is the density of the marginal of $p$ on $A$ with respect to $\pi$.

\begin{proposition}\label{dff2tyhm}
Under the same assumptions of Proposition \ref{th:entchaos},   
\begin{equation}\label{dff2}
\lim_{N\to\infty}\frac{D_N(f_N)}{N} = D_\pi(f), 
\end{equation}
where $f_N:=\gamma(p,\r_N)/\g(\pi,\r_N)$, and $f:=p/\pi$. 
\end{proposition}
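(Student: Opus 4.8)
## Proof proposal for Proposition \ref{dff2tyhm}

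The plan is to unfold the entropy production $D_N(f_N)$ into a sum over pairs $\{j,l\}$ and sets $A$, exploit symmetry to reduce to the pair $\{1,2\}$, and then show that the integrand converges to the corresponding nonlinear quantity. Write
$$
\frac{D_N(f_N)}{N} = \frac1{2N^2}\sum_{1\le j<l\le N}\sum_A\nu(A)\,\g(\pi,\r_N)\left[(f_N^{j,l,A}-f_N)\log\frac{f_N^{j,l,A}}{f_N}\right].
$$
Since $f_N=\g(p,\r_N)/\g(\pi,\r_N)$ and both $\g(p,\r_N)$ and $\g(\pi,\r_N)$ are symmetric, each summand is independent of $\{j,l\}$, so the sum over the $\binom N2$ pairs just contributes a factor $\binom N2/N^2\to 1/2$. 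Thus it suffices to prove that for each fixed $A$,
$$
\g(\pi,\r_N)\left[(f_N^{1,2,A}-f_N)\log\frac{f_N^{1,2,A}}{f_N}\right]\longrightarrow \pi\left[(f^Af^{A^c}-f)\log\frac{f^Af^{A^c}}{f}\right].
$$
The key observation is that $f_N$, restricted to the two particles $\eta(1),\eta(2)$, should be close to the tensor product $f^{\otimes 2}$ after the recombination is replaced by its mean-field analogue. More precisely, note $f_N^{1,2,A}(\eta)$ depends on $\eta$ only through $\eta(1),\eta(2)$ and the densities (which are frozen at $\r_N$); the quantity $(f_N^{1,2,A}-f_N)\log(f_N^{1,2,A}/f_N)$ is therefore a bounded function of $(\eta(1),\eta(2))$ — bounded because $p$ is irreducible so $p_*>0$, whence $f_N$ and $f_N^{1,2,A}$ are bounded above and below by constants depending only on $p$ (here I use that the conditional measure $\g(p,\r_N)$ has density w.r.t.\ $\g(\pi,\r_N)$ equal to $p^{\otimes N}/\pi^{\otimes N}$ up to a normalizing ratio $\pi^{\otimes N}(\O_{\r_N})/p^{\otimes N}(\O_{\r_N})$, which is bounded away from $0$ and $\infty$ by Corollary \ref{coro2}).

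Next I would identify the limit. The function $(\eta(1),\eta(2))\mapsto (f_N^{1,2,A}-f_N)\log(f_N^{1,2,A}/f_N)$ can be expressed as $\Phi(h_N(\eta(1),\eta(2)))$ for a fixed continuous function $\Phi(u,v)=(v-u)\log(v/u)$ on a compact subset of $(0,\infty)^2$, where $h_N$ records the pair of density values $(f_N(\eta),f_N^{1,2,A}(\eta))$. By the chaos estimate \eqref{eq:chaos00} (or \eqref{eq:chaos000}) from Theorem \ref{th:eqens}, $P_2\g(p,\r_N)\to p^{\otimes2}$ in total variation, and more is true: the two-particle density of $\g(p,\r_N)$ with respect to $\g(\pi,\r_N)$ converges pointwise to $f(\si)f(\tau)$, since conditioning on $\O_{\r_N}$ becomes negligible in the local CLT regime and the normalizations cancel in the limit by \eqref{eq:entchaos3}. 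Likewise, applying the recombination map $\eta\mapsto\eta^{1,2,A}$ to a $\g(p,\r_N)$-typical configuration produces, in the limit, the pair distributed as $p_A\otimes p_{A^c}$ on each of the two new particles, so $f_N^{1,2,A}\to f^A f^{A^c}$ pointwise on particles $1,2$. Feeding these two convergences into $\Phi$, which is continuous and bounded on the relevant compact region, and integrating against $\pi^{\otimes 2}$ (the limit of $P_2\g(\pi,\r_N)$ by the same argument applied to $p=\pi$), gives exactly $\pi[(f^Af^{A^c}-f)\log(f^Af^{A^c}/f)]$. Summing over the finitely many $A$ with $\nu(A)>0$ and using the factor $1/2$ from the pair count completes the proof.

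The main obstacle is making precise the claim that the relevant two-particle densities converge \emph{pointwise} (not merely that the marginals converge in total variation), and doing so uniformly enough to pass to the limit inside $\Phi$. This requires a slightly more quantitative use of the local CLT than the bare statement \eqref{eq:chaos00}: one needs to know that for a fixed configuration of particles $1,2$, the conditional probability $p^{\otimes(N-2)}(\O_{\r_N}\text{ adjusted for particles }1,2)$ behaves like $p^{\otimes N}(\O_{\r_N})$ up to a $1+o(1)$ factor, which follows from Proposition \ref{prop:LCLT} since shifting the target $M_N$ by a bounded amount changes $z_N$ by $O(1/\sqrt N)$ and hence changes the Gaussian weight by $1+o(1)$; the same applies after the recombination since $\eta^{1,2,A}$ also perturbs the site-counts only within particles $1,2$, leaving the density vector unchanged (the recombination preserves $\O_{\r_N}$). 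Once this is in hand, the boundedness of $\Phi$ on the compact set $[p_*^{\,n},p_*^{-n}]^2$ (say) and dominated convergence finish the argument. I would also remark, following Remark \ref{rem:anyp}, that everything goes through with $\pi$ replaced by any irreducible $p'$ sharing the marginals of $\pi$, though that is not needed here.
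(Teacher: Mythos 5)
Your overall plan (reduce by symmetry to a single pair and a fixed $A$, then pass to the limit using the two-particle statements of Section \ref{sec:clt_chaos}) is sound and in fact close in spirit to the paper's proof, but the central justification as written is wrong. On $\O_{\r_N}$ one has $f_N(\eta)=c_N\prod_{j=1}^N p(\eta(j))$, where only the normalizing constant $c_N=\pi^{\otimes N}(\O_{\r_N})/p^{\otimes N}(\O_{\r_N})$ is controlled by Corollary \ref{coro2}; the product over all $N$ particles fluctuates on an exponential scale across $\O_{\r_N}$ unless $p$ is itself a product measure. So $f_N$ and $f_N^{1,2,A}$ are \emph{not} functions of $(\eta(1),\eta(2))$ alone, they are not bounded above and below by constants depending only on $p$, and the assertion that ``$f_N^{1,2,A}\to f^Af^{A^c}$ pointwise'' is not meaningful. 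What \emph{is} local and bounded (when $p>0$; note irreducibility alone does not give $p>0$ everywhere) is the ratio $R_A(\eta(1),\eta(2)):=f_N^{1,2,A}/f_N=\frac{p(\eta_A(2)\eta_{A^c}(1))\,p(\eta_A(1)\eta_{A^c}(2))}{p(\eta(1))\,p(\eta(2))}$, because the $\pi$-factors and the constant $c_N$ cancel. The correct way to run your argument is to absorb $f_N$ into the measure: $\g(\pi,\r_N)\left[(f_N^{1,2,A}-f_N)\log (f_N^{1,2,A}/f_N)\right]=\g(p,\r_N)\left[(R_A-1)\log R_A\right]$, an expectation of a fixed bounded observable of the pair $(\eta(1),\eta(2))$, to which Theorem \ref{th:eqens} with $k=2$ applies directly; this replaces both your boundedness claim and your pointwise-density discussion.

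Second, the constants you state do not assemble to $D_\pi(f)$. The combinatorial prefactor is $\frac{1}{2N^2}\binom N2\to\frac14$ (you quote $\tfrac12$), and the single-pair symmetric term converges to $p^{\otimes2}\left[(R_A-1)\log R_A\right]=4\,\pi\left[(f^Af^{A^c}-f)\log\tfrac{f^Af^{A^c}}{f}\right]$, not to $\pi\left[(f^Af^{A^c}-f)\log\tfrac{f^Af^{A^c}}{f}\right]$: one factor $2$ comes from the involution $(\sigma,\tau)\mapsto(\tau_A\sigma_{A^c},\sigma_A\tau_{A^c})$ and another from the two particles each contributing a copy of the one-particle term. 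With the corrected factors ($\tfrac14\times 4$) your route does yield $D_\pi(f)$, and it is then essentially the paper's argument: the paper antisymmetrizes first, reducing to $\g(\pi,\r_N)\left[(f_N^{j,l,A}-f_N)\log f_N\right]$ as in \eqref{eq:fisha2}, and identifies the limit by showing that the conditioned ratio \eqref{clad} tends to $1$ via Proposition \ref{prop:LCLT} — which is precisely the two-particle chaos input you are invoking through your local-CLT remark about shifting $M_N$ by a bounded amount.
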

%
%
\begin{proof}
Since 
\begin{align}\label{eq:fisha2}
D_N(f_N) = -\frac1{N}\sum_{j<l}\sum_A\nu(A)\,\g(\pi,\r_N)\left[(f_N^{j,l,A}-f_N)\log{f_N}\right]\,.
\end{align}
and 
\begin{align}\label{eq:fisha4}
D_\pi(f) = - 2\sum_A\nu(A)\,\pi\left[(f^Af^{A^c}-f)\log{f}\right]\,,
\end{align}
it suffices to show that for every $j,l \in [N]$ and $A \subset [n]$ one has
\begin{align}
\lim_{N \to +\infty}&\sum_{\eta \in \Om_{\rho_N}}\g(\pi,\r_N)(\eta)\left(f_N(\eta^{j,l,A})-f_N(\eta)\right)\log\left(
{f_N(\eta)}\right)  \\  
&\qquad = 2\sum_{\sigma \in \Om}(p_A(\sigma_A)p_{A^c}(\sigma_{A^c})-p(\sigma))\log\left(\frac{p(\sigma)}{\pi(\sigma)}\right).
\end{align}
Note that
\begin{align}
& \sum_{\eta \in \Om_{\rho_N}}\pi_N(\eta)\left(f_N(\eta_{j,l,A})-f_N(\eta)\right)\log\left(
{f_N(\eta)}\right) \\
&=\sum_{\eta \in \Om_{\rho_N}} 
\frac{\prod_{\ell \neq j,l}p(\eta(l))}{p^{\otimes N}\left(\Om_{\rho_N}\right)}\left(
p(\eta^{j,l,A}(j))p(\eta^{j,l,A}(l))
-p(\eta(l))p(\eta(j))\right)\log\prod_{r=1}^N\frac{p(\eta(r))}{\pi(\eta(r))} \\
&=\sum_{\eta(l),\eta(j) }\frac{p^{\otimes (N-2)}\left(\Om^{\eta(j),\eta(l)}_{\rho_N}\right)}{p^{\otimes N}\left(\Om_{\rho_N}\right)}\left(p(\eta^{j,l,A}(j))p(\eta^{j,l,A}(l))
-p(\eta(l))p(\eta(j))\right)\log\frac{p(\eta(l))p(\eta(j))}{\pi(\eta(l))\pi(\eta(j))}
,
\end{align}
where $\Om^{\eta(j),\eta(l)}_{\rho_N}$ is the event 
\begin{align}
\sum_{\ell\neq j,l} \ind(\eta_i(\ell)=x) = (N-2)(\widetilde{\rho_N})_{i,x}\,,\quad  \forall i,x
\end{align}
and, for any fixed $\eta(j),\eta(l)$,  $\widetilde{\rho_N}$ denotes the density 
$$
(\widetilde{\rho_N})_{i,x} = \frac{N}{N-2}\, (\r_N)_{i,x} -  \frac{1}{N-2}\left(\ind(\eta_i(j)=x)+\ind(\eta_i(l)=x)
\right)\,.
$$
Therefore, it is sufficient to show that
\begin{align}\label{clad}
\frac{p^{\otimes (N-2)}\left(\Om^{\eta(j),\eta(l)}_{\rho_N}\right)}{p^{\otimes N}\left(\Om_{\rho_N}\right)} \;\to\; 1,
\end{align}
for all $i,j \in [N]$, for all fixed values of $\eta(j),\eta(l)\in\O$. 
We note that both $\rho_N,\widetilde{\rho_N}$ satisfy condition \eqref{hypo1}. Moreover, writing 
\begin{gather*}
z_N =\sqrt N \;V_1^{-1/2}\left(\r_N - \pi\right),\quad 
\widetilde z_N=\sqrt N \;V_1^{-1/2}\left(\widetilde\r_N -\pi\right), 
\end{gather*}
we see  that $$
\scalar{z_N}{z_N} -\scalar{\widetilde z_N}{\widetilde z_N} \to 0\,,\quad N\to\infty,
$$
for all fixed values of $\eta(j),\eta(l)$. The desired claim \eqref{clad} then follows from Proposition \ref{prop:LCLT}.
\end{proof}

\section{Uniform in time propagation of chaos}
\label{sec:unifintime}
The main goal in this section is to prove Theorem \ref{th:unifintime}. 
We first introduce some notation. Let $\lambda_{\eta}\in \cP(\O)$ be the empirical measure $\lambda_{\eta}:=\frac1N\sum_{i=1}^N\delta_{\eta(i)}$, where $\eta:=\left(\eta(1),\dots,\eta(N)\right)\in\O^N$.
We consider the Wasserstein distance on $\cP(\O)$ associated to the Hamming distance on $\O$: 
\begin{align}\label{def:W}
W(p,q):= \inf_{\G \in \Pi(p,q)}\sum_{\si,\si' \in \Om}\G(\si,\si')\sum_{i=1}^n\ind_{\si_i\neq\si'_i},
\end{align}
where $p,q\in\cP(\O)$, and $\Pi(p,q)$ denotes the set of all couplings $\G$ of $p$ and $q$.
We remark that the total variation distance $\|p-q\|_\tv$ is defined as above with $\sum_{i=1}^n\ind_{\si_i\neq\si'_i}$ replaced by $\ind_{\si\neq\si'}$ and thus one has
\begin{align}\label{eq:tvW}
\|p-q\|_\tv\leq W(p,q) \leq n\,\|p-q\|_\tv.
\end{align}
In contrast with the total variation distance, the distance $W$ has a convenient monotonicity along the evolution, see Lemma \ref{wasscontrlemma} and Remark \ref{rem:mono} below. Recall the definition of the non-separation probability $r(\nu)$ from \eqref{def:rnu}.

\begin{theorem}\label{unifpropchaos}
Assume that $\mu_{N}\in\cP(\O^N)$ is $p-$chaotic and the measure $\nu$ is separating. Then for all $k \in \bbN$ and any function of the form $\phi_k:= \phi^1\otimes\dots\otimes\phi^k:\O^k\mapsto\bbR$, where $ \phi^i:\Om\mapsto \bbR$, and such that $\|\phi_k\|_{\infty}\leq 1$, the following inequality holds
\begin{align}\label{propchaosuniftime}
|P_k\mu_{N,t}(\phi_k)-p_t^{\otimes k}(\phi_k)|&\leq  \frac{2k(k-1)}{N} + \frac{4k^2n^5}{D(\nu)\,N} (1-e^{-D(\nu)\, t}) 
+ 2 k\,
\mu_N\left[W\left(p,\lambda_{\chi_N}\right)\right],
\end{align} 
where $\chi_N\in\O^N$ has distribution $\mu_{N}$, and
$D(\nu):=1-r(\nu)$. 
\end{theorem}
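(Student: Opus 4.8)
The plan is to compare the true $N$-particle evolution with a fictitious ``decoupled'' system in which each of the $k$ tagged particles evolves independently according to the nonlinear flow, using a synchronous (Markovian) coupling of the generator $\cL_N$ with $k$ independent copies of the nonlinear dynamics. Concretely, I would run the GRT/recombination dynamics started from $\mu_N$ and, in parallel, run $k$ independent nonlinear particles with law $p_t$; the coupling is built so that when a Poisson clock attached to a tagged pair rings, the set $A\sim\nu$ is shared, and the partner particle in the $N$-system is re-sampled via the empirical measure $\lambda$ of the current configuration, which is coupled to a fresh $p_t$-sample. The first term $2k(k-1)/N$ in \eqref{propchaosuniftime} accounts for the ``bad'' events in which two tagged particles collide directly with each other (probability $O(k^2/N)$ per unit time is not enough by itself, so one must use the product structure of $\phi_k$ and the fact that a direct collision only affects a bounded number of coordinates); the third term $2k\,\mu_N[W(p,\lambda_{\chi_N})]$ is the cost of the initial coupling of $p$ with the empirical measure of $\chi_N$.

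The heart of the matter is the middle term. Along the coupling, the discrepancy between a tagged particle in the $N$-system and its nonlinear twin is created whenever that particle is involved in a collision and the partner's law $\lambda_t$ (the current empirical measure) differs from $p_t$. I would set up a Grönwall-type inequality for the quantity $E_t := \mu_N[W(p_t^{\rm N-copy},\lambda_{\eta_t})]$ or, more precisely, for the expected number of coordinates on which a tagged particle and its twin disagree. The key analytic input is a \emph{contraction estimate for the nonlinear flow in the $W$ distance}: the separating assumption forces the marginals on $A$ and $A^c$ to decorrelate, and one shows that the map $p\mapsto Q(p)-p$ contracts $W$ at rate $D(\nu)=1-r(\nu)$, up to the $n^6$-type combinatorial loss coming from the fact that a single recombination can propagate a disagreement at one coordinate to all $n$ coordinates of a particle and there are $O(n^2)$ pairs of coordinates whose joint law must be controlled, squared once more when passing through the empirical-measure fluctuations. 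This contraction, combined with the fact that new discrepancies are injected at rate $O(k/N)$ per tagged particle (each tagged particle collides with a non-tagged one at rate $\approx 1$, and the partner's law deviates from $p_t$ by the current $W$-discrepancy, which is itself $O(1/\sqrt N)$ at time zero via Corollary \ref{coro3} but — crucially — one does \emph{not} need smallness, only the recursive structure), yields
\begin{align}
E_t \leq \frac{C k n^6}{D(\nu) N}\,(1-e^{-D(\nu)t}) + e^{-D(\nu)t}E_0,
\end{align}
after which unfolding the coupling gives \eqref{propchaosuniftime}.

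I would organize the write-up as: (i) state and prove the nonlinear $W$-contraction lemma (this is Lemma \ref{wasscontrlemma}, referenced in the text, and is where the separating hypothesis and the power of $n$ enter); (ii) describe the coupling of $\cL_N$ with $k$ independent nonlinear copies, isolating the event $B$ of a direct tagged--tagged collision and bounding $\Prob(B$ by time $t)$; (iii) on $B^c$, write the evolution equation for the expected per-coordinate disagreement of each tagged particle and close the Grönwall argument using (i); (iv) translate a bound on expected Hamming disagreement of the tagged block into the bound on $|P_k\mu_{N,t}(\phi_k)-p_t^{\otimes k}(\phi_k)|$ using the product form of $\phi_k$ and $\|\phi_k\|_\infty\le1$, together with the elementary fact that if $\prod_i\phi^i$ and $\prod_i\tilde\phi^i$ are products of $[-1,1]$-valued functions then their difference is bounded by the number of indices $i$ at which the arguments differ.

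The main obstacle I anticipate is step (iii) — specifically, showing that the discrepancy injected into a tagged particle by a collision with a non-tagged partner is itself controlled by $E_t$ rather than by something uncontrolled, i.e.\ closing the recursion. The subtlety is that the non-tagged partner's law is the $N$-particle marginal, which is only close to $p_t$ because of propagation of chaos — the very thing we are proving. The resolution, following Mischler--Mouhot, is to couple the partner not with $p_t$ directly but with a sample from the \emph{empirical measure} $\lambda_{\eta_t}$ of the current $N$-configuration, and to use the nonlinear contraction to compare $\lambda_{\eta_t}$'s "effective drift" with that of $p_t$; the empirical measure satisfies a closed evolution inequality because the full dynamics is exchangeable, so one never needs an a priori chaos bound at positive time, only the fixed-time input $E_0$. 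Getting the combinatorial constant (the $n^6$) right, and making sure the Poisson rate $1/N$ per pair produces exactly the stated $k^2 n^6/(D(\nu)N)$ after summing over the $O(N)$ potential partners of each of the $k$ tagged particles, is the bookkeeping-heavy part but is routine once the structure is in place.
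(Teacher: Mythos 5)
Your plan is a trajectorial coupling with a Gr\"onwall closure, which is not the route the paper takes, and as described it has two genuine gaps. First, the key analytic input you invoke does not exist in the form you need. Lemma \ref{wasscontrlemma} gives only \emph{monotonicity}, $W(S_t(p),S_t(q))\leq W(p,q)$; the exponential-rate estimates of the paper (Theorem \ref{contrlemma2} and Theorem \ref{wasscontrlemma2}) are in total variation, hold only when $p$ and $q$ have \emph{equal single-site marginals} (without this no decay is possible, since the flow preserves marginals), and carry prefactors of order $n^3$--$n^5$, so they are not contractions at small times and cannot be fed naively into a Gr\"onwall recursion to get a bound uniform in $t$. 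Moreover, the middle term of \eqref{propchaosuniftime} is not obtained from a nonlinear contraction alone: the paper interpolates via $\Psi^k_s=\sum_\eta \mu_{N,t-s}(\eta)\,(S_s(\lambda_\eta))^{\otimes k}$, differentiates in $s$, and uses the identity $Q(S_s(\lambda_\eta))=\frac1N\sum_{i<j}\sum_A\nu(A)\,\bar S_s(\lambda_\eta)(\lambda_{\eta^{i,j,A}}-\lambda_\eta)$ together with the \emph{linearized} semigroup estimate \eqref{nonlincont2} and the second-order consistency estimate \eqref{nonlincont3}, applied with $p=\lambda_{\eta^{i,j,A}}$, $q=\lambda_\eta$, $\|p-q\|_{\rm TV}\leq 2/N$ (these do have equal marginals). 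The $e^{-D(\nu)s}$ factors there are exactly what makes the time integral bounded uniformly in $t$, and the $n^5$, $n^6$ come from the prefactors of \eqref{nonlincont3} and of the squared first-order differences. Your proposal never introduces the linearization $\bar S_t(q)$ or a second-order expansion, and this is where the $O(1/N^2)$ per collision, summed over $O(N^2)$ pairs, is actually produced; the circularity you worry about in step (iii) is resolved in the paper precisely by this deterministic interpolation argument, not by an exchangeability/closed-evolution claim for the empirical measure.

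Second, your accounting of the first term is wrong in a way that breaks uniformity in time: the probability that two of the $k$ tagged particles collide directly by time $t$ is of order $k^2t/N$, which grows linearly in $t$ and is not bounded by $2k(k-1)/N$. In the paper this term has nothing to do with recollisions along trajectories; it is the static, fixed-time combinatorial comparison between $P_k\mu_{N,t}$ and the moment measure $F^{k,N}_t=\sum_\eta\mu_{N,t}(\eta)(\lambda_\eta)^{\otimes k}$ (sampling $k$ particles without versus with replacement), which costs $2k(k-1)\|\phi_k\|_\infty/N$ uniformly in $t$. Your treatment of the third term (initial coupling cost $2k\,\mu_N[W(p,\lambda_{\chi_N})]$ via monotonicity of $W$) does match the paper, but to repair the rest you would either have to reproduce the Mischler--Mouhot interpolation with the nonlinear and linearized contraction estimates of Theorems \ref{contrlemma2} and \ref{wasscontrlemma2}, or supply a genuinely new pathwise contraction mechanism that is currently missing from the plan.
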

The proof of Theorem \ref{unifpropchaos} follows the steps of a general approach, the so called ``abstract theorem'', see \cite{grunbaumchaos,kacsprog,anewapp}, see also \cite{revchaos} for a review. 
In our discrete setting many aspects of this approach take a simpler form, and no extra assumption on the initial distribution is needed besides the $p$-chaoticity for some $p\in\cP(\O)$. This general approach  requires however several model-specific inputs.  Our main original contribution here consists in establishing 
the key estimates stated in Theorem \ref{contrlemma2} and Theorem \ref{wasscontrlemma2} below, where we prove new contraction inequalities for both the non linear and the linearized evolutions associated to \eqref{boltzeq}. 
Before proving Theorem \ref{unifpropchaos}, let us show that it implies Theorem \ref{th:unifintime}.

 \subsection{Proof of Theorem \ref{th:unifintime}}
 It is well known  that $$\mu_N\left[W\left(p,\lambda_{\chi_N}\right)\right] \to 0$$ if and only if $\mu_{N}$ is $p$-chaotic; see e.g.\ 
Lemma 3.34 in \cite{revchaos}. This, combined with the estimate from Theorem \ref{unifpropchaos}, proves the uniform in time propagation of chaos asserted in Theorem \ref{th:unifintime}. To prove the quantitative statement \eqref{eq:chaos1}, it suffices  to show that when $\mu_{N}=\gamma_N(p,\r_N)$, then 
\begin{align}\label{eq:empi}
\mu_N\left[W\left(p,\lambda_{\chi_N}\right)\right] \leq \frac{C_1}{\sqrt {N}},
\end{align}
where $C_1$ is a constant depending on $p$ and $n$. 
This statement follows from Corollary \ref{coro3} by noting that  $W(p,q)\leq n\|p-q\|_{\rm TV}$, for any $p,q\in\cP(\O)$, see \eqref{eq:tvW}. We remark that as in Corollary \ref{coro3} one has the same estimate with $\sqrt N$ replaced by $N^{1/4}$ if instead of $\g_N(p)$ we take $\g(p,\r_N)$ with an arbitrary admissible $\r_N$ satisfying \eqref{hypo1}.

We turn to the proof of Theorem \ref{unifpropchaos}. 
We start with some preliminary facts. 

\subsection{Wild sums and McKean trees}
Given $f, g : \Om \rightarrow \bbR,$ we adopt the notation
\begin{align}
&\label{defconv1} (f \circ g)^{A}:= \frac{1}{2}\left(f_A \otimes g_{A^c}+ g_A \otimes f_{A^c} \right), \\
&\label{defconv2}  f \circ g := \sum_{A \subset [n]}\nu(A)(f \circ g)^{A},
\end{align}
where $f_A(\si_{A}) :=  \sum_{\si_{A^c}}f(\si_A\si_{A^c})$.
Then the nonlinear equation \eqref{boltzeq} is given by 
\begin{align}\label{boltzeqc}
\frac{d}{dt}\,p_t = p_t\circ p_t - p_t .
\end{align}
The convolution product defined by $f\circ g$ is commutative and distributive, but not associative.  
Following Wild's original construction \cite{Wild} we write
the solution of \eqref{boltzeqc} with initial datum $p_0=p$, as
\begin{align}\label{wildsums}
p_t = e^{-t}\sum_{k=1}^{\infty}(1-e^{-t})^{k-1}p^{(k)},
\end{align}
where 
the  $\{p^{(k)}\}_{k \geq 1}$ are probability measures on $\Om$ defined inductively by
\begin{align}\label{eq:wildp}
p^{(1)}=p, \qquad 
p^{(k)} = \frac{1}{k-1}\sum_{j=1}^{k-1}p^{(j)}\circ p^{(k-j)}\,,\quad k\geq 2.
\end{align}
The validity of \eqref{wildsums} can be easily checked by direct inspection, see e.g.\ the argument after \eqref{eq:solve} below for a similar computation. 

Moreover, following McKean \cite{mckean}, we may express $p^{(k)}$ as
a weighted sum over rooted binary trees $\g$ with $k$ leaves. We now recall the details of this construction, and refer the reader to \cite{carlenwild} for further background. Let $\G(k)$ denote the set of rooted binary trees with $k$ leaves and call $\alpha_k(\gamma)$, $\g\in\G(k)$,  the probability over $\G(k)$ obtained by the following procedure. $\G(1)=\{\g_1\}$ is just the empty tree with only the root with $\a_1(\g_1)=1$, and $\G(2)=\{\g_2\}$ where $\g_2$ is the unique tree obtained by adding two children to the root, with $\a_2(\g_2)=1$.  Then, recursively, for $k\geq 2$, for any $\g_{k-1}\in\G(k-1)$, consider all possible trees $\g_{k-1}^i\in\G(k)$, $i=1,\dots,k-1$,   obtained by adding two children to the $i$-th leaf of $\g_{k-1}$, and for any $\g\in\G(k)$,  set 
\[
\a_k(\g)=\sum_{\g_{k-1}\in\G(k-1)}\a_{k-1}(\g_{k-1})\sum_{i=1}^{k-1} \frac{\ind(\g=\g_{k-1}^i)}{k-1}\,.
\]
%
This defines a probability $\a_k$ on $\G(k)$, for any $k\in\bbN$, and one checks, recursively, that 
for all $k\geq 2$,
\begin{align}\label{eqreca}
\a_k(\g)=\frac1{k-1}\a_j(\g_l)\a_{k-j}(\g_r),
\end{align}
where $\g_l$ and $\g_r$ denote respectively the subtree of $\g$ rooted at the left child of the root and
the subtree of $\g$ rooted at the right child of the root, while $j$ denotes the number of leaves in $\g_l$.  
 Then,
by induction over $k$ it follows that for all $k\in\bbN$,
\begin{align}\label{eq00}
p^{(k)} = \sum_{\g\in \Gamma(k)}\alpha_k(\g)C_{\gamma}(p),
\end{align}
where $C_{\gamma}(p)\in\cP(\O)$ is described as follows. Each internal node of $\g$ represents a collision and the tree $\g$ describes the collision history. Then $C_\g(p)$ represents the distribution obtained at the root after all collisions from $\g$ have been performed, starting with the distribution $p$ at each leaf of $\g$.

\begin{figure}
\center

\begin{subfigure}

\begin{tikzpicture}[scale=0.8]
    
    \draw  (1,-1) -- (2,1);
    \draw  (3,-1) -- (2,1);
   \draw  (2,1) -- (4,3);
    \draw  (5,-1) -- (6,1);
    \draw  (7,-1) -- (6,1);
    \draw  (6,1) -- (4,3);
    
    \node[shape=circle, draw=black, fill = white, scale = 1.5]  at (4,3) {}; 
    \node[shape=circle, draw=black, fill = white, scale = 1.5]  at (2,1) {};
    \node[shape=circle, draw=black, fill = gray, scale = 1.5]  at (1,-1) {};
    \node[shape=circle, draw=black, fill = gray, scale = 1.5]  at (3,-1) {};
    \node[shape=circle, draw=black, fill = white, scale = 1.5]  at (6,1) {};
    \node[shape=circle, draw=black, fill = gray, scale = 1.5]  at (5,-1) {};
    \node[shape=circle, draw=black, fill = gray, scale = 1.5]  at (7,-1) {};
    
    \node at (1/4,-1) {$p$};
    \node at (9/4,-1) {$p$};
    \node at (17/4,-1) {$p$};
    \node at (25/4,-1) {$p$};
    \node at (1,1) {$p \circ p$};
    \node at (20/4,1) {$p \circ p$};
    \node at (8/4,3) {$(p\circ p) \circ (p \circ p)$};

\end{tikzpicture}
\end{subfigure}
\hspace{2 cm}
\begin{subfigure}

\begin{tikzpicture}[scale=0.8]

    \draw  (1,-3) -- (4,3);
    \draw  (3,-3) -- (2,-1);
    \draw  (4,-1) -- (3,1);
    \draw  (5,1) -- (4,3);
    
    \node[shape=circle, draw=black, fill = white, scale = 1.5]  at (3,1) {}; 
    \node[shape=circle, draw=black, fill = white, scale = 1.5]  at (2,-1) {};
    \node[shape=circle, draw=black, fill = gray, scale = 1.5]  at (1,-3) {};
    \node[shape=circle, draw=black, fill = gray, scale = 1.5]  at (3,-3) {};
    \node[shape=circle, draw=black, fill = gray, scale = 1.5]  at (4,-1) {};
    \node[shape=circle, draw=black, fill = white, scale = 1.5]  at (4,3) {};
    \node[shape=circle, draw=black, fill = gray, scale = 1.5]  at (5,1) {};
    
    \node at (1/4,-3) {$p$};
    \node at (9/4,-3) {$p$};
    \node at (1,-1) {$p \circ p$};
    \node at (13/4,-1) {$p$};
    \node at (17/4,1) {$ p$};
    \node at (6/4,1) {$(p\circ p) \circ p$};
    \node at (8/4,3) {$((p\circ p) \circ p)\circ p$};

\end{tikzpicture}
\end{subfigure}
\caption{Two possible trees $\g,\g'\in\G(4)$, and the corresponding distributions  $C_{\gamma}(p) = (p\circ p) \circ (p \circ p)$ and $C_{\gamma'}(p) = ((p\circ p) \circ p)\circ p.$}
\label{fig1}
\end{figure}
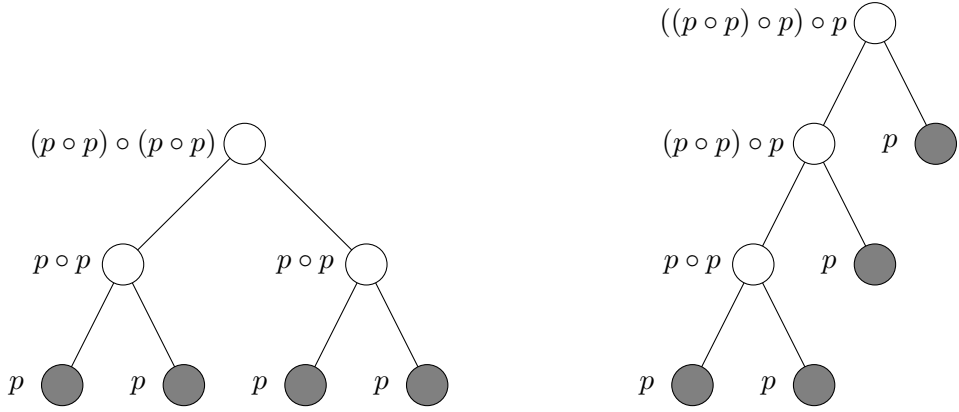


For example, if $k=3$, there are only two trees $\g,\g'\in\G(3)$, with $\a_3(\g)=\a_3(\g')=1/2$ and 
 \begin{align}\label{eq2}
C_{\gamma}(p)=(p \circ p)\circ p=p\circ(p \circ p)= C_{\gamma'}(p).
\end{align}
We refer to Figure \ref{fig1} for two examples of $C_{\gamma}(p)$ for $\g\in\G(4)$.

Since each collision is of the form $q\circ q'$ for some $q,q'\in\cP(\O)$ and since by \eqref{defconv1} one has 
\begin{align}\label{lab:nons}
&p \circ q = \sum_{A \subset [n]}\bar\nu(A)\,p_A \otimes 	q_{A^c},
\end{align}
where $\bar\nu(A)= \frac12(\nu(A)+\nu(A^c))$, one can write the following expansion for the resulting measure at the root:
\begin{align}\label{eq5}
C_{\gamma}(p)= \sum_{\vec A\in \cV_n^{k-1}}\nu(\vec A) C^{\vec A}_{\gamma}(p),
\end{align}
where $\cV_n$ is the set of subsets of $[n]$, so that $\vec A\in \cV_n^{k-1}$ represents a pattern $\vec A=(A_1,\dots,A_{k-1})$, 
\begin{equation}\label{eq:nua}
\nu(\vec A)= \prod_{i=1}^{k-1}\bar\nu(A_i)\end{equation} and $C^{\vec A}_{\gamma}(p)\in\cP(\O)$ is the distribution computed as follows. Each internal node $v_i$, $i=1,\dots,k-1$, in $\gamma$ (in some fixed order) is associated with the set $A_i$, and we attach the mark $A_i$ to the edge connecting $v_i$ to its left child and the mark $A_i^c$ to  the edge connecting $v_i$ to its right child in $\gamma$.  In this way we obtain a tree $\gamma$ with marks on all its edges. Let $d_i$ denote the depth of the $i$-th leaf of $\gamma$ (e.g.\ counting from the leftmost leaf), and consider 
$p_{V_i(\vec A)}$, the marginal of the measure $p$ on the subset 
\begin{align}\label{eqa5}
V_i(\vec A):=\cap_{j=1}^{d_i} A^i_j,
\end{align} where $A^i_1,\dots,A^i_{d_i}$ are the marks encountered 
along the edges of the unique path from the root to the $i$-th leaf. Then $ C^{\vec A}_{\gamma}(p)$ is given by
\begin{align}\label{eq6}
C^{\vec A}_{\gamma}(p)
= p_{V_1(\vec A)}\otimes\cdots\otimes p_{V_k(\vec A)}.
\end{align}
Note that some of the $V_i(\vec A)$ may be empty. However, they  form a partition of $[n]$, namely  $V_i(\vec A)\cap V_j(\vec A)=\emptyset$ for $i\neq j$ and  $\cup_{i=1}^kV_i(\vec A)=[n]$, which can be seen as the result of a fragmentation process $$[n]\to (A_1,A_1^c) \to (A_1\cap A_2,A_1\cap A_2^c,A_1^c)\to\cdots\to (V_1(\vec A),\cdots,V_k(\vec A)).$$ 
As an example, in the case \eqref{eq2} we have $\vec A=(A_1,A_2)$, and 
\begin{align}(p \circ p)\circ p = \sum_{A_1,A_2 \subset[n]}\bar\nu(A_1)\bar\nu(A_2)p_{A_1\cap A_2}\otimes p_{A_1\cap A_2^c} \otimes p_{A_1^c} .
\end{align}
The proof of \eqref{eq00}-\eqref{eq5} can be easily done by induction, by splitting the tree $\g\in\G(k)$ into the left and right subtrees $\g_l,\g_r$ and using the relation \eqref{eqreca}, see the proof of \eqref{eq:idto} below for a closely related explicit computation. 
  
In conclusion, from \eqref{wildsums}, \eqref{eq00} and \eqref{eq5} we  obtain the following representation of the distribution $p_t$ as a convex combination of distributions $C^{\vec A}_{\gamma}(p)$: 
\begin{align}\label{eq0}
p_t=\sum_{k=1}^{\infty}\b_t(k)\sum_{\g\in \Gamma(k)}\alpha_k(\g)\sum_{\vec A\in \cV_n^{k-1}}\nu(\vec A) \,C^{\vec A}_{\gamma}(p),
\end{align}
where $\b_t(k) := e^{-t}(1-e^{-t})^{k-1}$ is a probability on $\bbN$ for each $t\geq 0$, $ \alpha_k(\g)$ is a probability on $\G(k)$ for all $k$, and $\nu(\vec A)$ is a probability on $\cV_n^{k-1}$ for all $k$.

\subsection{Contractive estimates for the nonlinear semigroup}
It is convenient to use the notation $S_t(p)=p_t$ for the solution of \eqref{boltzeqc} with initial datum $p_0=p$, so that $S_{t+s}=S_tS_s=S_sS_t$, for all $s,t\geq 0$. This defines the nonlinear semigroup $\{S_t, \,t\geq 0\}$. We show a contraction property for $S_t$ when the two initial distributions $p,q$ have the same marginals, that is $p_i=q_i$ for all $i \in [n]$. 
\begin{theorem}\label{contrlemma2}
For any probability measure $p,q \in \cP(\O)$ such that $p_{i} = q_{i}$ for all $i \in [n]$, all $t\geq 0$,
\begin{align}
&\label{nonlincont1} \|S_t(p)-S_t(q)\|_\tv \leq \tfrac12\,n(n-1)\,\|p-q\|_\tv \,e^{-D(\nu) t}
\end{align}
where $D(\nu)=1-r(\nu)$.
\end{theorem}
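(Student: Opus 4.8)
The natural route is to combine the Wild--McKean representation \eqref{eq0} of the nonlinear flow with a coupling of the two initial data, so that no monotonicity of $\|\cdot\|_\tv$ along the evolution is needed. Since $S_t(p)$ and $S_t(q)$ are given by \eqref{eq0} with the \emph{same} coefficients $\b_t(k)\a_k(\g)\nu(\vec A)$, only the seed measure being changed, the triangle inequality for total variation gives
\[
\|S_t(p)-S_t(q)\|_\tv\le\sum_{k\ge1}\b_t(k)\sum_{\g\in\G(k)}\a_k(\g)\sum_{\vec A\in\cV_n^{k-1}}\nu(\vec A)\,\|C^{\vec A}_\g(p)-C^{\vec A}_\g(q)\|_\tv .
\]
By \eqref{eq6}, $C^{\vec A}_\g(p)=p_{V_1(\vec A)}\otimes\cdots\otimes p_{V_k(\vec A)}$ is a product of marginals over a partition $\{V_1(\vec A),\dots,V_k(\vec A)\}$ of $[n]$, so the standard subadditivity of total variation under products yields $\|C^{\vec A}_\g(p)-C^{\vec A}_\g(q)\|_\tv\le\sum_{m}\|p_{V_m(\vec A)}-q_{V_m(\vec A)}\|_\tv$. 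Here the hypothesis $p_i=q_i$ enters decisively: $p_{V_m}=q_{V_m}$ whenever $|V_m|\le1$, so only the non-singleton blocks contribute, and each contributes at most $\|p-q\|_\tv$.

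It then remains to control the averaged number of non-singleton blocks. Since a block of size $\ge2$ contains at least one pair $\{i,j\}\subset[n]$ and distinct blocks carry disjoint pairs, this number is at most $\sum_{i<j}\ind\{i,j\ \text{lie in the same block of}\ V(\vec A)\}$, and a routine accounting of how many coordinates such a block may hold produces the constant $\tfrac14 n^2(n-1)$ stated. Thus one is reduced to estimating, for each fixed pair $i\neq j$, the quantity
\[
\phi_{ij}(t):=\sum_{k\ge1}\b_t(k)\sum_{\g\in\G(k)}\a_k(\g)\sum_{\vec A}\nu(\vec A)\,\ind\{i,j\ \text{in the same block of}\ V(\vec A)\},
\]
which is precisely the probability that $i$ and $j$ are never separated by the random fragmentation encoded in the collision history. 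Reading the McKean tree from the root, each internal node draws $A\sim\bar\nu$ and sends the current block into its $A$-part and $A^c$-part (cf.\ \eqref{lab:nons}--\eqref{eqa5}), while the weights $\b_t(k)\a_k(\g)$ reproduce the rate-$1$ branching structure of the identity $p_t=e^{-t}p+\int_0^t e^{-s}(p_{t-s}\circ p_{t-s})\dd s$. Conditioning on the first branching time (an exponential variable of rate $1$) and using that at a branching $i$ and $j$ stay together with probability $\rho_{ij}:=\text{Pr}_{\bar\nu}(\{i,j\}\subset A\ \text{or}\ \{i,j\}\subset A^c)$, one obtains the renewal identity $\phi_{ij}(t)=e^{-t}+\rho_{ij}\int_0^t e^{-s}\phi_{ij}(t-s)\dd s$, hence $\phi_{ij}(t)=e^{-(1-\rho_{ij})t}$. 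Finally, since $\bar\nu$ is the symmetrization of $\nu$ and the event above is invariant under $A\mapsto A^c$, one checks $\rho_{ij}=\text{Pr}_\nu(\{i,j\}\subset A\ \text{or}\ \{i,j\}\subset A^c)\le r(\nu)$, so $\phi_{ij}(t)\le e^{-D(\nu)t}$; feeding this into the bounds above gives \eqref{nonlincont1}. (Taking $q=\pi$, which has the same marginals as $p$, then yields the total variation estimate \eqref{tvproduct} of Theorem \ref{prop:tvbo} as a special case.)

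The step I expect to be the crux is extracting the factor $e^{-D(\nu)t}$: used crudely the bounds above only show that $\|S_t(p)-S_t(q)\|_\tv$ stays bounded by a fixed polynomial in $n$ times $\|p-q\|_\tv$. The decay comes from recognising that in the fragmentation dynamics every pair of coordinates is broken apart at rate at least $D(\nu)=1-r(\nu)$, and that once a pair is split its two coordinates evolve in distinct, independent subtrees and can no longer jointly sustain a discrepancy between the conditional tensor products $C^{\vec A}_\g(p)$ and $C^{\vec A}_\g(q)$. Making this quantitative cleanly is exactly the role of the renewal identity for $\phi_{ij}$ together with the elementary identity $\rho_{ij}=\text{Pr}_\nu(\{i,j\}\subset A\ \text{or}\ \{i,j\}\subset A^c)$, which is what ties the decay rate back to the non-separation constant $r(\nu)$.
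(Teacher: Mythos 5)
Your proposal is correct and follows essentially the same route as the paper's proof: the Wild--McKean expansion \eqref{eq0}, cancellation of the singleton blocks because $p$ and $q$ share marginals, a union bound over pairs of coordinates lying in a common fragment, and the renewal identity for the pair non-separation probability, which is precisely Lemma \ref{lemma:omega} solved to give $e^{-(1-r(\nu))t}$. The only real difference is in one intermediate step: where the paper bounds $\scalar{C^{\vec A}_{\gamma}(p)-C^{\vec A}_{\gamma}(q)}{\phi}$ by an explicit leaf-by-leaf optimal coupling, you invoke subadditivity of total variation under tensor products together with its contraction under marginalization, which is simpler and in fact yields the slightly better prefactor $\tfrac12\,n(n-1)$ in place of $\tfrac14\,n^2(n-1)$.
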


\begin{proof} 
From \eqref{eq0} we write 
\begin{align}\label{eqb17}
S_t(p) - S_t(q) 
=\sum_{k=1}^\infty \b_t(k) s^{(k)},
\end{align}
where
\begin{align}\label{eqba17}
s^{(k)}=\sum_{\gamma\in\Gamma(k)}\alpha_k(\gamma)
\sum_{\vec A\in \cV_n^{k-1}}\nu(\vec A) \,\left[C^{\vec A}_{\gamma}(p)-C^{\vec A}_{\gamma}(q)\right]\,.
\end{align}
We introduce some further notation in order to handle more explicitly  the difference of probability measures $C^{\vec A}_{\gamma}(p)-C^{\vec A}_{\gamma}(q)$. For any given $\g\in\G(k)$, and $\vec A\in \cV_n^{k-1}$, $\phi:\O\mapsto\bbR$, we may write
\begin{align}\label{eqb28}
\!\!\!\!\!\!\!\scalar{C^{\vec A}_{\gamma}(p)-C^{\vec A}_{\gamma}(q)}{\phi}
=\!\!\!\!\!\!\!\!
\sum_{(x,y)\in\Omega^k\times \Omega^k}\mu(x,y)  \sum_{z\in\Omega} \phi(z)\left(\ind(z=u(y,\vec A))-\ind(z=u(x,\vec A))
 \right)
\end{align}
where $x=(x^i,i=1,\dots,k)\in\O^k$ stands for the configurations sampled with $q^{\otimes k}$ over the leaves of $\g$,  $y=(y^i,i=1,\dots,k)\in\O^k$ stands for the configurations sampled with $p^{\otimes k}$ over the leaves of $\g$, and $\mu(x,y)=\prod_{i=1}^k\mu_1(x^i,y^i)$ denotes a coupling of these two product measures, such that 
for every $i=1,\dots,k$, $\mu_1(x^i,y^i)$ is the optimal coupling of $(q,p)$: 
\begin{align}\label{eq29}
\|p-q\|_{\rm TV} = \sum_{(x,y)\in\Omega^k\times\Omega^k}\mu(x,y) \ind(x^i\neq y^i)=\sum_{x^i,y^i\in\Omega}\mu_1(x^i,y^i) \ind(x^i\neq y^i).
\end{align}
The notation $u(x,\vec A)$ in 
\eqref{eqb28} is defined as follows. Note that $x^i\in\O$ is a vector $x^i=(x^i_1,\dots,x^i_n)$, for every $i$, with $x^i_\ell\in\{0,\dots,q_\ell\}$, and write $x^i_A=(x^i_\ell)_{\ell\in A}$ for the content of $x^i$ on $A\subset[n]$. 
With this notation, given $x\in\Omega^k$ and $\vec A\in\cV_n^{k-1}$,  $u(x,\vec A)\in\O$ is defined as the unique configuration  such that 
$$u(x,\vec A)_{V_i(\vec A)} = x^i_{V_i(\vec A)}$$ for every $i=1,\dots,k$. In words, for each $i\in[k]$, the content of the configuration $u(x,\vec A)$ on the set $V_i(\vec A)\subset V$ is taken from the configuration $x^i$ at the $i$-th leaf. The validity of \eqref{eqb28} is thus a consequence of \eqref{eq6}.

Let $\partial\g$ denote the set of leaves of $\g$. 
Notice that 
\begin{gather*}
\ind(z=u(x,\vec A)) =
\prod_{i\in \partial \g}
\ind(z_{V_i(\vec A)}=x^i_{V_i(\vec A)})
\end{gather*}
Let 
$F=F(\vec A,\g)\subset \partial \g$ denote the set of leaves $i\in \partial \g$ such that $|V_i(\vec A)|> 1$, and write
$$
\ind(z=u(x,\vec A)) = w_F (x,z,\vec A) w_{F^c}(x,z,\vec A)\,, $$
where for any $S\subset \partial \g$
we write
\begin{equation}\label{defwS}
w_S(x,z,\vec A) = \prod_{i\in S}
\ind(z_{V_i(\vec A)}=x^i_{V_i(\vec A)}).
\end{equation}
We write $X=(X^i)_{i=1,\dots,k}$ and $Y=(Y^i)_{i=1,\dots,k}$ for the random variables with distribution $q^{\otimes k}$ and $p^{\otimes k}$ respectively. Using the fact that $\mu$ is a product over the leaves, and the fact that $p,q$ have  the same marginals, for fixed $z\in\O$ we have 
\begin{align}\label{eq282a}
\mu[w_{F^c} (X,z,\vec A)]&=\sum_{(x,y)}\mu(x,y)  \prod_{i\in F^c}
\ind(z_{V_i(\vec A)}=x^i_{V_i(\vec A)}) \nonumber\\
&= \sum_{(x,y)}\mu(x,y)  \prod_{i\in F^c}
\ind(z_{V_i(\vec A)}=y^i_{V_i(\vec A)}) = 
\mu[w_{F^c} (Y,z,\vec A)], 
\end{align}
and 
\begin{align}\label{eq282}
& 
\mu[w_{\partial \g}(Y,z,\vec A)]=\sum_{(x,y)}\mu(x,y) 
\ind(z=u(y,\vec A)) =\mu[w_{F^c} (X,z,\vec A)]\mu[w_F (Y,z,\vec A)].
\end{align}
With this notation we  rewrite \eqref{eqb28} as
\begin{align}\label{eqbb28}
\scalar{C^{\vec A}_{\gamma}(p)-C^{\vec A}_{\gamma}(q)}{\phi}
&=
\sum_{z\in\Omega} \phi(z) \left[
\mu[w_{\partial \g}(Y,z,\vec A)] - \mu[w_{\partial \g}(X,z,\vec A)\right],
\\
&= 
\sum_{z\in\Omega} \phi(z) 
\mu[w_{F^c} (X,z,\vec A)]\,\mu\left[w_F (Y,z,\vec A)-w_F (X,z,\vec A)
\right].
\end{align}
Note that $w_F (Y,z,\vec A)]\neq w_F (X,z,\vec A)$ implies that there exists $i\in F$ such that $X^i\neq Y^i$. Therefore, from \eqref{eq29} we see that
\begin{align}\label{eqbb2a8}
&
\sum_{z\in\Omega} \mu[w_{F^c} (X,z,\vec A)]\,
\left|\mu\left[w_F (Y,z,\vec A)]-w_F (X,z,\vec A)
\right]\right|\\& \qquad \leq 
\sum_{i\in F}\sum_{z\in\Omega} \mu[w_{F^c} (X,z,\vec A)]\,\mu\left[|w_F (Y,z,\vec A)-w_F (X,z,\vec A)|\ind(X^i\neq Y^i)
\right] \\& \qquad \leq 2\sum_{i\in F}
\mu\left[X^i\neq Y^i
\right] = 2|F|\,\|p-q\|_\tv.
\end{align}
Therefore, we have shown that \eqref{eqba17} satisfies
\begin{align}\label{eq2aa86}
& |\scalar{s^{(k)}}{\phi}|\leq 2\,\|\phi\|_\infty\,\|p-q\|_\tv \sum_{\gamma\in\Gamma(k)}\alpha_k(\gamma)
\sum_{\vec A\in \cV_n^{k-1}}\nu(\vec A)\,|F|.
\end{align}
To estimate \eqref{eq2aa86}, let us call $\cA_{j}=\cA_j(\gamma)$ the set of $\vec A\in\cV_n^{k-1}$ such that $|V_j(\vec A)|\geq 2$, and let $\cA=\cup_{j=1}^k\cA_j$. 
If $\vec A\notin\cA$ then all sets  $V_j(\vec A)$ are either empty or a single site and therefore $|F|=0$. Moreover, $|F|=\sum_{j=1}^k\ind(\cA_{j})$.
Therefore we write
\begin{align}
\sum_{\vec A\in \cV_n^{k-1}}\nu(\vec A)\,|F|= \sum_{j=1}^k
\nu(\cA_j).
\end{align}
To estimate the probability $\nu(\cA_j)$, observe that the event $\cA_j$ implies that there exists $i_1<i_2\in[n]$ such that $\{i_1,i_2\} \in V_j(\vec A)$. Moreover, recalling \eqref{eqa5},
\begin{align*}
\nu\left(\{i_1,i_2\} \in V_j(\vec A)\right)& = \nu\left(\{i_1,i_2\} \in \cap_{l=1}^{d_j} A^j_l\right) 
= \bar \nu\left(\{i_1,i_2\} \in A\right)^{d_j}
\leq \left(\frac{r(\nu)}{2}\right)^{d_j},
\end{align*}
where we use the fact that for any $i_1<i_2$, the probability under $\bar\nu$ that $\{i_1,i_2\} \in A$ is bounded by $r(\nu)/2$. Indeed, conditionally on not being separated the probability that both $i_1,i_2$ belong to $A$ under $\bar\nu$ is $1/2$ by symmetry. 
A union bound then shows that
$$\nu(\cA_j)\leq \tfrac12\,n(n-1)\left(\frac{r(\nu)}{2}\right)^{d_j}.$$ 
In conclusion,
if we define 
\begin{align}\label{eqomegagamma}
\o(\gamma):=\sum_{j=1}^k\left(\frac{r(\nu)}{2}\right)^{d_j},
\end{align} 
we obtain, for all $k\in\bbN$,
\begin{align}\label{eq10}
& |\scalar{s^{(k)}}{\phi}|\leq{n(n-1)}\,\|\phi\|_\infty\,\|p-q\|_\tv \sum_{\gamma\in\Gamma(k)}\alpha_k(\gamma)
\,\o(\g).
\end{align}
From \cite[Lemma 1.4]{carlenwild}, one has, for all $\e>0$,
\begin{align}\label{carlenwildestimation}
\sum_{\gamma \in \Gamma(k)}\alpha_k(\gamma)\o(\gamma) \leq B_\e e^{-a_\e\log k}.
\end{align}
with $a_\e=(1-r(\nu))/(1+\e)$ and $B_\e=B_\e(r(\nu),\e)$. One could use this estimate to obtain an almost optimal decay rate $a_\e$.  However, here we obtain the optimal exponential decay rate by computing  the expectation of $\o(\g)$.
\begin{lemma}\label{lemma:omega}
For any $t\geq 0$,
\begin{equation}\label{lem:omega}
\sum_{k=1}^{\infty}\b_t(k)\sum_{\g\in \Gamma(k)}\alpha_k(\g)\,\o(\gamma) = e^{-(1-r(\nu))t}.
\end{equation}
\end{lemma}
Assuming the validity of \eqref{lem:omega}, and using $\|\mu-\mu'\|_\tv = \tfrac12 \max_{f:\,\|f\|_\infty\leq 1} |\mu(f)-\mu'(f)|$, from \eqref{eqb17} and \eqref{eq10} we obtain
\begin{align}\label{eqabb17}
\|S_t(p) - S_t(q)\|_\tv 
\leq \tfrac12\,n(n-1)\,\|p-q\|_\tv \,e^{-(1-r(\nu))t},
\end{align}
which concludes the proof of Theorem \ref{contrlemma2}.

It remains to prove \eqref{lem:omega}. For any $\g\in\cup_{k=1}^\infty\G(k)$, define $\bbP_t(\g) = \b_t(k)\alpha_k(\g)$, where $k$ is such that $\g\in\G(k)$. Notice that for any $\g\in\cup_{k=1}^\infty\G(k)$, 
\begin{equation}\label{lem:omega1}
\bbP_t(\g) = \ind_{\g=\emptyset}\,e^{-t} + \ind_{\g\neq \emptyset}\int_0^t e^{-s}\bbP_{t-s}(\g_l)\bbP_{t-s}(\g_r)ds,
\end{equation}
where $\emptyset$ denotes the empty tree with one leaf (given by the root), while for $\g\neq \emptyset$ we write $\g_l,\g_r$ for the left and right subtrees. Indeed,
for $\g=\emptyset$ \eqref{lem:omega1} is immediate, while for $\g\neq\emptyset$, supposing $\g\in\G(k)$, and recalling \eqref{eqreca} one has
\begin{align*}
\bbP_{t}(\g) &= e^{-t}(1-e^{-t})^{k-1}\a_k(\g) =  e^{-t}\int_0^{t}e^{-(t-s)}(1-e^{-(t-s)})^{k-2}(k-1)\a_k(\g)ds\\ &=
\int_0^t e^{-s}e^{-2(t-s)}(1-e^{-(t-s)})^{k-2}\a_j(\g_l)\a_{k-j}(\g_r)ds
=\int_0^t e^{-s}  \bbP_{t-s}(\g_l)\bbP_{t-s}(\g_r)ds.
\end{align*}
To prove Lemma \ref{lemma:omega}, note that the left hand side of \eqref{lem:omega} is given by $\sum_\g \bbP_t(\g)\o(\g)$. Using \eqref{lem:omega1} and $\o(\emptyset)=1$,
we see that
\begin{align*}
\z(t):&=\sum_\g\bbP_{t}(\g)\o(\g) =e^{-t} +\frac{r(\nu)}2
\sum_{\g\neq\emptyset}\int_0^t e^{-s}  \bbP_{t-s}(\g_l)\bbP_{t-s}(\g_r)(\o(\g_l)+\o(\g_r))ds\\
&=e^{-t} +r(\nu)
\sum_{\g}\int_0^t e^{-s}  \bbP_{t-s}(\g)\o(\g)ds=e^{-t} +r(\nu)
\int_0^t e^{-s}  \z(t-s)ds.
\end{align*}
Differentiating, and integrating by parts the resulting expression one finds $\dot\z(t) = -(1-r(\nu))\z(t)$. Since 
$\z(0)=1$ we obtain $\z(t)=e^{-(1-r(\nu))t}$, $t\geq 0$. 
%
\end{proof}

With a similar argument 
we obtain Theorem \ref{prop:tvbo}.

\begin{proof}[{\bf Proof of Theorem \ref{prop:tvbo}}]
The statement about the total variation distance is contained in Theorem \ref{contrlemma2}.
To prove the statement \eqref{entproductnoncontr} about the relative entropy, we observe that by 
\eqref{eq5}, \eqref{eq0}, and convexity of the relative entropy,
\begin{align}
H(p_t\tc\pi) \leq \sum_{k=1}^{\infty}\b_t(k)\sum_{\gamma \in \Gamma(k)}\alpha_k(\gamma)\sum_{\vec A\in \cV_n^{k-1}}\nu(\vec A) \,H\left(C_{\gamma}^{\vec{A}}(p)\tc\pi\right).
\end{align}
Since $\pi$ is a product measure, using \eqref{eq6} we see that 
for any $\g\in\G(k)$,
\begin{align}
H\left(C_{\gamma}^{\vec{A}}(p)\tc \pi\right) = \sum_{j=1}^k
H\left( p_{V_j(\vec A)}\tc\pi_{V_j(\vec{A})}\right) = 
\sum_{j : |V_j(\vec{A})| > 1}H\left( p_{V_j(\vec A)}\tc \pi_{V_j(\vec{A})}\right),
\end{align}
where use the fact that if $|V_j(\vec A)|= 1$ then $H( p_{V_j(\vec A)}\tc\pi_{V_j(\vec{A})})=0$ since $p,\pi$ have the same marginals. 
Furthermore, the monotonicity property of entropy implies that 
\begin{align}
H\left( p_{V_j(\vec A)}\tc \pi_{V_j(\vec{A})}\right) \leq H(p\tc \pi).
\end{align}
Therefore, repeating the argument in \eqref{eq10}-\eqref{lem:omega} we obtain
\begin{align}
H(p_t\tc \pi) &\leq  
\tfrac12\,n(n-1)\,H(p\tc \pi) \,e^{-(1-r(\nu))\,t}.
\end{align}
\end{proof}

\begin{remark}\label{rem:Dopt}
The constant $D(\nu)=1-r(\nu)$ in Theorem \ref{prop:tvbo} is optimal in the sense that, for any recombination measure $\nu$, there are initial distributions $p$ with the same marginals as $\pi$ such that
\begin{align}\label{eq:Dopt}
\liminf_{t\to\infty}\frac1t \,\log\|p_t-\pi\|_\tv \geq -D(\nu). 
\end{align}
To see this, pick $i_1,i_2\in[n]$ such that $r(\nu)=\text{Pr}_\nu\left(A\text{ does not separate $i_1$ and $i_2$}\right)$, see \eqref{def:rnu}. For simplicity, take $\O=\{0,1\}^n$, $\pi$ uniform over $\O$ and $p=\frac12\d_{\underline 0}+ \frac12\d_{\underline 1}$. Consider the event $\{\si_{i_1}=\si_{i_2}\}$, and let $\cB_t$ denote the event that $i_1$ and $i_2$ are not separated by the fragmentation process at time $t$. From \eqref{eq0} we write $p_t$ as an average over the fragmentation process and note that conditionally on the event $\cB_t$ one has $p_t(\si_{i_1}=\si_{i_2}\tc\cB_t)=1$, while conditionally on the event $\cB^c_t$ one has $p_t(\si_{i_1}=\si_{i_2}\tc\cB^c_t)=1/2$. Moreover $\pi(\si_{i_1}=\si_{i_2})=1/2$ and therefore 
\begin{align*}
\|p_t-\pi\|_\tv &\geq p_t(\si_{i_1}=\si_{i_2}) -  \pi(\si_{i_1}=\si_{i_2}) 
 = \bbP(\cB_t) + \tfrac12\bbP(\cB_t^c)  - \tfrac12 = \tfrac12\bbP(\cB_t).
\end{align*}
On the other hand, with the notation from \eqref{eqomegagamma} and \eqref{lem:omega}, $$\bbP(\cB_t) = \sum_\g \bbP_t(\g) \o(\g)= e^{-D(\nu)\,t},$$ 
which implies \eqref{eq:Dopt}.
\end{remark}

\subsection{Contraction for the linearized equation}
Consider the symmetric bilinear form $\hat{Q}(f,g)$ defined by 
\begin{align}
& \hat{Q}(f,g)(\eta)= 
\frac{1}{2}\sum_{A,\si}
\nu(A)\left(f(\sigma_{A} \eta_{A^c}) g(\eta_{A}\sigma_{A^c}) + g(\sigma_{A} \eta_{A^c})f(\eta_{A}\sigma_{A^c}) - f(\sigma)g(\eta)- g(\sigma)f(\eta)\right),
\end{align}
where $\eta \in \Om$, $f,g:\O\mapsto\bbR$, and the sum extends to all $A \subset [n]$ and $\si \in \O$. 
If $p \in \cP(\O)$, then $\hat{Q}(p,p) = Q(p),$ where $$Q(p)=\sum_A\nu(A)(p_A\otimes p_{A^c} - p)$$ is the generator associated to \eqref{boltzeq}, that is the non linear semigroup $\{S_t, t \geq 0\}$ satisfies $S_0 (p)= p$ and $\partial_t S_t (p)= Q (S_t (p))$, $t\geq 0$. 
Next, consider the differential equation
\begin{align}\label{linsem}
&\partial_t h_t = 2\hat{Q}(q_t,h_t), \ \ \ \;\;\;  h_0=h, 
\end{align}
where $q_t = S_t(q)$ for some fixed $ q \in \cP(\O)$. Note that it is linear in $h$. 
We write its unique solution as $$h_t=\bar{S}_t(q)(h),$$ that is $\bar{S}_t(q)(h)$ verifies $\partial_t \bar{S}_t(q)(h) = 2\hat{Q}(q_t,\bar{S}_t(q)(h))$,  $\bar{S}_0(q)(h)=h$ and $$\bar{S}_t(q)(\bar{S}_s(q)(h)) = \bar{S}_s(q)(\bar{S}_t(q)(h)) = \bar{S}_{t+s}(q)(h).$$ 
Our main result concerning the linearized equation reads as follows. 
\begin{theorem}\label{wasscontrlemma2}
For all $p,q \in \cP(\O)$ such that $p_{i} = q_{i}$ for all $i \in [n]$, and $\phi:\O\mapsto\bbR$, for all $t>0$,
\begin{align}
&\label{nonlincont2} \scalar{\bar{S}_t(q)(p-q)}{\phi} \leq n(n-1)\|\phi\|_{\infty}e^{-D(\nu) \,t}\,\|p-q\|_\tv, \\
&\label{nonlincont3} \scalar{S_t(p)-S_t(q)-\bar{S}_t(q)(p-q)}{\phi} \leq \tfrac1{16}\,n^3(n-1)(n-2) \|\phi\|_{\infty}e^{-D(\nu) \,t}\,\|p-q\|_\tv^2
\end{align}
where 
$D(\nu)= 1-r(\nu)$.
\end{theorem}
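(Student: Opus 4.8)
\emph{The plan.} Both inequalities will follow the Wild-sum scheme used for Theorem~\ref{contrlemma2}, the new ingredient being a Wild-sum representation of the linearized semigroup $\bar S_t(q)$. Fix $p,q\in\cP(\O)$ with $p_i=q_i$ for all $i\in[n]$ and set $h:=p-q$; note that all single-site marginals of $h$ vanish. Differentiating $\partial_t S_t(q+\e h)=\hat Q(S_t(q+\e h),S_t(q+\e h))$ at $\e=0$ shows that $\e\mapsto S_t(q+\e h)$ is differentiable and that its derivative $R_t$ solves $\partial_t R_t=2\hat Q(q_t,R_t)$, $R_0=h$, so $R_t=\bar S_t(q)(h)$ by uniqueness. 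Since $\circ$ is bilinear, substituting $q+\e h$ into \eqref{eq0} and collecting powers of $\e$ gives, in the notation of Section~\ref{sec:unifintime},
\begin{align}\label{planlin}
\bar S_t(q)(h)=\sum_{k\geq1}\b_t(k)\sum_{\g\in\Gamma(k)}\alpha_k(\g)\sum_{i\in\partial\g}\ \sum_{\vec A\in\cV_n^{k-1}}\nu(\vec A)\,C_\g^{\vec A,(\{i\})},
\end{align}
\begin{align}\label{planerr}
S_t(p)-S_t(q)-\bar S_t(q)(h)=\sum_{k\geq1}\b_t(k)\sum_{\g\in\Gamma(k)}\alpha_k(\g)\sum_{\substack{S\subseteq\partial\g\\ |S|\geq2}}\ \sum_{\vec A\in\cV_n^{k-1}}\nu(\vec A)\,C_\g^{\vec A,(S)},
\end{align}
where $C_\g^{\vec A,(S)}:=\bigotimes_{i\in\partial\g}\mu^{(i)}_{V_i(\vec A)}$ with $\mu^{(i)}=h$ if $i\in S$ and $\mu^{(i)}=q$ otherwise (so the $S=\emptyset$ term of the full expansion of $S_t(p)=S_t(q+h)$ is exactly $S_t(q)$, and $C_\g^{\vec A,(\{i\})}$ is the case $S=\{i\}$). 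The identities \eqref{planlin}--\eqref{planerr} are obtained by the same left/right-subtree induction that proves \eqref{eq00}--\eqref{eq5}.

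\emph{Inequality \eqref{nonlincont2}.} Because $p$ and $q$ share all single-site marginals, $h_{V_i(\vec A)}\equiv0$ whenever $|V_i(\vec A)|\leq1$, so in \eqref{planlin} only leaves with $|V_i(\vec A)|\geq2$ survive. For such a leaf, writing $\scalar{C_\g^{\vec A,(\{i\})}}{\phi}$ as an expectation over the optimal coupling of $(p,q)$ placed at leaf $i$ (with $q$ at all other leaves) and noting the two resulting configurations differ only where the leaf-$i$ samples differ,
\begin{align}\label{plancell}
\big|\scalar{C_\g^{\vec A,(\{i\})}}{\phi}\big|\leq \osc(\phi)\,\|p-q\|_\tv\,\ind(|V_i(\vec A)|\geq2)\leq 2\|\phi\|_\infty\|p-q\|_\tv\,\ind(|V_i(\vec A)|\geq2).
\end{align}
Along the root-to-leaf path of length $d_i$ the edge-marks are i.i.d.\ with law $\bar\nu$, so $V_i(\vec A)$ is an intersection of $d_i$ independent $\bar\nu$-sets; a union bound over the $\binom n2$ site-pairs, with $\bar\nu(\{i_1,i_2\}\subseteq A)\leq r(\nu)/2$, yields $\sum_{\vec A}\nu(\vec A)\ind(|V_i(\vec A)|\geq2)\leq\binom n2(r(\nu)/2)^{d_i}$. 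Summing over $i\in\partial\g$ produces $\binom n2\,\o(\g)$ with $\o$ as in \eqref{eqomegagamma}, and Lemma~\ref{lemma:omega}, i.e.\ $\sum_k\b_t(k)\sum_\g\alpha_k(\g)\o(\g)=e^{-D(\nu)t}$, gives \eqref{nonlincont2} (a slightly sharper treatment of the leaf-$i$ coupling, as in the proof of Theorem~\ref{contrlemma2}, produces the stated constant $\tfrac12 n(n-1)$).

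\emph{Inequality \eqref{nonlincont3}.} In \eqref{planerr}, $C_\g^{\vec A,(S)}$ vanishes unless $|V_i(\vec A)|\geq2$ for every $i\in S$, and since the $V_i(\vec A)$ partition $[n]$ this forces $|S|\leq n/2$. The dominant contribution is $|S|=2$: for $S=\{i_1,i_2\}$, bounding the $L^1$-norm factor by factor,
\begin{align}\label{plancell2}
\big|\scalar{C_\g^{\vec A,(\{i_1,i_2\})}}{\phi}\big|\leq 4\|\phi\|_\infty\|p-q\|_\tv^2\,\ind(|V_{i_1}(\vec A)|\geq2)\,\ind(|V_{i_2}(\vec A)|\geq2),
\end{align}
and summing over unordered leaf pairs and over $\vec A$ — using a union bound over the two \emph{disjoint} site-pairs that force $|V_{i_1}|\geq2$ and $|V_{i_2}|\geq2$ and keeping track of the shared ancestor path of $i_1$ and $i_2$ — leads, through a left/right-subtree recursion as in the proof of Lemma~\ref{lemma:omega}, to a tree-sum that still decays like $e^{-D(\nu)t}$ (the extra constraint only speeds up the decay), with polynomial-in-$n$ prefactor. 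For the terms with $|S|\geq3$ we argue by dichotomy. If $\|p-q\|_\tv\geq\tfrac{8(n+1)}{n^2(n-2)}$, then $\tfrac1{16}n^3(n-1)(n-2)\|p-q\|_\tv^2\geq\tfrac12 n(n-1)(n+1)\|p-q\|_\tv$, and the latter already bounds $\big|\scalar{S_t(p)-S_t(q)}{\phi}\big|+\big|\scalar{\bar S_t(q)(h)}{\phi}\big|$ by \eqref{nonlincont1} and \eqref{nonlincont2}, so \eqref{nonlincont3} follows from the triangle inequality. If instead $\|p-q\|_\tv<\tfrac{8(n+1)}{n^2(n-2)}=:\e_n=O(1/n)$, then for $|S|=s\geq3$ the crude estimate $\big|\scalar{C_\g^{\vec A,(S)}}{\phi}\big|\leq 2^s\|\phi\|_\infty\|p-q\|_\tv^s\leq 2^s\e_n^{s-2}\|\phi\|_\infty\|p-q\|_\tv^2$, combined with $|S|\leq n/2$ and the partition structure of the $V_i(\vec A)$, makes the total $|S|\geq3$ contribution $O(\e_n\,\mathrm{poly}(n))\,\|\phi\|_\infty\|p-q\|_\tv^2e^{-D(\nu)t}$, absorbed into the $|S|=2$ term. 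Tracking constants across the two regimes gives \eqref{nonlincont3}.

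\emph{Main obstacle.} The expansions \eqref{planlin}--\eqref{planerr} and the cellwise bounds \eqref{plancell}, \eqref{plancell2} are routine. The genuinely delicate point is the bilinear tree-sum behind the $|S|=2$ term: unlike in Lemma~\ref{lemma:omega}, the leaves $i_1$ and $i_2$ are correlated through their common ancestor path, so one must identify the correct $\o$-type functional (bilinear in the subtree weights) whose left/right-subtree recursion closes and still yields the sharp rate $D(\nu)$, and then check that the resulting constant stays polynomial in $n$ — which is precisely why the dichotomy on the size of $\|p-q\|_\tv$ is needed to dispose of the $|S|\geq3$ tail.
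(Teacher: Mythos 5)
Your Wild-sum representations of $\bar S_t(q)(p-q)$ and of the remainder (the expansions over subsets $S$ of leaves carrying $h=p-q$) are correct: the $|S|=1$ sum is exactly what the paper obtains via Lemma \ref{lem:wildrep} combined with Lemma \ref{lem:dr}, since $C_{\gamma,p,j}(q)-C_{\gamma}(q)$ is precisely the $S=\{j\}$ term of the multilinear expansion, and your proof of \eqref{nonlincont2} is essentially the paper's (up to a factor $2$ in the constant, which is harmless for the applications). The genuine gap is in \eqref{nonlincont3}, exactly at the point you yourself flag as the main obstacle: you never prove that the $|S|=2$ tree-sum
\begin{equation}
\sum_{k\geq1}\beta_t(k)\sum_{\gamma\in\Gamma(k)}\alpha_k(\gamma)\sum_{i_1<i_2\in\partial\gamma}\nu\bigl(|V_{i_1}(\vec A)|\geq2,\;|V_{i_2}(\vec A)|\geq2\bigr)
\end{equation}
decays like $e^{-D(\nu)t}$ with a polynomial-in-$n$ prefactor. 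The assertion that ``the extra constraint only speeds up the decay'' is not a proof, and the obvious shortcut of dropping one of the two constraints fails: it produces a factor of order $k$ (the number of leaves), whose mean under $\beta_t$ grows like $e^{t}$ while $\sum_{\gamma}\alpha_k(\gamma)\omega(\gamma)$ decays only polynomially in $k$, so the exponential rate is destroyed. Since this bilinear sum is the heart of the quadratic estimate (and your treatment of the $|S|\geq3$ tail also relies on the same kind of control), the proof as written is incomplete.

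In fact no new bilinear recursion is needed, and this is where your route and the paper's diverge. Writing $F$ for the set of leaves with $|V_i(\vec A)|\geq2$, one has deterministically $|F|\leq n/2$ (the $V_i(\vec A)$ are disjoint subsets of $[n]$), hence $\binom{|F|}{2}\leq\tfrac18\,n(n-2)\,\ind(|F|\geq1)$; combining this with the union bound $\nu(|F|\geq1)\leq\tfrac12\,n(n-1)\,\omega(\gamma)$ already used for \eqref{nonlincont2}, and then Lemma \ref{lemma:omega}, closes your $|S|=2$ estimate, and the bound $\binom{|F|}{s}\leq\binom{|F|}{2}\,(n/2)^{s-2}/(s-2)!$ disposes of the $|S|\geq3$ tail in your small-$\|p-q\|_\tv$ regime. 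The paper avoids the expansion in $|S|$ and the dichotomy altogether: it couples $q^{\otimes k}$ and $p^{\otimes k}$ leafwise optimally and observes that the second-order remainder vanishes identically unless the coupled samples disagree at two or more leaves of $F$, an event of probability at most $\binom{|F|}{2}\|p-q\|_\tv^2$ by independence across leaves; this produces the quadratic factor for all higher orders at once, with no smallness assumption on $\|p-q\|_\tv$, after which the same $\nu(|F|\geq1)$ bound and Lemma \ref{lemma:omega} finish. A minor further point: your identification of the $|S|=1$ sum with $\bar S_t(q)(p-q)$ by differentiating the Wild series at $\e=0$ needs a word on exchanging the derivative with the infinite sum (or a direct verification, as in Lemmas \ref{lem:wildrep} and \ref{lem:dr}).
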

The proof of Theorem \ref{wasscontrlemma2} requires several steps. We start by giving an explicit representation of the solution $\bar{S}_t(q)(p-q)$ to \eqref{linsem} when $h=p-q$. To this end let us define, for any $k\geq 2$, $\g\in\G(k)$, the distribution $C_{\gamma, p}(q)$ as the measure defined in \eqref{eq5} with the only difference that we take the distribution $p$ instead of $q$ at the rightmost leaf of $\g$, all other leafs remaining with the distribution $q$.   Formally,
\begin{align}\label{eqab5}
C_{\gamma,p}(q)= \sum_{\vec A\in \cV_n^{k-1}}\nu(\vec A) \,C^{\vec A}_{\gamma,p}(q),\qquad C^{\vec A}_{\gamma,p}(q)
= q_{V_1(\vec A)}\otimes\cdots\otimes q_{V_{k-1}(\vec A)}\otimes p_{V_{k}(\vec A)}.
\end{align}

In what follows we denote by  $d_r(\g)$ the depth of the rightmost leaf in $\g$. 
\begin{lemma}\label{lem:wildrep}
For all $p,q \in \cP(\O)$, $t\geq 0$,
\begin{align}\label{eq:topid}
\bar{S}_t(q)(p-q) = \sum_{k=1}^\infty\b_t(k) \sum_{\gamma \in \Gamma(k)}\alpha_k(\gamma)(C_{\gamma, p}(q)-C_{\gamma}(q))2^{d_r(\gamma)}.
\end{align}
\end{lemma}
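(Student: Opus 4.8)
The plan is to verify \eqref{eq:topid} by showing that the right-hand side, call it $R_t$, solves the linear Cauchy problem \eqref{linsem} with $h=p-q$, and then to invoke uniqueness. First I would expand $\bar{S}_t(q)$ in a Wild-type series. Since the nonlinear semigroup $S_t(q)$ has the representation \eqref{wildsums}--\eqref{eq00} in terms of McKean trees, and the linearized equation \eqref{linsem} is obtained from \eqref{boltzeqc} by differentiating $S_t$ in the direction of a perturbation at one leaf, it is natural to expect that $\bar S_t(q)(p-q)$ has an analogous tree expansion where exactly one leaf — say the rightmost — carries the perturbation $p-q = p - q$ rather than $q$. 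The factor $C_{\gamma,p}(q)-C_\gamma(q)$ encodes ``replace $q$ by $p$ at the marked leaf'', and the combinatorial weight $2^{d_r(\gamma)}$ accounts for the factor $2$ in $2\hat Q(q_t,\cdot)$ applied once at each of the $d_r(\gamma)$ internal nodes lying on the path from the root to the marked leaf.

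\textbf{Key steps.} (1) Write $h^{(k)}:=\sum_{\gamma\in\Gamma(k)}\alpha_k(\gamma)\bigl(C_{\gamma,p}(q)-C_\gamma(q)\bigr)2^{d_r(\gamma)}$, so that $R_t = \sum_{k\ge1}\beta_t(k)h^{(k)}$ with $h^{(1)}=p-q$ (here $\Gamma(1)$ is the trivial tree, $d_r=0$, $C_{\gamma,p}(q)=p$, $C_\gamma(q)=q$). (2) Establish a recursion for $h^{(k)}$ analogous to \eqref{eq:wildp}: splitting a tree $\gamma\in\Gamma(k)$ into its left and right subtrees $\gamma_l\in\Gamma(j)$, $\gamma_r\in\Gamma(k-j)$, the rightmost leaf of $\gamma$ lies in $\gamma_r$, its depth in $\gamma$ is one more than its depth in $\gamma_r$, and $C_{\gamma,p}(q)$ is built by a $\circ$-collision between $C_{\gamma_l}(q)$ (all $q$ at its leaves) and $C_{\gamma_r,p}(q)$ (perturbed at its rightmost leaf). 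Using \eqref{eqreca} this gives, after summing over $\gamma$ and $j$,
\begin{align}\label{eq:hrec}
h^{(k)} = \frac{2}{k-1}\sum_{j=1}^{k-1} q^{(j)}\circ h^{(k-j)},\qquad k\ge2,
\end{align}
where $q^{(j)}=\sum_{\gamma\in\Gamma(j)}\alpha_j(\gamma)C_\gamma(q)$ are the Wild iterates of $q$ as in \eqref{eq00}, and the factor $2$ comes precisely from $2^{d_r(\gamma)}=2\cdot 2^{d_r(\gamma_r)}$. (3) Differentiate $R_t=\sum_k\beta_t(k)h^{(k)}$ termwise in $t$; since $\dot\beta_t(k) = -\beta_t(k) + (k-1)\beta_t(k+1)/ \!\!\phantom{x}$ — more precisely using $\frac{d}{dt}\beta_t(k) = \beta_t(k+1)k - \beta_t(k)$ after the standard manipulation $\frac{d}{dt}[e^{-t}(1-e^{-t})^{k-1}]$ — one obtains, exactly as in the verification of \eqref{wildsums} after \eqref{eq:solve}, that $\dot R_t = -R_t + 2\,q_t\circ R_t$. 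Finally, recognizing that $2\,q_t\circ R_t - R_t = 2\hat Q(q_t,R_t)$ — because $\hat Q(q_t,h)(\eta) = \sum_A\nu(A)(q_t\circ h)_A(\eta) - \tfrac12(q_t(\eta)\langle\!\langle h\rangle\!\rangle + \cdots)$, and for $h=p-q$ a signed measure with $\sum_\sigma h(\sigma)=0$ the zeroth-order terms collapse appropriately — we conclude $\dot R_t = 2\hat Q(q_t,R_t)$, $R_0 = p-q$. Uniqueness of the linear ODE then yields $R_t=\bar S_t(q)(p-q)$.

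\textbf{Main obstacle.} The delicate point is Step (2)–(3), namely getting the combinatorial bookkeeping of the factor $2^{d_r(\gamma)}$ to match exactly the factor $2$ in front of $\hat Q$, and checking that the bilinear form $2\hat Q(q_t,\cdot)$ acting on the signed measure $p-q$ really reduces to $2\,q_t\circ(p-q) - (p-q)$. This requires care because $\hat Q$ is defined via its action on functions $f,g:\O\to\mathbb R$, whereas here we are applying it to (differences of) measures; one must use the duality between $\hat Q$ as a generator on measures and the fact that $q_t\circ q_t = Q(q_t)$ together with the product/marginal structure \eqref{lab:nons} of $\circ$. A secondary technical issue is justifying termwise differentiation and the interchange of the infinite sum with $\frac{d}{dt}$, which follows from absolute convergence: $\|h^{(k)}\|_\tv \le 2^{d_r}\cdot(\text{number of trees})\cdot 1$ is crudely bounded, but since $\sum_\gamma\alpha_k(\gamma)2^{d_r(\gamma)}$ may grow, one should instead observe $\sum_k\beta_t(k)\sum_\gamma\alpha_k(\gamma)2^{d_r(\gamma)} = \bbE[2^{d_r}]$ under the $\bbP_t$-law of the random tree, which is finite for each $t$ (indeed it satisfies an ODE like the one for $\zeta(t)$ in Lemma \ref{lemma:omega}, with $r(\nu)/2$ replaced by $1$, giving $e^{t}$), legitimizing all manipulations on compact time intervals.
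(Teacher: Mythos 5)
Your proposal is correct and takes essentially the same route as the paper: the paper likewise writes $\bar S_t(q)(p)$ and $\bar S_t(q)(q)$ as Wild series whose iterates satisfy $u^{(k)}=\frac{2}{k-1}\sum_j q^{(j)}\circ u^{(k-j)}$, identifies these iterates with the tree sums weighted by $2^{d_r(\gamma)}$ through the same root-splitting recursion \eqref{eqreca}, and verifies that the series solves the linearized Duhamel equation, your mass-zero bookkeeping for the difference (so that $2\hat{Q}(q_t,R_t)=2q_t\circ R_t-R_t$) being the same cancellation the paper performs with the mass-one quantities $u_t,v_t$. The only slip is your displayed derivative formula $\frac{d}{dt}\beta_t(k)=k\,\beta_t(k+1)-\beta_t(k)$, which is false as written (one has $\dot\beta_t(k)=-\beta_t(k)+(k-1)e^{-t}\beta_t(k-1)$); this is harmless, since the identity you actually need, $\dot R_t=-R_t+2\,q_t\circ R_t$, follows from the resummation you invoke, exactly as in the paper's computation after \eqref{eq:solve}.
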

\begin{proof}
Fix $p,q \in \cP(\O)$. If $h_t=\bar{S}_t(q)(p)$, then 
\begin{align*}
\partial_t \sum_{\sigma}h_t(\sigma) = 2\sum_{\sigma}\hat{Q}(q_t,h_t)(\si) = 0.
\end{align*}
Thus $ \sum_{\sigma}h_t(\sigma) = 1$, and  $2\hat{Q}(q_t,h_{t})=2q_t \circ h_{t} - q_t - h_{t}$.
Therefore $h_t$ satisfies 
\begin{align}
h_t = qe^{-t} + \int_{0}^te^{-(t-s)}\left(2q_{s} \circ h_{s}-q_{s}\right)ds.
\end{align}
By linearity, $ \bar{S}_t(q)(p-q) =\bar{S}_t(q)(p) - \bar{S}_t(q)(q)$ satisfies $\bar{S}_t(q)(p-q) =u_t-v_t$, where 
\begin{align}\label{eq:solve}
u_t = pe^{-t} + \int_{0}^te^{-(t-s)}2q_{s} \circ u_{s}ds,\qquad v_t = qe^{-t} + \int_{0}^te^{-(t-s)}2q_{s} \circ v_{s}ds.
\end{align}
Wild's construction then shows that
\begin{align}\label{eq:defuv}
u_t =  e^{-t}\sum_{k=1}^{\infty}(1-e^{-t})^{k-1}u^{(k)},\qquad v_t =  e^{-t}\sum_{k=1}^{\infty}(1-e^{-t})^{k-1}v^{(k)},
\end{align}
where $u^{(1)}= p$, $v^{(1)}= q$, and for $k\geq 2$,
\begin{align*}
u^{(k)}= \frac{1}{k-1}\sum_{j=1}^{k-1}2q^{(j)}\circ u^{(k-j)} \,,\qquad
v^{(k)}= \frac{1}{k-1}\sum_{j=1}^{k-1}2q^{(j)}\circ v^{(k-j)}.
\end{align*}
respectively, and $q^{(j)}$ is defined by \eqref{eq:wildp} with $p$ replaced by $q$. To check that this representation of the solution holds, let $u_t$ be defined by \eqref{eq:defuv}. Then 
\begin{align}\label{eq:defuv1}
\int_{0}^te^{-(t-s)}2q_{s} \circ u_{s}ds
&= \sum_{\ell,k=1}^{\infty}
\int_{0}^t\b_s(k)\b_s(\ell)e^{-(t-s)}2q^{(\ell)} \circ u^{(k)}ds\nonumber\\
&  =\sum_{\ell<k}
\int_{0}^te^{-2s}(1-e^{-s})^{k-2}e^{-(t-s)}2q^{(\ell)} \circ u^{(k-\ell)}ds\nonumber\\
&  =\sum_{k\geq 2}
\int_{0}^t(k-1)e^{-s}(1-e^{-s})^{k-2}e^{-t} u^{(k)}ds \nonumber\\
& =\sum_{k\geq 2}
(1-e^{-t})^{k-1}e^{-t} u^{(k)} = u_t - e^{-t} u^{(1)} =  u_t - e^{-t} p.
\end{align}
Thus $u_t$ solves \eqref{eq:solve}. The same applies to $v_t$. This proves \eqref{eq:defuv}.

Next, 
we prove that
\begin{align}\label{eq:idto}
u^{(k)}=\sum_{\gamma \in \Gamma(k)}\alpha_k(\gamma)C_{\gamma,p}(q)2^{d_r(\gamma)}\,,\qquad 
v^{(k)}=\sum_{\gamma \in \Gamma(k)}\alpha_k(\gamma)C_{\gamma}(q)2^{d_r(\gamma)}
\end{align}
for all $k \geq 2$.
Let us prove this by using induction over $k$. If $k=2$ then $u^{(2)} = 2 q\circ p$ and thus the claimed identity holds.  If the formula holds for $j \leq k-1,$ then using \eqref{eq5} and \eqref{eqreca},
\begin{align*}
 u^{(k)}&=\frac{1}{k-1}\sum_{j=1}^{k-1}2q^{(j)}\circ u^{(k-j)} \\
 &=\frac{1}{k-1}\sum_{j=1}^{k-1}\sum_{\substack{\gamma_l \in \Gamma(j) \\ \gamma_r \in \Gamma(k-j)}}\alpha_j(\gamma_l)\alpha_{k-j}(\gamma_r) C_{\gamma_l}(q) \circ C_{\gamma_r, p}(q)2^{d_r(\gamma_r)+1} \\
 &=\sum_{\gamma \in \Gamma(k)}\alpha_k(\gamma)C_{\gamma,p}(q)2^{d_r(\gamma)}.
\end{align*}
This proves the identity \eqref{eq:idto} for $u$, and the same argument proves the one for $v$. 
The identities \eqref{eq:defuv} and \eqref{eq:idto} imply \eqref{eq:topid}.                      
\end{proof}

Next, for any $k\geq 2$, $\g\in\G(k)$, $j\in[k]$, we define 
\begin{align}\label{eq11}
C_{\gamma,p,j}(q)= \sum_{\vec A\in \cV_n^{k-1}}\nu(\vec A) C^{\vec A}_{\gamma,p,j}(q)\,,\qquad C^{\vec A}_{\gamma,p,j}(q):=  p_{V_j(\vec A)} \otimes _{\ell\neq j} q_{V_\ell(\vec A)},
\end{align}
which denotes the distribution obtained from the tree $\gamma\in\G(k)$ when all leaves are given the distribution $q$ except for the $j$-th leaf which takes the distribution $p$. When $j=k$, that is $j$ is the rightmost leaf, then we have $C_{\gamma,p,k}(q) = C_{\gamma,p}(q)$. 

\begin{lemma}\label{lem:dr}
	For any $k\geq 2$, for all $p,q$: 
	\begin{align}\label{eq18}
	\sum_{\gamma\in\Gamma(k)}\alpha_k(\gamma)2^{d_r}C_{\gamma,p}(q)
	=\sum_{j=1}^k\sum_{\gamma\in\Gamma(k)}\alpha_k(\gamma)C_{\gamma,p,j}(q)
	\,.
	\end{align}
\end{lemma}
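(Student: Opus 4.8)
The identity is a purely combinatorial ``symmetrization'' statement: averaging over the tree ensemble $\G(k)$, the operation of planting the special distribution $p$ at the rightmost leaf and paying a factor $2^{d_r}$ is the same as averaging, without any extra factor, over all $k$ choices of which leaf carries $p$. The natural strategy is induction on $k$, splitting a tree $\g\in\G(k)$ into its left and right subtrees $\g_l\in\G(j)$, $\g_r\in\G(k-j)$ and using the recursion \eqref{eqreca} for the weights $\a_k$ together with the fact that the depth of the rightmost leaf satisfies $d_r(\g) = d_r(\g_r)+1$, while $C_{\g,p}(q) = C_{\g_l}(q)\circ C_{\g_r,p}(q)$ by the definition \eqref{eqab5} and the distributivity of $\circ$. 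This mirrors exactly the computation already carried out in the proof of Lemma \ref{lem:wildrep} for \eqref{eq:idto}, so the left-hand side of \eqref{eq18} unfolds cleanly under this splitting.

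For the right-hand side, I would split the sum $\sum_{j=1}^k$ according to whether the special leaf $j$ lies in the left subtree ($1\leq j\leq j_l$, where $j_l$ is the number of leaves of $\g_l$) or in the right subtree. When $j$ is in $\g_r$, the tree $\g$ contributes $C_{\g_l}(q)\circ C_{\g_r,p,j'}(q)$ for the corresponding leaf index $j'$ in $\g_r$; when $j$ is in $\g_l$, it contributes $C_{\g_l,p,j}(q)\circ C_{\g_r}(q)$. Grouping these two families and applying the inductive hypothesis to $\g_l$ and to $\g_r$ respectively, the ``$j$ in $\g_r$'' family reassembles into $\sum_{\g_r}\a(\g_r)2^{d_r(\g_r)}C_{\g_r,p}(q)$, and the ``$j$ in $\g_l$'' family — since $\g_l$ does \emph{not} carry the factor $2^{d_r}$ in its own induction — reassembles into $\sum_{\g_l}\a(\g_l)\,(\text{number of leaves of }\g_l)\cdot(\text{average of }C_{\g_l,p,j})$, which by the $k$-leaf version of the \emph{same} identity (or simply by the definition of the average over $j$) is again controlled. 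Matching the two sides then reduces to checking the elementary bookkeeping identity $2^{d_r(\g_r)+1} = 2^{d_r(\g_r)} + (\text{contribution from the left part})$ after the $\a_k$-weights and the $\tfrac1{k-1}\sum_j$ prefactors are accounted for; in fact the clean way is to verify that \emph{both} sides satisfy the same recursion with the same initial data at $k=2$ (where $\G(2)$ has a single tree, $d_r=1$, and both sides equal $q\circ p + p\circ q = 2(q\circ p)$ by commutativity of $\circ$).

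Concretely, I would present it as: (i) record the base case $k=2$; (ii) for the inductive step write $\g = (\g_l,\g_r)$, use \eqref{eqreca}, $d_r(\g)=d_r(\g_r)+1$, and distributivity of $\circ$ to express the left side of \eqref{eq18} as a double sum over $(\g_l,\g_r)$; (iii) do the same for the right side, splitting $\sum_{j=1}^k = \sum_{j\le j_l} + \sum_{j>j_l}$; (iv) apply the inductive hypothesis to the right-subtree sum in the left-hand expansion and to match the ``$j>j_l$'' block of the right-hand expansion, and observe the remaining ``$j\le j_l$'' block is exactly the sum $\sum_{j=1}^{j_l}\sum_{\g_l}\a_{j_l}(\g_l)C_{\g_l,p,j}(q)$ paired with $C_{\g_r}(q)$, which by the inductive hypothesis \emph{in the other direction} equals $\sum_{\g_l}\a_{j_l}(\g_l)2^{d_r(\g_l)}C_{\g_l,p}(q)$ — but one must be careful here, since that would reintroduce a $2^{d_r}$ on the left subtree, which is \emph{not} what appears.

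The main obstacle, and the point that needs genuine care rather than mechanical unfolding, is precisely this asymmetry: the factor $2^{d_r(\g)}$ on the left of \eqref{eq18} only counts depth toward the rightmost leaf, so when the special leaf sits in the left subtree there is no compensating power of $2$, and the recursion does \emph{not} close if one naively applies the same lemma to $\g_l$. The resolution is that the correct inductive statement to carry is \eqref{eq18} itself with ``rightmost leaf'' replaced by ``any fixed leaf of depth $d$'', giving factor $2^{d}$; equivalently, one proves the more symmetric identity
\begin{align}\label{eq:drsym}
\sum_{\g\in\G(k)}\a_k(\g)\,2^{d_m(\g)}\,C_{\g,p,m}(q) = \sum_{j=1}^k\sum_{\g\in\G(k)}\a_k(\g)\,C_{\g,p,j}(q)
\end{align}
for \emph{every} leaf position $m\in[k]$, where $d_m(\g)$ is the depth of the $m$-th leaf, with the right-hand side (hence the left-hand side) independent of $m$. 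The key sub-fact making this work is that $\sum_{\g\in\G(k)}\a_k(\g)\,2^{d_m(\g)}$ is the same for all $m$ and equals $k$; this follows from the recursion \eqref{eqreca} by a short induction (if the special leaf is among the $j_l$ leaves of $\g_l$ it contributes $2\cdot(\text{left count})$, weighted by $\tfrac{j_l}{k-1}$ worth of trees, and symmetrically for $\g_r$, yielding $k$). Once \eqref{eq:drsym} with this counting lemma is in hand, specializing to $m=k$ gives exactly \eqref{eq18}, and the inductive step closes because both subtrees are treated by the \emph{same} symmetric statement. I would therefore restructure the proof around \eqref{eq:drsym} and the auxiliary identity $\sum_\g\a_k(\g)2^{d_m(\g)}=k$, proving both simultaneously by induction on $k$.
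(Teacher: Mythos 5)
You correctly isolate the crux --- after unfolding with \eqref{eqreca}, the factor $2^{d_r}$ only tracks the rightmost leaf, so the induction cannot be applied naively to the left subtree --- but the fix you propose does not work. The generalized identity you want to induct on, namely that for \emph{every} leaf position $m$ one has $\sum_{\g}\a_k(\g)2^{d_m(\g)}C_{\g,p,m}(q)=\sum_{j=1}^k\sum_\g\a_k(\g)C_{\g,p,j}(q)$, and in particular your key counting claim $\sum_\g\a_k(\g)2^{d_m(\g)}=k$ for all $m$, is false: the law of the depth of the $m$-th leaf under $\a_k$ is \emph{not} independent of $m$. Already for $k=3$ the middle leaf has depth $2$ in both trees of $\G(3)$, so $\sum_\g\a_3(\g)2^{d_2(\g)}=4\neq 3$; correspondingly, for $m=2$ the left side of your proposed identity has total mass $4$ while the right side has total mass $3$. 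Only the extreme (leftmost and rightmost) leaves satisfy the counting identity, so the symmetric statement you plan to carry through the induction is simply not available.

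The paper closes the induction differently, and some version of that idea is needed. Unfold the left-hand side of \eqref{eq18} via \eqref{eqreca}, use $d_r(\g)=d_r(\g_r)+1$, and apply the induction hypothesis \emph{only} to the right subtree; this yields $2\sum_\g\a_k(\g)\sum_{i=j(\g)+1}^{k} C_{\g,p,i}(q)$, where $j(\g)$ is the number of leaves of the left subtree. Then, instead of touching the left subtree at all, invoke the mirror symmetry of the ensemble (swapping $\g_l$ and $\g_r$ preserves $\a_k$ by \eqref{eqreca}, and $\circ$ is commutative, equivalently $\bar\nu(A)=\bar\nu(A^c)$), which gives \eqref{eq201}: the sum with the special leaf ranging over the right subtree equals the one with it ranging over the left subtree. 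Adding the two halves converts the factor $2$ into the full sum over all $k$ leaves, and the induction closes with the base case $k=2$ exactly as you state. So your base case, the unfolding, and the splitting of $\sum_j$ are fine; what is missing is this left/right symmetry step, and the generalization to an arbitrary marked leaf with weight $2^{d_m}$ must be abandoned.
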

\begin{proof}
	By symmetry, it holds for $k=2$, since in this case \eqref{eq18} says $2q\circ p =  q\circ p+p\circ q$. Assume that \eqref{eq18} holds for all $j\leq k-1$. Then 
	\begin{align}\label{eq19}
	\sum_{\gamma\in\Gamma(k)}\alpha_k(\gamma)2^{d_r(\gamma)}C_{\gamma,p}(q) &= \frac2{k-1}\sum_{j=1}^{k-1}
	\sum_{\gamma\in\Gamma(j)}\alpha_j(\gamma)C_{\gamma}(q)\circ\left( 
	\sum_{\gamma'\in\Gamma(k-j)}\alpha_{k-j}(\gamma')2^{d_r(\gamma')}C_{\gamma',p}(q) \right)
	\nonumber\\
	& = 
	\frac2{k-1}\sum_{j=1}^{k-1}\sum_{\gamma\in\Gamma(j)}\alpha_j(\gamma)C_{\gamma}(q)\circ\left( 
	\sum_{i=1}^{k-j}\sum_{\gamma'\in\Gamma(k-j)}\alpha_{k-j}(\gamma')C_{\gamma',p,i}(q) \right)
	\nonumber\\
	& =  2
	\sum_{\gamma\in\Gamma(k)}\alpha_k(\gamma)\sum_{i=j(\gamma)+1}^{k}C_{\gamma,p,i}(q) ,
	\end{align}
	where $j(\gamma)$ is the number of leaves in the left subtree of $\gamma$, and we have used \eqref{eqreca}. 
	By symmetry, 
	\begin{align}\label{eq201}
	\sum_{\gamma\in\Gamma(k)}\alpha_k(\gamma)\sum_{i=j(\gamma)+1}^{k}C_{\gamma,p,i}(q)
	=
	\sum_{\gamma\in\Gamma(k)}\alpha_k(\gamma)\sum_{i=1}^{j(\gamma)}C_{\gamma,p,i}(q)\,.
	\end{align}
	Therefore, 
	\begin{align}\label{eq202}
	2
	\sum_{\gamma\in\Gamma(k)}\alpha_k(\gamma)\sum_{i=j(\gamma)+1}^{k}C_{\gamma,p,i}(q)\nonumber
	&=
	\sum_{\gamma\in\Gamma(k)}\alpha_k(\gamma)\sum_{i=1}^{j(\gamma)}C_{\gamma,p,i}(q)
	+\sum_{\gamma\in\Gamma(k)}\alpha_k(\gamma)\sum_{i=j(\gamma)+1}^{k}C_{\gamma,p,i}(q) \nonumber\\
	&  =
	\sum_{i=1}^k\sum_{\gamma\in\Gamma(k)}\alpha_k(\gamma)C_{\gamma,p,i}(q).
	\end{align}
	The desired identity follows from \eqref{eq19} and \eqref{eq202}.
\end{proof}

We can now turn to the proof Theorem of \ref{wasscontrlemma2}.

\begin{proof}[{\bf Proof of \eqref{nonlincont2}}]
From Lemma \ref{lem:wildrep} and  Lemma \ref{lem:dr}, we write
\begin{align}\label{eqbi17}
\bar S_t(q)(p-q) 
=\sum_{k=1}^\infty \b_t(k) \,\bar s^{(k)},
\end{align}
where
\begin{align}\label{eqbia17}
\bar s^{(k)}=\sum_{j=1}^k\sum_{\gamma\in\Gamma(k)}\alpha_k(\gamma)
\sum_{\vec A\in \cV_n^{k-1}}\nu(\vec A) \,\left[C^{\vec A}_{\gamma,p,j}(q)-C^{\vec A}_{\gamma}(q)\right]\,.
\end{align}
Let $\cA_{j}$ denote the set of $\vec A\in\cV_n^{k-1}$ such that $|V_j(\vec A)|\geq 2$. Since $p,q$ have the same marginals, arguing as in the proof of Theorem \ref{contrlemma2}, we obtain, or all $\phi:\O\mapsto\bbR$,
\begin{align}\label{eqbas17}
|\scalar{C^{\vec A}_{\gamma,p,j}(q)-C^{\vec A}_{\gamma}(q)}{\phi}|\leq 2\|\phi\|_\infty \|p-q\|_\tv\, \ind(\cA_j)\,.
\end{align}
Therefore, as in \eqref{eq10} we obtain
\begin{align}\label{eqbas18}
|\scalar{\bar s^{(k)}}{\phi}|\leq {n(n-1)}\,\|\phi\|_\infty\,\|p-q\|_\tv 
\sum_{\gamma\in\Gamma(k)}\alpha_k(\gamma)
\o(\g).
\end{align}
From \eqref{eqbi17} and Lemma \ref{lemma:omega} we conclude the proof of \eqref{nonlincont2}.
\end{proof}

\begin{proof}[{\bf Proof of \eqref{nonlincont3}}]
The proof of \eqref{nonlincont3} requires a bit more work. 
Let us define $$r_t: = S_t(p) - S_t(q) - \bar{S}_t(q)(p-q).$$
From Lemma \ref{lem:wildrep} and  Lemma \ref{lem:dr} we see that
\begin{align}\label{eqa17}
r_t=\sum_{k=1}^\infty \b_t(k) r^{(k)},
\end{align}
where
\begin{align}\label{eqaa17}
r^{(k)}=\sum_{\gamma\in\Gamma(k)}\alpha_k(\gamma)\left[C_{\gamma}(p)-C_{\gamma}(q)-\sum_{i=1}^k(C_{\gamma,p,i}(q)-C_{\gamma}(q))\right]\,,
\end{align}
We are going to prove that for any $\phi:\Omega\mapsto\bbR$, 
\begin{align}\label{eq27}
\scalar{r^{(k)}}{\phi}& 
\leq \tfrac1{16}\,n^3(n-1)(n-2)\|\phi\|_\infty \|p-q\|^2_{\rm TV}\sum_{\gamma\in\Gamma(k)}\alpha_k(\gamma)\o(\g).
\end{align}
By the argument in \eqref{eqabb17} and Lemma \ref{lemma:omega} this is sufficient to end the proof.

With the notation from the proof of Theorem \ref{contrlemma2} we write 
\begin{align}\label{eq28}
& \scalar{r^{(k)}}{\phi}=\sum_{\gamma\in\Gamma(k)}\alpha_k(\gamma)
\sum_{(x,y)\in\Omega^k\times \Omega^k}\sum_{\vec A\in\cV_n^{k-1}}\mu(x,y)\nu(\vec A) \,\times  \\
&\times \sum_{z\in\Omega} \phi(z)\left(\ind(z=u(y,\vec A))-\ind(z=u(x,\vec A))
- \sum_{i=1}^k\left(\ind(z=u(x,y,\vec A,i) -\ind(z=u(x,\vec A)) \right)\right), 
\nonumber
\end{align}
where we call $u(x,y,\vec A,j)$ the unique configuration in $\Omega$ such that 
$$u(x,y,\vec A,j)_{V_j(\vec A)} = y^j_{V_j(\vec A)}\,,\qquad u(x,y,\vec A,j)_{V_i(\vec A)} = x^i_{V_i(\vec A)}\,,\;\;\text{for every }\; i\neq j.$$ That is, $u(x,y,\vec A,j)$ coincides with $u(x,\vec A)$ except that on $V_j(\vec A)$ its content (if not empty) is taken from $y^j$. 
It follows that 
\begin{gather*}
\ind(z=u(x,\vec A)) =
\prod_{i\in \partial \g}
\ind(z_{V_i(\vec A)}=x^i_{V_i(\vec A)})\,,\\
\ind(z=u(x,y,\vec A,j)) =\ind(z_{V_j(\vec A)}=y^j_{V_j(\vec A)} )
\prod_{i\in \partial \g:\; i\neq j}
\ind(z_{V_i(\vec A)}=x^i_{V_i(\vec A)})\,,
\end{gather*}
where $\partial\g$ denotes the set of leaves of $\g$. 

As in the proof of Theorem \ref{contrlemma2}, we let $F=F(\vec A,\g)\subset \partial \g$ denote the set of leaves $i\in \partial \g$ such that $|V_i(\vec A)|> 1$, and recall that since $\mu$ is a product over the leaves, and $p,q$ have  the same marginals one has the identities 
\eqref{eq282a} and \eqref{eq282}.
With this notation we  rewrite \eqref{eq28} as
\begin{align}\label{eq2a8}
& \scalar{r^{(k)}}{\phi}=\sum_{\gamma\in\Gamma(k)}\alpha_k(\gamma)
\sum_{\vec A\in\cV_n^{k-1}} \nu(\vec A)\sum_{z\in\Omega} \phi(z)\times \\ 
&  \times \left[
\mu[w_{\partial \g}(Y,z,\vec A)] - \mu[w_{\partial \g}(X,z,\vec A)] 
- \sum_{i=1}^k\left(\mu[w_{\partial \g}(X(i,y),z,\vec A)] - \mu[w_{\partial \g}(X,z,\vec A)]
\right)\right],\nonumber
\end{align}
where $X(i,y)\in\O^k$ denotes the vector whose $i$-th component is $Y^i$ while all other components are $X^j$.  
Moreover, for a given choice of $\g,\vec A, z$, the square bracket in \eqref{eq2a8} is also equal to 
\begin{align}\label{eq2a81}
&\mu[w_{\partial \g}(Y,z,\vec A)] - \mu[w_{\partial \g}(X,z,\vec A)] 
- \sum_{i=1}^k\left(\mu[w_{\partial \g}(X(i,y),z,\vec A)] - \mu[w_{\partial \g}(X,z,\vec A)]
\right) \\
& = \mu[w_{F^c} (X,z,\vec A)]\,\,\mu\left[w_F (Y,z,\vec A)-w_F (X,z,\vec A)-\sum_{i\in F}
\left(w_{F}(X(i,y),z,\vec A) - w_{F}(X,z,\vec A)
\right)\right].
\nonumber
\end{align}
In particular, if $\g,\vec A$ are such that $F=F(\g,\vec A)=\emptyset$, then  \eqref{eq2a81} vanishes. 
Moreover, \eqref{eq2a81} vanishes also when $F$ is a single leaf. Indeed, if e.g.\  $F=\{j\}$, 
then 
\begin{align}\label{eq2a83}
&w_F (Y,z,\vec A)-w_F (X,z,\vec A)-\sum_{i\in F}
\left(w_{F}(X(i,y),z,\vec A) - w_{F}(X,z,\vec A)
\right)\nonumber \\ &\qquad \qquad   =w_F (Y,z,\vec A)-
w_{F}(X(j,y),z,\vec A) = 0. 
\end{align}
Thus, we may restrict to the case of $|F|>1$. Therefore, 
\begin{align}\label{eq2a85}
& |\scalar{r^{(k)}}{\phi}|\leq \|\phi\|_\infty\sum_{\gamma\in\Gamma(k)}\alpha_k(\gamma)
\sum_{\vec A\in\cV_n^{k-1}} \nu(\vec A)\ind(|F|>1) 
\,\mu\left[\G(\g,\vec A,X,Y)\right],
\end{align}
where we use the notation 
\begin{align}
&\G(\g,\vec A,X,Y) \\& = \sum_{z\in\O} w_{F^c} (X,z,\vec A)\left|w_F (Y,z,\vec A)-w_F (X,z,\vec A)-\sum_{i\in F}
\left(w_{F}(X(i,y),z,\vec A) - w_{F}(X,z,\vec A)
\right)\right|.
\end{align}
Clearly, if $X=Y$ the expression within absolute values above vanishes. Moreover, the same applies if $(X,Y)$ is such that $X^i=Y^i$ for all $i\in F$ except one leaf $j\in F$ where $X^j\neq Y^j$. Indeed, in this case $X(i,y)=X$ for all $i\in F$, $i\neq j$, and $X(j,y)=Y$.
Given $X,Y$ we write $E=E(X,Y)\subset F$ for the set of leaves $i\in F$ where $X^i\neq Y^i$. Then 
\begin{align}\label{abbao}
&\G(\g,\vec A,X,Y) \nonumber\\&\quad=\sum_z w_{E^c\cup F^c} (X,z,\vec A)\left|w_E (Y,z,\vec A)-w_E (X,z,\vec A)-\sum_{i\in E}
\left(w_{E}(X(i,y),z,\vec A) - w_{E}(X,z,\vec A)\right)\right|\nonumber \\
&\quad \leq \sum_z w_{E^c} (X,z,\vec A)\left|w_E (Y,z,\vec A)-\sum_{i\in E}
w_{E}(X(i,y),z,\vec A)\right| \ind(|E|>1)+ (|E|-1)\ind(|E|>1)\nonumber \\
&\quad \leq (|E|+1)\ind(|E|>1)+ (|E|-1)\ind(|E|>1)
=  2|E|\ind(|E|>1) \leq 2|F|\ind(|E|>1).
\end{align}
Next, we use $|F|\leq n/2$, 
since $\cup_{i\in F}V_i(\vec A)\subset [n]$ and $|V_i(\vec A)|\geq 2$ for all $i\in F$. 
Since $\mu$ is a product over leaves and on each leaf it satisfies \eqref{eq29}, 
a union bound over the set of pairs in $F$ shows that
\begin{align}\label{eqa128}
\mu\left[|E|>1\right]\leq \tfrac12\,|F|(|F|-1)\|p-q\|_{\rm TV}^2. 
\end{align}
Therefore,
\begin{align}\label{eq2a86}
& |\scalar{r^{(k)}}{\phi}|\leq \|\phi\|_\infty \|p-q\|_{\rm TV}^2\sum_{\gamma\in\Gamma(k)}\alpha_k(\gamma)
\nu(|F|^2(|F|-1))
.
\end{align}
Next, observe that $|F|\leq n/2$, 
since $\cup_{i\in F}V_i(\vec A)\subset [n]$ and $|V_i(\vec A)|\geq 2$ for all $i\in F$. Thus, if $|F|>1$ one has $|F|^2(|F|-1)\leq n^2(n-2)/8$.
Summarizing,
\begin{align}\label{eq2a87}
& |\scalar{r^{(k)}}{\phi}|\leq \tfrac{1}8\,n^2(n-2)\,\|\phi\|_\infty\,\|p-q\|_{\rm TV}^2 \sum_{\gamma\in\Gamma(k)}\alpha_k(\gamma)\nu(|F|>1).
\end{align}
For a given tree $\g$, the argument in \eqref{eq10} shows that 
$$
\nu(|F|>1)
\leq 
\nu(|F|\geq 1) \leq  
\tfrac12\,n(n-1)\sum_{i=1}^k\left(\frac{r(\nu)}{2}\right)^{d_i(\g)} = \tfrac12\,n(n-1)\,\,\o(\g) ,
$$
where we use the fact that the event $F\geq 1$ coincides with $\cup_{i=1}^k\cA_i$ from the proof of \eqref{eq10}. 
This ends the proof of \eqref{eq27}, which completes the proof of  \eqref{nonlincont3}.
\end{proof}

\subsection{Monotonicity of $W$ along the nonlinear evolution}
Before proving Theorem \ref{unifpropchaos}, we
show that the distance $W$ appearing in that statement is monotone along the  semigroup. We refer to \cite{tanaka} for a similar argument in the case of kinetic models. 
It will be convenient to work with a more symmetric version of \eqref{eq5}.
Since $p\circ q = \sum_A \bar\nu(A) (p\circ q)^A$ we may rewrite \eqref{eq5} as
\begin{align}\label{eq:symme}
C_{\gamma}(p)= \sum_{\vec A\in \cV_n^{k-1}} \nu(\vec A)
\hat{C}^{\vec A}_{\gamma}(p)
\end{align}
where $\nu(\vec A)$ is defined as in \eqref{eq:nua}, and $\hat{C}^{\vec A}_{\gamma}(p)$ represents the symmetric convolutions associated to the sampled sets $(A_1,\dots, A_{k-1})$. In other words, 
$\hat{C}^{\vec A}_{\gamma}(p)$ is defined recursively
by the following relations.
If $\gamma \in \Gamma(k)$ then, decomposing $\g$ into the left and right subtrees $\g_l,\g_r$ as in \eqref{eqreca},  one has
\begin{align}\label{lab:rec}
& \hat{C}^{\vec A}_{\gamma}(p) = (\hat{C}^{\vec{A}_{l}}_{\gamma_l}(p)\circ \hat{C}^{\vec{A}_{r}}_{\gamma_r}(p))^{A_1} 
\end{align}
where $A_1$ is the set attached to the root, and $\vec{A}_{l} :=\left(A_2,\dots,A_j\right)$, $\vec{A}_{r} :=\left(A_{j+1},\dots,A_{k-1}\right)$ are the sets associated to the left and right subtrees respectively.
For example, in the case where $C_{\gamma}(p) = ((p \circ p)\circ p)\circ p$ as in Figure \ref{fig1}, one has 
\begin{align}
C_{\gamma}(p) = \sum_{A_1,A_2,A_3 \subset [n]}\bar\nu(A_1)\bar \nu(A_2)\bar \nu(A_3)\hat{C}^{A_1,A_2,A_3}_{\gamma}(p)
\end{align}
where
$\hat{C}^{A_1,A_2,A_3}_{\gamma}(p) = (((p\, \circ\, p)^{A_3}\circ \,p)^{A_2})\circ\, p)^{A_1}$.
Therefore, as in \eqref{eq0} one obtains the decomposition 
\begin{align}\label{eq60}
p_t=\sum_{k=1}^{\infty}\b_t(k)\sum_{\g\in \Gamma(k)}\alpha_k(\g)\sum_{\vec A\in \cV_n^{k-1}}\nu(\vec A) \,\hat C^{\vec A}_{\gamma}(p).
\end{align}

\begin{lemma}\label{wasscontrlemma}
For any  $p, q \in \cP(\O)$, any $k\in\bbN$, any $\g\in\G(k)$, and any $\vec A\in \cV_n^{k-1}$, one has
\begin{align}\label{eq:mon1}
W\left(\hat C^{\vec A}_{\gamma}(p),\hat C^{\vec A}_{\gamma}(q)\right) \leq W\left(p,q\right).
\end{align}
In particular, $W\left(S_t(p),S_t(q)\right) \leq W\left(p,q\right)$ for any $t\geq 0$.
\end{lemma}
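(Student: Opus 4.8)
The plan is to reduce both assertions to two elementary properties of the Wasserstein distance $W$ from \eqref{def:W}. The first is convexity: for probabilities $P_i,Q_i$ on $\O$ and weights $\lambda_i\geq 0$ with $\sum_i\lambda_i=1$ one has $W(\sum_i\lambda_i P_i,\sum_i\lambda_i Q_i)\leq\sum_i\lambda_i W(P_i,Q_i)$, obtained by mixing optimal couplings, and since $\O$ is finite this stays valid for countable convex combinations. The second is a contraction-under-factorization bound: for any partition $(B_1,\dots,B_m)$ of $[n]$ and any $p,q\in\cP(\O)$,
\begin{align}\label{planbound}
W\big(p_{B_1}\otimes\cdots\otimes p_{B_m},\,q_{B_1}\otimes\cdots\otimes q_{B_m}\big)\leq W(p,q).
\end{align}
To prove \eqref{planbound} I would take an optimal $\G\in\Pi(p,q)$, let $\G^{(\ell)}\in\Pi(p_{B_\ell},q_{B_\ell})$ be the marginal of $\G$ on the coordinates in $B_\ell$, and use $\G^{(1)}\otimes\cdots\otimes\G^{(m)}$ as a coupling: its cost is $\sum_\ell\sum_{i\in B_\ell}\G(\si_i\neq\si_i')=\sum_{i=1}^n\G(\si_i\neq\si_i')=W(p,q)$ because the $B_\ell$ partition $[n]$. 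The conceptual point — and the step one must get right — is precisely to route through partitions of $[n]$ rather than couple the two subtrees of $\gamma$ independently: the naive recursion would cost $W(p,q)+W(p,q)$ at each internal node and lead nowhere, whereas \eqref{planbound} uses the optimal coupling of $p$ and $q$ exactly once per site and is lossless.

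With these facts available, the first substantial step is to show, by induction on $k$ using the recursion \eqref{lab:rec}, that for every $\gamma\in\G(k)$ and $\vec A\in\cV_n^{k-1}$ the measure $\hat C^{\vec A}_\gamma(p)$ is a finite convex combination, \emph{with weights independent of $p$}, of products $p_{B_1}\otimes\cdots\otimes p_{B_m}$ over partitions $(B_1,\dots,B_m)$ of $[n]$. The base case $k=1$ is trivial, since $\hat C^{\vec A}_\gamma(p)=p$. For $k\geq 2$, decomposing $\gamma$ into its subtrees $\gamma_l,\gamma_r$ and the set $A_1$ at the root as in \eqref{eqreca}, equations \eqref{lab:rec} and \eqref{defconv1} give
\begin{align}\label{planrec}
\hat C^{\vec A}_\gamma(p)=\tfrac12\,(\hat C^{\vec A_l}_{\gamma_l}(p))_{A_1}\otimes(\hat C^{\vec A_r}_{\gamma_r}(p))_{A_1^c}+\tfrac12\,(\hat C^{\vec A_r}_{\gamma_r}(p))_{A_1}\otimes(\hat C^{\vec A_l}_{\gamma_l}(p))_{A_1^c}.
\end{align}
Since the marginal on $A_1$ of a product $\bigotimes_\ell p_{B_\ell}$ is $\bigotimes_\ell p_{B_\ell\cap A_1}$, a product over a partition of $A_1$ (and likewise for $A_1^c$), the inductive hypothesis makes each of the four factors in \eqref{planrec} a $p$-independent convex combination of products over partitions of $A_1$ or of $A_1^c$; hence each of the two tensor products, and then their average, is a $p$-independent convex combination of products over partitions of $[n]=A_1\cup A_1^c$. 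This closes the induction. (Alternatively, one can check directly that $\hat C^{\vec A}_\gamma(p)=2^{-(k-1)}\sum_{\vec B}C^{\vec B}_\gamma(p)$, the average over the $2^{k-1}$ tuples $\vec B$ with $B_i\in\{A_i,A_i^c\}$, each $C^{\vec B}_\gamma(p)=p_{V_1(\vec B)}\otimes\cdots\otimes p_{V_k(\vec B)}$ being a product over the partition $\{V_j(\vec B)\}$ of $[n]$ by \eqref{eq6}; the same route also handles the ``in particular'' via \eqref{eq0}.)

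The bound \eqref{eq:mon1} then follows at once: apply convexity of $W$ to the two representations of $\hat C^{\vec A}_\gamma(p)$ and $\hat C^{\vec A}_\gamma(q)$, which share the same weights, and bound each term using \eqref{planbound}. Finally, for the last assertion, the representation \eqref{eq60} exhibits $S_t(p)=\sum_{k\geq1}\b_t(k)\sum_{\gamma\in\G(k)}\alpha_k(\gamma)\sum_{\vec A\in\cV_n^{k-1}}\nu(\vec A)\,\hat C^{\vec A}_\gamma(p)$ as a convex combination with weights not depending on $p$, so one last application of convexity of $W$ together with \eqref{eq:mon1} yields $W(S_t(p),S_t(q))\leq W(p,q)$ for all $t\geq 0$. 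No step beyond the (purely combinatorial) bookkeeping in \eqref{planrec} presents any real difficulty.
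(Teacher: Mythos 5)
Your proof is correct, but it takes a genuinely different route from the paper's. The paper first establishes the single-collision contraction $W\bigl((p_1\circ p_2)_A,(q_1\circ q_2)_A\bigr)\leq\tfrac12 W(p_1,q_1)+\tfrac12 W(p_2,q_2)$, see \eqref{wildineq1overA}, by explicitly coupling the symmetrized measures $\Pi_{x_1,x_2,A}$ and $\Pi_{y_1,y_2,A}$, and then runs an induction on the tree along the recursion \eqref{lab:rec}; note that this is not the ``lossy'' recursion you warn against, since the symmetrization in \eqref{defconv1} yields the average $\tfrac12 W+\tfrac12 W$ rather than the sum, so coupling the two subtrees independently is harmless there. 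Your argument instead makes the structure of $\hat C^{\vec A}_{\gamma}(p)$ explicit: it is a convex combination, with weights and partitions depending only on $(\gamma,\vec A)$, of product measures $p_{B_1}\otimes\cdots\otimes p_{B_m}$ over partitions of $[n]$ --- your aside that $\hat C^{\vec A}_{\gamma}(p)=2^{-(k-1)}\sum_{\vec B}C^{\vec B}_{\gamma}(p)$ with $B_i\in\{A_i,A_i^c\}$ is indeed correct and, via \eqref{eq6}, is the quickest way to see this --- and then exploits the site-additivity of the Hamming cost in \eqref{def:W}: pushing an optimal coupling of $(p,q)$ through the partition map preserves the cost exactly, which gives your factorization bound, and convexity (including the countable mixture in \eqref{eq60}) finishes both assertions. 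Both arguments are sound; the paper's buys a reusable one-collision inequality in the spirit of Tanaka's argument for kinetic models, while yours isolates the structural reason the lemma is lossless, namely that $W$ is built from a distance that decomposes over sites, so factorizing over any partition of $[n]$ cannot increase it.
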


\begin{proof}
First we show
\begin{align}\label{wildineq1}
W\left((p_1 \circ p_2),(q_1 \circ q_2)\right)\leq \tfrac{1}{2}\,W\left(p_1,q_1\right) + \tfrac{1}{2}\,W\left(p_2,q_2\right) 
\end{align}
for all $p_1,p_2,q_1,q_2 \in \cP(\O).$
For a fixed $A \subset [n]$ and configurations $x_1, x_2 \in \Om$ we define
\begin{align*}
\Pi_{x_1,x_2,A}(z) := \tfrac{1}{2}\left(\IND(z_A = x_{1,A}, z_{A^c}=x_{2,A^c})+\IND(z_A = x_{2,A}, z_{A^c}=x_{1,A^c})\right), \ \ z \in \Om.
\end{align*} 
Now we choose $\{X_1,Y_1\}, \{X_2,Y_2\}$ random variables such that
\begin{itemize}
\item  $\bbE\left[d\left(X_1,Y_1\right)\right] = W(p_1,q_1), \bbE\left[d\left(X_2,Y_2\right)\right] = W(p_2,q_2).$
\item $X_i$ is distributed with $p_i$ and $Y_i$ is distributed with $q_i, i=1,2.$
\item $\{X_1, Y_1\}$ and $\{X_2, Y_2\}$ are independent.
\end{itemize}
Note that 
\begin{align}
\bbE\left[\Pi_{X_1,X_2,A}\right] = \left(p_1 \circ p_2\right)^A.
\end{align}
Now, given $x_1, x_2, y_1, y_2 \in \Om$ and $A \subset [n],$ we consider the following probability measure
\begin{align*}
\pi(z_1,z_2):= &\tfrac{1}{2}\big(\IND(z_{1,A}=x_{1,A}, z_{1,A^c}=x_{2,A^c},z_{2,A}=y_{1,A}, z_{2,A^c}=y_{2,A^c}) \\
& + \IND(z_{1,A}=x_{2,A},z_{1,A^c}=x_{1,A^c},z_{2,A}=y_{2,A}, z_{2,A^c}=y_{1,A^c})\big).
\end{align*}
Note that this is a coupling of $\Pi_{x_1,x_2,A}$ and $\Pi_{y_1,y_2,A}.$  Therefeore
\begin{align}
W\left(\Pi_{x_1,x_2,A},\Pi_{y_1,y_2,A}\right)&\leq \sum_{z_1,z_2}\pi(z_1,z_2)d(z_1,z_2) \nonumber \\ 
&=\tfrac{1}{2}\left(d(x_{1,A}x_{2,A^c},y_{1,A}y_{2,A^c})+d(x_{2,A}x_{1,A^c},y_{2,A}y_{1,A^c})\right) \nonumber  \\
&=\tfrac{1}{2}\left(d(x_1,y_1)+d(x_2,y_2)\right). \label{wassineq}
\end{align}
Moreover, by convexity
\begin{align}
&W\left((p_1\circ p_2),(q_1 \circ q_2)\right)  \leq \sum_{A \subset [n]}\nu(A)W\left((p_1\circ p_2)^A,(q_1 \circ q_2)^A\right)
\end{align}
and, for each $A \subset [n]$,
\begin{align}\label{wildineq1overA}
W\left((p_1\circ p_2)^A,(q_1 \circ q_2)^A\right)  & \leq \bbE\left[W\left(\Pi_{X_1,X_2,A},\Pi_{Y_1,Y_2,A}\right)\right]  
=\tfrac{1}{2}\,W(p_1,q_1) + \tfrac{1}{2}\,W(p_2,q_2).
\end{align}
Combining the last two inequalities we obtain \eqref{wildineq1}. 

We now prove \eqref{eq:mon1} by induction over $k\geq 1$. Clearly, $k=1$ is trivial. The case $k=2$ follows by \eqref{wildineq1overA}. Suppose that \eqref{eq:mon1} holds for any $j\leq k-1$, $\g\in\G(j)$, $\vec A\in \cV_n^{j-1}$ and let $A_1$ be the set attached to the root of $\gamma$.
If $\gamma \in \Gamma(k)$ then we decompose $\g$ into the left and right subtrees $\g_l,\g_r$ as in \eqref{eqreca},  so that 
\begin{align*}
& \hat{C}^{\vec A}_{\gamma}(p) = (\hat{C}^{\vec{A}_{l}}_{\gamma_l}(p)\circ \hat{C}^{\vec{A}_{r}}_{\gamma_r}(p))^{A_1} \\
&\hat{C}^{\vec A}_{\gamma}(q) = (\hat{C}^{\vec{A}_{l}}_{\gamma_l}(q)\circ \hat{C}^{\vec{A}_{r}}_{\gamma_r}(q))^{A_1} .
\end{align*}
where $\vec{A}_{l} :=\left(A_2,\dots,A_j\right)$ and $\vec{A}_{r} :=\left(A_{j+1},\dots,A_{k-1}\right),$ and $j$ is the number of leaves of $\g_l.$
Then, using  \eqref{wildineq1overA} again and the inductive hypothesis we have
\begin{align*}
W(\hat{C}^{\vec A}_{\gamma}(p),\hat{C}^{\vec A}_{\gamma}(q))  &= W( (\hat{C}^{\vec{A}_{l}}_{\gamma_l}(p)\circ \hat{C}^{\vec{A}_{r}}_{\gamma_r}(p))^{A_1},(\hat{C}^{\vec{A}_{l}}_{\gamma_l}(q)\circ \hat{C}^{\vec{A}_{r}}_{\gamma_r}(q))^{A_1}) \\
& \leq \tfrac{1}{2}\,W(\hat{C}^{\vec{A}_{l}}_{\gamma_l}(p),\hat{C}^{\vec{A}_{l}}_{\gamma_l}(q)) + \tfrac{1}{2}\,W(\hat{C}^{\vec{A}_{r}}_{\gamma_r}(p), \hat{C}^{\vec{A}_{r}}_{\gamma_r}(q)) 
\leq W(p,q).
\end{align*}
This proves \eqref{eq:mon1}. 

By \eqref{eq:symme} and convexity, 
\begin{align}\label{wildineq3}
W(C_{\gamma}(p),C_{\gamma}(q))\leq W(p,q).
\end{align}
Finally, using the expression \eqref{eq60} for $S_t(p)=p_t$, again by convexity we obtain $$  W(S_t(p),S_t(q))\leq W(p,q),$$ for all $t\geq 0$.
\end{proof}

\begin{remark}\label{rem:mono}
We note that \eqref{wildineq1} shows in particular that 
\begin{align}\label{wildineq21}
W\left(p\circ p,q\circ q\right) \leq W\left(p,q\right).
\end{align}
This monotonicity  is not satisfied by the total variation distance. For example let us consider $\Om=\{0,1\}^2,$ $p:=\IND_{(1,1)}, q:=\frac 12(\IND_{(1,1)}+\IND_{(0,0)})$ and suppose $\nu$ is the uniform crossover. Then one can check that $p \circ p = \IND_{(1,1)}$ and $q \circ q = \frac{3}8(\IND_{(1,1)}+\IND_{(0,0)}) + \frac{1}8(\IND_{(1,0)}+\IND_{(0,1)}),$ and therefore
$\|p \circ p - q \circ q\|_\tv = \frac 58 > \frac 12 = \|p-q\|_\tv.$ 
\end{remark}

\subsection{Proof of Theorem \ref{unifpropchaos}}
For $k \in \bbN,$ the moment measures $F_t^{k,N}\in\cP(\O^k)$ and $\bar{F}_t^{k,N}\in\cP(\O^k)$ are defined by
\begin{align}
&F_t^{k,N}
=\sum_{\eta\in\Om^N}
\mu_{N,t}(\eta)(\lambda_{\eta})^{\otimes k}
, \qquad
\bar F_t^{k,N}
= \sum_{\eta\in\Om^N}
\mu_{N}(\eta)(S_t\lambda_{\eta})^{\otimes k}.
\end{align} 

By the triangular inequality we have
\begin{align}\label{labo}
&|P_k\mu_{N,t}(\phi_k)-p_t^{\otimes k}(\phi_k)|\leq \mathcal{T}_1 + \mathcal{T}_2 + \mathcal{T}_3\,,
\end{align}
where 
\begin{align}\label{labt}
\cT_1:=|P_k\mu_{N,t}(\phi_k)-F_t^{k,N}(\phi_k)| \,,\quad\cT_2:= |F_t^{k,N}(\phi_k)-\bar{F}_t^{k,N}(\phi_k)|\,,\quad\cT_3:= |\bar{F}_t^{k,N}(\phi_k)-p_t^{\otimes k}(\phi_k)| 
\end{align}
We are going to estimate each term separately. 

The term $\mathcal{T}_1$ can be estimated by using a simple combinatorial argument, and one obtains
\begin{align}\label{labt1}
\mathcal{T}_1 \leq \frac{2k(k-1)}{N}\|\phi_k\|_{\infty}.
\end{align}
Since the proof is identical to that in \cite[ Lemma 3.11]{revchaos} we omit the details. 

The last term $\mathcal{T}_3$ is estimated using the initial chaos. 
Note that 
\begin{align}
p_t^{\otimes k}\left(\phi_k\right)-\bar{F}_t^{k,N}\left(\phi_k\right)
&
=\sum_{\eta \in \Om^N}\mu_N(\eta)\left(\prod_{i=1}^k\scalar{S_t(p)}{\phi^i}
\,-\,\prod_{i=1}^k\scalar{S_t(\lambda_{\eta})}{\phi^i}\right).
\end{align}
Using $\prod_{i=1}^ka_i -\prod_{i=1}^kb_i = \sum_{i=1}^k(\prod_{1 \leq j < i}a_j)(a_i-b_i)(\prod_{i<j\leq k}b_j)$,
we obtain
\begin{align*}
\mathcal{T}_3 &
\leq\sum_{\eta \in \Om^N}\mu_N(\eta)\sum_{i=1}^k\Bigg(\prod_{1 \leq j < i}|\scalar{S_t(p)}{\phi^j}|\Bigg)\left|\scalar{S_t(p)-S_t(\lambda_{\eta})}{\phi^i}\right| 
\Bigg(\prod_{i<j\leq k}|\scalar{S_t(\lambda_{\eta})}{\phi^j}|\Bigg) \\
&\leq 2k\|\phi_k\|_{\infty}\,\mu_N \left[W\left(S_t(p),S_t(\lambda_{\chi^N})\right)\right]\leq 
2k\|\phi_k\|_{\infty}\,\mu_N \left[W\left(p,\lambda_{\chi^N}\right)\right],
\end{align*}
where $\chi^N$ has distribution $\mu_N$, and we have used 
\begin{align}\label{eq:sta}
|\scalar{\mu-\mu'}{f}|\leq 2\|f\|_\infty \|\mu-\mu'\|_\tv\leq 2\|f\|_\infty W(\mu,\mu'),
\end{align} 
for any $\mu,\mu'\in\cP(\O)$, $f:\O\mapsto\bbR$, and  the monotonicity from Lemma \ref{wasscontrlemma}.

The estimate of the second term $\cT_2$ is more delicate. Here we use the contraction proved in Theorem \ref{contrlemma2} and Theorem \ref{wasscontrlemma2}. 
Define, for $s\in[0,t]$,
$$\Psi^k_s = \sum_{\eta\in\O^N}\mu_{N,t-s}(\eta)(S_s(\l_{\eta}))^{\otimes k}.$$
Then $\Psi^k_s\in\cP(\O^k)$ for all $s\in[0,t]$, and 
$$F_t^{k,N}(\phi_k)=\Psi^k_0(\phi_k)\,,\qquad \bar{F}_t^{k,N}(\phi_k)=\Psi^k_t(\phi_k).$$ 
Therefore,
\begin{align}\label{eq:inpri}
\bar{F}_t^{k,N}(\phi_k) -F_t^{k,N}(\phi_k)= \int_0^t \partial_s \left[\Psi^k_s(\phi_k)\right] \,ds\,.
\end{align}
Now, 
\begin{align}\label{eq:inpria}
& \partial_s \left[\Psi^k_s(\phi_k)\right] =\sum_{\eta\in\O^N} (\partial_s\mu_{N,t-s}(\eta)) (S_s(\l_{\eta}))^{\otimes k}(\phi_k) + \sum_{\eta\in\O^N} \mu_{N,t-s}(\eta)\partial_s (S_s(\l_{\eta}))^{\otimes k}(\phi_k)\\
 &= -\sum_{\eta\in\O^N} \mu_{N,t-s}(\eta)\cL_N\prod_{\ell=1}^k\scalar{S_s(\l_{\eta})}{\phi^\ell} + 
 \sum_{\eta\in\O^N} \mu_{N,t-s}(\eta)
 \sum_{i=1}^k\scalar{Q(S_s(\l_{\eta}))}{\phi^i}\prod_{\ell\neq i}\scalar{S_s(\l_{\eta}}{\phi^\ell} .
  \nonumber\end{align}
Next, we show that 
\begin{align}\label{eq:lintos}
 Q(S_s(\l_{\eta}))
 =\frac1{N} \sum_{i<j}\sum_A\nu(A)\bar S_s(\l_\eta)(\l_{\eta^{i,j,A}} -\l_{\eta}).
\end{align}
Consider the linearized equation \eqref{linsem}. Taking $q\in\cP(\O)$ and $h= Q(q)$ one has that the solution $h_t=\bar S_t(q)(h)$ satisfies $h_t = Q\left(S_t(q)\right)$, that is
\begin{align}\label{eq:qqq}
\bar S_t(q)(Q(q)) = Q\left(S_t(q)\right)\,,\qquad t\geq 0,
\end{align}
for all $q\in\cP(\O)$. 
To see this note that for all $t\geq 0$,
\begin{align*}
\sum_{\sigma \in \Om}S_t(q)(\sigma) = 1\,,\qquad \sum_{\sigma \in \Om}Q(S_t(q))(\sigma) = 0\,,
\end{align*}
and therefore
\begin{align*}
&\partial_t Q\left(S_t(q)\right)(\sigma') 
= \sum_{\substack{A \subset [n] \\ \sigma \in \Om}}\nu(A)\bigg(Q(S_t(q))(\sigma_{A^c}  \sigma'_{A}) S_t(q)(\sigma_A  \sigma'_{A^c}) \\
&\qquad \qquad \qquad\qquad \qquad \qquad+S_t(q)(\sigma_{A^c}  \sigma'_{A})Q(S_t(q))(\sigma_A  \sigma'_{A^c}) - Q(S_t(q))(\sigma')S_t(q)(\sigma)\bigg) \\
&\qquad= \sum_{\substack{A \subset [n] \\ \sigma \in \Om}}\nu(A)\bigg(Q(S_t(q))(\sigma_{A^c}  \sigma'_{A}) S_t(q)(\sigma_A  \sigma'_{A^c}) +S_t(q))(\sigma_{A^c}  \sigma'_{A})Q(S_t(q))(\sigma_A  \sigma'_{A^c}) \\
&\qquad \qquad\qquad \qquad \qquad\qquad - Q(S_t(q))(\sigma')S_t(q)(\sigma)-Q(S_t(q))(\sigma)S_t(q)(\sigma')\bigg) \\
&\qquad=2\hat{Q}(S_t(q),Q(S_t(q)))(\sigma').
\end{align*}
This proves \eqref{eq:qqq}. If $q=\l_{\eta}$ for some $\eta\in\O^N$, then
\begin{align}
Q(\l_{\eta}) = \sum_A\nu(A) \left((\l_{\eta})_A\otimes(\l_{\eta})_{A^c} - \l_{\eta}\right).
\end{align}
Moreover, for any $A\subset [n]$,
\begin{align}
(\l_{\eta})_A\otimes(\l_{\eta})_{A^c}- \l_{\eta}&= \frac1{2N^2} \sum_{i,j=1}^N(\ind_{\eta^{i,j,A}(i)} +  \ind_{\eta^{i,j,A}(j)} - \ind_{\eta(i)} -  \ind_{\eta(j)})\\
&=  \frac1{N} \sum_{i<j}(\l_{\eta^{i,j,A}} - \l_{\eta}).
\end{align}
It follows that 
\begin{align}
Q(\l_{\eta}) = \frac1{N} \sum_{i<j}\sum_A\nu(A) (\l_{\eta^{i,j,A}} - \l_{\eta}).
\end{align}
Thus, using \eqref{eq:qqq} and the linearity of $\bar S_s(\l_\eta)(\cdot)$ we obtain
\eqref{eq:lintos}.

On the other hand, 
\begin{align}
\cL_N\prod_{\ell=1}^k\scalar{S_s(\l_{\eta})}{\phi^\ell}   = 
\frac1{N} \sum_{i<j}\sum_A\nu(A) \left(\prod_{\ell=1}^k\scalar{S_s(\l_{\eta^{i,j,A}})}{\phi^\ell}    -\prod_{\ell=1}^k\scalar{S_s(\l_{\eta})}{\phi^\ell}  \right).
\end{align}
Suppose we can show that 
\begin{align}\label{Ks}
&\Big|\prod_{\ell=1}^k\scalar{S_s(\l_{\eta^{i,j,A}})}{\phi^\ell}    -\prod_{\ell=1}^k\scalar{S_s(\l_{\eta})}{\phi^\ell}  \nonumber \\ & \quad - \sum_{u=1}^k\scalar{\bar S_s(\l_\eta)(\l_{\eta^{i,j,A}} -\l_{\eta})}{\phi^u}\prod_{\ell\neq u}\scalar{S_s(\l_{\eta}}{\phi^\ell} 
\Big|\leq K_s,
\end{align}
for some function $K_s$, $s\in[0,\infty)$, independent of $\eta,i,j,A$ . Then by \eqref{eq:inpria} and \eqref{eq:lintos} we would have $\cT_2\leq \frac{N}2\int_0^tK_s ds$. Thus the proof will be completed by proving \eqref{Ks} for a suitable $K_s$. 

 Observe that
 \begin{align}\label{eq:inpriaa3}
 &\prod_{\ell=1}^ka_\ell -\prod_{\ell=1}^kb_\ell - \sum_{u=1}^kc_u\prod_{j\neq u}b_j
  = \sum_{u=1}^k\left[(\prod_{1 \leq j < u}a_j)(a_u-b_u)
- (\prod_{1 \leq j < u}b_j)c_u\right]\prod_{j> u}b_j\\
&\qquad  = \sum_{u=1}^k(a_u-b_u-c_u)\prod_{j\neq u}b_j + 
\sum_{u=1}^k\left[(\prod_{1 \leq j < u}a_j)
- (\prod_{1 \leq j < u}b_j)\right](a_u-b_u)\prod_{j> u}b_j
\\
&\qquad  = \sum_{u=1}^k(a_u-b_u-c_u)\prod_{j\neq u}b_j + 
\sum_{u=1}^k\sum_{v=1}^{u-1}(\prod_{j < v}a_j)(\prod_{v<j< u}b_j)(a_v-b_v)(a_u-b_u)\prod_{j> u}b_j.
\end{align}
Define $a_\ell = \scalar{S_s(\l_{\eta^{i,j,A}})}{\phi^\ell} $, $b_\ell=\scalar{S_s(\l_{\eta})}{\phi^\ell}  $, and $c_\ell = \scalar{\bar S_s(\l_\eta)(\l_{\eta^{i,j,A}} -\l_{\eta})}{\phi^\ell}$.
Noticing that
\begin{align}
\|\l_{\eta^{i,j,A}} -\l_{\eta}\|_\tv\leq \frac2N,
\end{align}
and that $\l_{\eta^{i,j,A}},\l_{\eta}$ have the same marginals, it follows from Theorem \ref{wasscontrlemma2} 
that
\begin{align}\label{eq:abc1}
|a_u-b_u-c_u|\leq n^5\,\|\phi^u\|_\infty e^{-D(\nu)\, s} \frac{4}{N^2}.
 \end{align} 
 Moreover, Theorem \ref{contrlemma2}
 shows that for all $u,v$,
\begin{align}\label{eq:abc2}
|a_v-b_v||a_u-b_u|\leq n^4\,\|\phi^v\|_\infty\,\|\phi^u\|_\infty e^{-2D(\nu)\, s} \frac{4}{N^2}.
 \end{align} 
 These bounds hold uniformly in $\eta\in\O^N,1\leq i<j\leq n,A\subset[n]$.  This proves \eqref{Ks}  with 
   \begin{align}\label{eq:inprinc}
  K_s =ke^{-D(\nu)\, s}( 
  n^5 + (k-1)n^4e^{-D(\nu)\, s}) \frac{\|\phi_k\|_\infty }{N^2}.
   \end{align}
   Therefore,
  \begin{align}\label{eq:inprinc2}
   \cT_2\leq \frac{N}2\int_0^tK_s \leq \frac{k^2n^5}{D(\nu) N}\,\|\phi_k\|_\infty (1-e^{-D(\nu)\, t}).
\end{align}
The proof of Theorem \ref{unifpropchaos} is complete.
 
\section{Relative entropy decay 
}\label{sec:entdec}
The goal of this section is to prove Theorem \ref{th:ent}, Proposition \ref{prop:upbo}, and Theorem \ref{entprodthm}. We start by setting up a convenient notation and by gathering some preliminary ingredients of the proof. To simplify our notation we write $\mu=\pi_N$ for the uniform distribution over $\cS_{N,n}$. For any $A\subset [n]$, $f:\cS_{N,n}\mapsto\bbR$, we are going to use the notation $$\mu_A f =\mu(f\tc \eta_{A^c}),$$ 
where $\mu(\cdot\tc \eta_{A^c})$ denotes the conditional expectation given the variables $$\eta_{A^c}:=\{\eta_i(j), \;i\in A^c, \;j=1,\dots,N\}.$$ The function $\mu_Af$ thus depends on $\eta$ only through the variabes $\eta_{A^c}$. When $A=[n]$ we have the global expectation $\mu_{[n]} f = \mu(f)$. With this notation, $\mu_A$ is the  orthogonal projection, in $L^2(\cS_{N,n},\mu)$, onto the space of functions that do not depend on the $A$-component of $\eta\in\cS_{N,n}$. 
Notice the relations $\mu(f)=\mu(\mu_A f)$, and $\mu_B(f)=\mu_B(\mu_A f)$, for all $A\subset B\subset [n]$. 
 We also use the notation $\ent_A(f)$ for the entropy of $f:\cS_{N,n}\mapsto\bbR_+$ with respect to $\mu_A$, that is 
 \begin{equation}
\label{entA}
\ent_A(f) = \mu_A\left[f\log(f/\mu_A f)\right]\,.
\end{equation}
Thus $\ent_A(f)$ is a function that depends on $\eta$ through the variables $\eta_{A^c}$ only. Its expectation satisfies
  \begin{equation}
  \label{entA2}
\mu\left[\ent_A(f)\right] = \ent(f) - \ent[\mu_Af]\,,
\end{equation}
where $\ent(f) = \ent_{[n]}(f) = \mu\left[f\log(f/\mu (f))\right]$.
In our setting the well known Shearer inequality can be formulated as follows. 
\begin{lemma}\label{lem:shearer}
For any distribution $\nu$ on subsets of $[n]$, for any $f:\cS_{N,n}\mapsto\bbR_+$, 
  \begin{equation}
  \label{shearer}
  \sum_{A\subset [n]}\nu(A) \,\mu\left[\ent_A(f)\right] \geq \g(\nu)\, \ent(f)\,,
\end{equation}
where $\g(\nu) = \min_{i\in[n]}\sum_{A:\,A\ni i}\nu(A)$. 
\end{lemma}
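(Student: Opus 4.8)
The plan is to prove Lemma \ref{lem:shearer} by the classical chain‑rule (``Han--Shearer'') argument, exploiting that $\mu=\pi_N$ is the product $\mu=\mu_1\otimes\cdots\otimes\mu_n$, where $\mu_i$ is the uniform law of the $i$-th permutation $\eta_i\in S_N$, so that $\mu_A$ and $\ent_A$ are precisely the conditional expectation and conditional entropy attached to this product structure. By homogeneity of $\ent$ in $f$ we may assume $\mu(f)=1$. Fix the ordering $1,\dots,n$ and set $\bar h_i:=\mu_{\{i+1,\dots,n\}}f$, the conditional expectation of $f$ given $\eta_1,\dots,\eta_i$, so that $\bar h_0=\mu(f)$, $\bar h_n=f$ and $\mu_i\bar h_i=\bar h_{i-1}$. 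Put $\delta_i:=\mu[\ent_{\{i\}}(\bar h_i)]\ge 0$. Using $\mu[(\mu_i\bar h_i)\log(\mu_i\bar h_i)]=\mu[\bar h_{i-1}\log\bar h_{i-1}]$ one gets $\mu[\ent_{\{i\}}(\bar h_i)]=\mu[\bar h_i\log\bar h_i]-\mu[\bar h_{i-1}\log\bar h_{i-1}]$, whence the chain rule $\sum_{i=1}^n\delta_i=\ent(f)$.

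The heart of the matter is the bound, for every $A\subseteq[n]$, $\mu[\ent_A(f)]\ \ge\ \sum_{i\in A}\delta_i$. By \eqref{entA2} this is equivalent to $\ent(\mu_Af)\le\sum_{i\in A^c}\delta_i$. Since $\mu_Af$ depends only on $\eta_{A^c}$ and the $\eta_{A^c}$-marginal of $\mu$ is again a product, I apply the same chain‑rule decomposition to $g:=\mu_Af$ along the ordered set $A^c=\{j_1<\cdots<j_m\}$, writing $g_l:=\mu_{\{j_{l+1},\dots,j_m\}}g=\mu_{A\cup\{j_{l+1},\dots,j_m\}}f$, so that $\ent(\mu_Af)=\sum_{l=1}^m\mu[\ent_{\{j_l\}}(g_l)]$. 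Every coordinate $k>j_l$ lies either in $A$ or among $j_{l+1},\dots,j_m$, hence $\{j_l+1,\dots,n\}\subseteq A\cup\{j_{l+1},\dots,j_m\}$, and therefore $g_l=\mu_B(\bar h_{j_l})$ for a set $B\subseteq[j_l-1]$ with $j_l\notin B$; in words, $g_l$ is obtained from $\bar h_{j_l}$ by averaging over further coordinates, all of which strictly precede $j_l$. The elementary ``data‑processing'' inequality $\ent_{\{i\}}(\mu_Bh)\le\mu_B[\ent_{\{i\}}(h)]$ whenever $i\notin B$ (immediate from the variational formula $\ent_{\mu_i}(h)=\sup\{\mu_i(hu):\mu_i(e^u)\le1\}$ and the commutation of $\mu_B$ with $\mu_{\{i\}}$) then gives $\mu[\ent_{\{j_l\}}(g_l)]\le\mu[\ent_{\{j_l\}}(\bar h_{j_l})]=\delta_{j_l}$, and summing over $l$ yields $\ent(\mu_Af)\le\sum_{i\in A^c}\delta_i$, as wanted.

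Granting this, the lemma follows by averaging against $\nu$: using $\delta_i\ge0$ and $\sum_{A\ni i}\nu(A)\ge\gamma(\nu)$,
\[
\sum_{A\subset[n]}\nu(A)\,\mu[\ent_A(f)]\ \ge\ \sum_{A\subset[n]}\nu(A)\sum_{i\in A}\delta_i\ =\ \sum_{i=1}^n\Big(\sum_{A\ni i}\nu(A)\Big)\delta_i\ \ge\ \gamma(\nu)\sum_{i=1}^n\delta_i\ =\ \gamma(\nu)\,\ent(f).
\]
The main obstacle is precisely the superadditivity estimate $\mu[\ent_A(f)]\ge\sum_{i\in A}\delta_i$: all the standard tensorization/subadditivity inequalities only \emph{upper}‑bound $\mu[\ent_A(f)]$ (they go the wrong way), so one genuinely needs the chain rule combined with monotonicity of relative entropy under marginalization to produce a matching \emph{lower} bound; the only delicate point is the bookkeeping of which coordinates get averaged out when one restricts to $A^c$, which is handled by the observation that they all come before $j_l$ in the chosen order.
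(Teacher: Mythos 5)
Your proof is correct. Note that the paper does not actually spell out an argument for this lemma: it reduces it, via the product structure of $\mu$, to the weighted version of Shearer's inequality for Shannon entropy, citing \cite{Virag} for the latter and \cite{entprod} for the reduction. What you do instead is give a self-contained proof carried out directly at the level of the relative-entropy functional: the telescoping chain rule $\ent(f)=\sum_i\delta_i$ along a fixed ordering of the coordinates, the identity \eqref{entA2} to convert the desired superadditivity $\mu[\ent_A(f)]\geq\sum_{i\in A}\delta_i$ into the upper bound $\ent(\mu_Af)\leq\sum_{i\in A^c}\delta_i$, and the monotonicity $\ent_{\{i\}}(\mu_B h)\leq\mu_B[\ent_{\{i\}}(h)]$ for $i\notin B$ (Jensen for the convex functional $\ent_{\mu_i}$, or the variational formula as you argue), applied with the correct bookkeeping that every coordinate averaged out of $\bar h_{j_l}$ to produce $g_l$ strictly precedes $j_l$, so that $g_l=\mu_B\bar h_{j_l}$ with $j_l\notin B$. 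This is in substance the classical chain-rule proof of Han/Shearer transported to densities with respect to a product measure, so the two routes rest on the same mechanism; the difference is that yours is self-contained and yields the ordering-based superadditivity statement explicitly (from which \eqref{shearer} follows by averaging against $\nu$, exactly as you do), whereas the paper trades this for brevity by citation. The final averaging step and the use of $\delta_i\geq 0$ together with $\sum_{A\ni i}\nu(A)\geq\g(\nu)$ are exactly right.
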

\begin{proof}
Reasoning as in \cite[Proposition 4.3]{entprod}, the estimate is a consequence of the product structure of $\mu$ and the weighted version of the classical Shearer inequality for Shannon entropy. We refer e.g.\  to \cite{Virag} for a proof of the latter. 
\end{proof} 
When $A=\{i\}$, a single site, we write $\mu_i$ for $\mu_{\{i\}}$ and $\ent_i$ for $\ent_{\{i\}}$. An immediate consequence of Lemma \ref{lem:shearer} is the following well known tensorization property:
  \begin{equation}
  \label{tensor}
\ent(f) \leq\sum_{i=1}^n \mu\left[\ent_{i}(f) \right].
\end{equation}
\subsection{The case $n=1$}
A key ingredient of our proof is the control of the base case $n=1$. Here the problem reduces to standard random transpositions and one can use a well known bound that was first derived in \cite{Goel,GQ}. In our setting it can be summarized as follows, see \cite[Theorem 1]{GQ} or \cite[Corollary 3.1]{Goel} for a proof. Note that for $n=1$ we have $\cS_{N,n} =S_N$.
 \begin{lemma}\la{GQlemma} for all $N\geq 2$, for all $g:S_N\mapsto\bbR_+$, for any $i=1,\dots,n$,
\begin{align*}
\sum_{\t\in S_N}
&g(\t)\log(g(\t)/\bar g) 
\leq \frac1N\sum_{j<\ell}
\sum_{\t\in S_N}
\left(g(\t^{j,\ell}) - g(\t)\right)\log \frac{g(\t^{j,\ell})}{g(\t)},
\end{align*}
where $\bar g = \frac1{N!}\sum_{\t\in S_N}g(\t)$, and $\t^{j,\ell}$ denotes $\t$ composed with the transposition at $\{j,\ell\}$.
%
%
\end{lemma}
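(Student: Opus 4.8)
The plan is to recognize this as the modified logarithmic Sobolev (entropy production) inequality for the random transposition shuffle, with a constant independent of $N$ — which is exactly the content of \cite{GQ,Goel} — and to prove it by induction on $N$. Writing $\mu$ for the uniform law on $S_N$, normalizing $\mu(g)=1$ by homogeneity, and setting $\Phi(a,b):=(a-b)(\log a-\log b)\geq 0$, the claim is equivalent to
\[
\Ent_\mu(g)\;\leq\;\frac1N\sum_{j<\ell}\mu\big[\Phi(g^{j,\ell},g)\big].
\]
The base case $N=2$ is the elementary two–point inequality $a\log a+b\log b-(a+b)\log\frac{a+b}2\leq(b-a)(\log b-\log a)$ for $a,b>0$.

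For the inductive step I would fix an index $i\in[N]$ and condition on $X:=\tau(i)$, the content of particle $i$ under a uniform random $\tau\in S_N$; note $X$ is uniform on $[N]$. The chain rule for entropy gives $\Ent_\mu(g)=\mu\big[\Ent(g\mid X)\big]+\Ent(h)$, where $h(x):=\mu[g\mid X=x]$ is viewed as a density on $[N]$ with the uniform law. Conditionally on $\{X=x\}$ the restriction of $\tau$ to $[N]\setminus\{i\}$ is a uniform bijection onto $[N]\setminus\{x\}$, i.e.\ a copy of a uniform permutation in $S_{N-1}$, and each transposition $\{j,\ell\}$ with $j,\ell\neq i$ acts on it as an $S_{N-1}$ transposition; applying the induction hypothesis to the conditional measure and averaging over $x$ gives
\[
\mu\big[\Ent(g\mid X)\big]\;\leq\;\frac1{N-1}\sum_{\substack{j<\ell\\ j,\ell\neq i}}\mu\big[\Phi(g^{j,\ell},g)\big].
\]

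For the term $\Ent(h)$ I would first use that the complete–graph Dirichlet form of the density $h$ equals $\cov_\mu(h,\log h)=\frac1{N^2}\sum_{\{x,y\}}\Phi(h(x),h(y))$, and that $\cov_\mu(h,\log h)\geq\Ent(h)$ because $\mu(\log h)\leq\log\mu(h)=0$ by Jensen; hence $\Ent(h)\leq\frac1{N^2}\sum_{\{x,y\}}\Phi(h(x),h(y))$. To feed in the transpositions of $g$, observe that the moves changing $X$ are exactly the transpositions $\{i,j\}$, $j\neq i$: coupling $\mu(\cdot\mid X=x)$ with $\mu(\cdot\mid X=y)$ through the fiber bijection $\tau\mapsto\tau^{i,\tau^{-1}(y)}$ and invoking the joint convexity of $(a,b)\mapsto\Phi(a,b)$ together with Jensen, one gets $\Phi(h(x),h(y))\leq\mu\big[\Phi(g^{i,\tau^{-1}(y)},g)\mid X=x\big]$; summing over ordered pairs $(x,y)$ and reindexing $\tau^{-1}(y)$ collapses the right side to $\frac N2\sum_{j\neq i}\mu[\Phi(g^{i,j},g)]$. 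This yields $\Ent(h)\leq\frac1{2N}\sum_{j\neq i}\mu[\Phi(g^{i,j},g)]$.

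Combining the two bounds, for every $i$ one has $\Ent_\mu(g)\leq\frac1{N-1}\sum_{\{j,\ell\}\not\ni i}\mu[\Phi(g^{j,\ell},g)]+\frac1{2N}\sum_{j\neq i}\mu[\Phi(g^{i,j},g)]$, and since the left side does not depend on $i$ I would average over $i=1,\dots,N$: a fixed pair $\{j,\ell\}$ is ``internal'' for $N-2$ values of $i$ and ``boundary'' for $2$ values, so the coefficient of $\mu[\Phi(g^{j,\ell},g)]$ becomes $\frac{N-2}{N(N-1)}+\frac1{N^2}=\frac{N^2-N-1}{N^2(N-1)}\leq\frac1N$, giving the claim. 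The \emph{main obstacle} is precisely this accounting: conditioning on one coordinate unavoidably produces the weaker prefactor $\frac1{N-1}$, and only the symmetrization over the distinguished particle — which requires the auxiliary complete–graph estimate to be sharp enough that its constant together with $\frac1{N-1}$ sums to $\frac1N$ — recovers the correct bound; getting the $h$-estimate with that sharp constant, via the joint-convexity/coupling comparison, is the one step where genuine care is needed, while the inductive scaffolding is routine.
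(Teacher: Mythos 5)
The paper does not actually prove this lemma --- it is imported from Gao--Quastel (\cite{GQ}, Theorem 1) and Goel (\cite{Goel}, Corollary 3.1) --- so there is no internal argument to compare against; what you give is a self-contained reconstruction, in the spirit of the Lee--Yau-type inductions used in those references, and I find it correct. The key steps all check out: the chain rule with $X=\tau(i)$; the observation that each fiber $\{\tau(i)=x\}$ carries a uniform copy of $S_{N-1}$ on which precisely the transpositions $\{j,\ell\}$ with $j,\ell\neq i$ act, so the induction hypothesis yields the $\tfrac1{N-1}\sum_{\{j,\ell\}\not\ni i}$ term after averaging over $x$; the bound $\Ent(h)\leq \cov(h,\log h)=\tfrac1{N^2}\sum_{\{x,y\}}\Phi(h(x),h(y))$, which is legitimate because under the normalization $\mu(g)=1$ the uniform law $u$ on $[N]$ satisfies $u(h)=1$ and hence $u(\log h)\leq \log u(h)=0$; the fiber map $\tau\mapsto\tau^{i,\tau^{-1}(y)}$ is indeed a bijection pushing $\mu(\cdot\,|\,X=x)$ onto $\mu(\cdot\,|\,X=y)$, and $\Phi(a,b)=a\log(a/b)+b\log(b/a)$ is jointly convex as a sum of two relative-entropy integrands, so Jensen plus the reindexing $y\mapsto j=\tau^{-1}(y)$ (which, for fixed $\tau$ in the fiber, runs over $j\neq i$) gives $\Ent(h)\leq\tfrac1{2N}\sum_{j\neq i}\mu[\Phi(g^{i,j},g)]$; finally the average over $i$ assigns each pair $\{j,\ell\}$ the coefficient $\tfrac{N-2}{N(N-1)}+\tfrac1{N^2}=\tfrac{N^2-N-1}{N^2(N-1)}\leq\tfrac1N$, and since $\Phi\geq0$ this may be relaxed to $\tfrac1N$, which is also what keeps the induction consistent (the hypothesis used at level $N-1$ is the weaker $\tfrac1{N-1}$ form). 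The base case $N=2$ is the elementary two-point inequality you state, equivalent to $\Ent\leq\tfrac12\Phi(a,b)$, which follows by homogeneity and a one-variable derivative comparison. Two small points worth making explicit in a written version: the induction hypothesis is applied to the unnormalized conditional function on each fiber, which is fine because the inequality is homogeneous of degree one in $g$; and the identity $\cov(h,\log h)=\tfrac12\sum_{x,y}u(x)u(y)(h(x)-h(y))(\log h(x)-\log h(y))$ should be quoted, as it is where the complete-graph form enters. Compared with simply citing the literature as the paper does, your route has the advantage of being elementary and of actually producing the slightly sharper constant $\tfrac{N^2-N-1}{N^2(N-1)}$.
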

The proof of Theorem \ref{th:ent} is based on Lemma \ref{lem:shearer}, Lemma \ref{GQlemma} and the following analysis of the entropy associated to partial random permutations of the particle configuration, which is the main novelty in this section.  

\subsection{Permutation entropies}
For any $A\subset [n]$, $g:\cS_{N,n}\mapsto\bbR$, define  $P_Ag:\cS_{N,n}\mapsto \bbR$, as
 \begin{align}\label{defpA}
P_Ag(\eta) = \frac1{N!}\sum_{\t\in S_N}g((\t\eta)_A\,\eta_{A^c}),
\end{align}
where $\t\eta:=\t\circ\eta$ denotes the element of $\cS_{N,n}$ obtained from $\eta$ by permuting the particle labels according to $\t$:
$$
(\t\eta)(j) = \eta(\t(j)).
$$ 
The linear operator $P_A$ is the orthogonal projection, in $L^2(\cS_{N,n},\mu)$, onto the space of functions that are symmetric w.r.t.\ permutations restricted to the subset $A$. When $A=\{i\}$ we write $P_{\{i\}}=P_i$, and note 
that $P_{i}g = \mu(g\tc \eta_{\{i\}^c})=\mu_i g$ for all $i$. Note also that $P_A,P_B$ do not commute for general $A,B\subset[n]$, but if $A\cap B=\emptyset$ then $P_AP_B=P_BP_A$. Moreover, one easily checks that  if $A\subset B\subset [n]$, then
 \begin{align}\label{commPA}
P_AP_B = P_{B\setminus A}P_A = P_AP_{B\setminus A}=P_BP_A,\qquad \;A\subset B.
\end{align}
Also, observe that the orthogonal projection $\mu_A$ defined above satisfies 
 \begin{align}\label{muAPA}
 \mu_A = \prod_{i\in A}P_i.
 \end{align}
Notice that $P_{[n]}g = g$ iff $g\in\bbS$, and that in this case
\begin{align}\label{enta+7}
P_Ag = P_{A^c}g = P_AP_{A^c}g=P_{A^c}P_{A}g\,,\qquad g\in\bbS.
\end{align}
For any fixed $f\geq 0$, and $A\subset[n]$, define 
\begin{align}\label{phia}
\phi(A;f) = \mu\left[f\log(f/P_Af)
\right].
\end{align}
The function $\phi$ is a suitable conditional entropy of $f$ on $A$. Namely, $\phi(A;f)$ is the expected value with respect to $\mu$ of the entropy 
$$
P_A\left[f\log(f/P_Af)\right].
$$
In particular, $\phi(A;f) \geq 0$.
 We call $\phi(A;f) $ the {\em permutation entropy} of $f$ on $A$.
When there is no risk of confusion we write simply $\phi(A)$ for $\phi(A;f)$. In general, one has 
\begin{lemma}\la{propphiA} 
Fix a function $f\geq 0$. For any $i\in[n]$, $\phi(\{i\}) =  \mu\left[\ent_i(f)\right]$,
and for all $A\subset[n]$:
\begin{equation}
\label{phia2}
\phi(A) = \mu\left[\ent_A(f) \right] - \mu\left[\ent_A(P_Af) \right] \,.
\end{equation}
Moreover,  for all $A\subset[n]$
\begin{equation}
\label{phiaGQ}
\phi(A) \leq  2\cE_A(f,\log f),
\end{equation} 
where 
\begin{align}\label{enta+5}
\cE_A(f,\log f)=\frac12\frac1N\sum_{j<\ell}\mu\left[\left(f^{j,\ell,A} - f\right)
\log \frac{f^{j,\ell,A}}{f} 
\right].
\end{align}
\end{lemma}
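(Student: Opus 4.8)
The plan is to prove the three assertions of Lemma~\ref{propphiA} in order, using only the definition \eqref{phia} of the permutation entropy, the projection identities \eqref{commPA}--\eqref{muAPA}, and the base-case estimate of Lemma~\ref{GQlemma}.

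First, the identity $\phi(\{i\};f)=\mu[\ent_i(f)]$ is immediate from \eqref{phia} and the observation (made just above) that $P_i=\mu_i$, so $P_i[f\log(f/P_if)]=\mu_i[f\log(f/\mu_if)]=\ent_i(f)$; taking $\mu$-expectation and using $\mu[\mu_i(\cdot)]=\mu(\cdot)$ gives the claim.

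Next, for \eqref{phia2} I would write $\ent_A(f)=\mu_A[f\log(f/\mu_Af)]=\mu_A[f\log f]-\mu_A(f)\log\mu_A(f)$ and similarly for $P_Af$ in place of $f$, and also expand $\phi(A)=\mu[f\log f]-\mu[f\log P_Af]$. Using $\mu_A(f)=\mu_A(P_Af)$ (which follows from $\mu_A=\prod_{i\in A}P_i$ and \eqref{commPA}, since each $P_i$ with $i\in A$ absorbs $P_A$), and $\mu[\mu_A(\cdot)]=\mu(\cdot)$, one gets
\begin{align}
\mu\left[\ent_A(f)\right]-\mu\left[\ent_A(P_Af)\right]
&= \mu[f\log f]-\mu[P_Af\log P_Af].
\end{align}
It remains to check $\mu[P_Af\log P_Af]=\mu[f\log P_Af]$, which holds because $P_A$ is a $\mu$-self-adjoint projection and $P_A(\log P_Af)=\log P_Af$, so $\mu[f\log P_Af]=\mu[f\,P_A(\log P_Af)]=\mu[(P_Af)\log P_Af]$. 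Combining gives $\phi(A)=\mu[\ent_A(f)]-\mu[\ent_A(P_Af)]$.

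Finally, for \eqref{phiaGQ} — which I expect to be the main point — I would apply Lemma~\ref{GQlemma} pointwise. Fix the variables $\eta_{A^c}$ and view $f$ as a function of $\eta_A\in(S_N)^{|A|}$. Writing $A=\{i_1,\dots,i_m\}$, one can tensorize: for a single coordinate $i\in A$, conditionally on all other permutation variables, Lemma~\ref{GQlemma} gives
\begin{align}
\mu_i\!\left[f\log(f/\mu_if)\right]
\leq \frac1N\sum_{j<\ell}\mu_i\!\left[(f^{j,\ell,\{i\}}-f)\log\frac{f^{j,\ell,\{i\}}}{f}\right].
\end{align}
The idea is then to decompose $f\log(f/P_Af)$ along a telescoping chain of partial symmetrizations $P_A=P_{i_1}\cdots P_{i_m}$ acting on $f$: at each step one replaces $P_{i_r}\cdots P_{i_m}f$-type quantities and invokes the single-site bound for the coordinate $i_r$, controlling the entropy drop $\ent_{i_r}$ of the current partially symmetrized function by the corresponding Dirichlet-type sum in the $i_r$ direction, as in the standard derivation of a modified log-Sobolev inequality from the $n=1$ case. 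Summing the $m$ telescoped contributions, each bounded by $\tfrac1N\sum_{j<\ell}\mu[(f^{j,\ell,\{i_r\}}-f)\log(f^{j,\ell,\{i_r\}}/f)]\le 2\cE_{\{i_r\}}$-type terms, and then recombining: the key algebraic point is that the transposition $\eta\mapsto\eta^{j,\ell,A}$ simultaneously transposes $j,\ell$ in every coordinate of $A$, so one must check that the sum of single-coordinate Dirichlet forms is dominated by the joint form $\cE_A(f,\log f)$ of \eqref{enta+5} with the factor $2$; this comparison — essentially convexity of $(x,y)\mapsto(x-y)\log(x/y)$ together with the fact that composing the $A$-block transposition from a chain of single-site transpositions of the same pair $\{j,\ell\}$ only increases the Dirichlet energy — is the crux of the argument. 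The hard part will be carrying out this telescoping/comparison cleanly so that the constant is exactly $2$ and no extra factor depending on $|A|$ appears; I would handle it by first proving the two-coordinate case $|A|=2$ carefully and then iterating.
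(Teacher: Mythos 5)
Your treatment of the identity $\phi(\{i\})=\mu[\ent_i(f)]$ and of \eqref{phia2} is correct and essentially the paper's own argument (decompose the logarithm and use that $P_A$ is a $\mu$-self-adjoint projection with $\mu_AP_A=\mu_A$). The genuine gap is in the proof of \eqref{phiaGQ}, and it begins with the identity your telescoping is built on: $P_A\neq P_{i_1}\cdots P_{i_m}$ when $|A|\geq 2$. By \eqref{muAPA} that product equals $\mu_A$, the full conditional expectation given $\eta_{A^c}$, whereas $P_A$ in \eqref{defpA} averages over a \emph{single} permutation $\tau$ applied simultaneously (diagonally) to all coordinates $i\in A$; for instance $\ind(\eta_{i_1}=\eta_{i_2})$ is $P_A$-invariant but not $\mu_A$-invariant, and if the two projections coincided, \eqref{phia2} would force $\mu[\ent_A(P_Af)]=0$ for every $f$, which is false. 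Worse, the comparison you yourself flag as the crux --- dominating the sum of single-coordinate Dirichlet forms by the joint form $\cE_A(f,\log f)$ --- fails in general: take $A\supset\{i_1,i_2\}$ and $f=1+\ind(\eta_{i_1}=\eta_{i_2})$. The block move replaces $\eta_i$ by $\eta_i\circ(j\,\ell)$ for every $i\in A$, so $\eta_{i_1}\circ\eta_{i_2}^{-1}$ is unchanged, hence $f^{j,\ell,A}=f$ and $\cE_A(f,\log f)=0$, while $\cE_{\{i_1\}}(f,\log f)>0$. So the telescoping/comparison route cannot yield \eqref{phiaGQ}, with or without the constant $2$.

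The step you are missing is that no tensorization is needed at all: the diagonal $S_N$-action on the $A$-block is itself a single copy of $S_N$. Fix $\eta$ and set $g(\tau):=f((\tau\eta)_A\,\eta_{A^c})$; then $\bar g=P_Af(\eta)$, and composing $\tau$ with the transposition at $\{j,\ell\}$ turns $g(\tau)$ into $f$ evaluated at the configuration obtained from $(\tau\eta)_A\,\eta_{A^c}$ by the block move at $\{j,\ell\}$, i.e.\ the transpositions of Lemma \ref{GQlemma} correspond exactly to the moves $\eta\mapsto\eta^{j,\ell,A}$ appearing in $\cE_A$. Applying Lemma \ref{GQlemma} to $g$ gives, pointwise in $\eta$,
\begin{equation}
P_A\left[f\log(f/P_Af)\right]\;\leq\;\frac1N\sum_{j<\ell}P_A\left[\left(f^{j,\ell,A}-f\right)\log\frac{f^{j,\ell,A}}{f}\right],
\end{equation}
and taking the $\mu$-expectation together with $\mu P_A=\mu$ yields \eqref{phiaGQ}, the factor $2$ coming from the $\tfrac12\tfrac1N$ normalization in \eqref{enta+5}. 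This is the paper's proof; it requires no iteration over the coordinates of $A$ and no comparison between different Dirichlet forms.
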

\begin{proof}
If $A=\{i\}$, then $P_A f = \mu_i f$ 
and therefore $\phi(\{i\}) =  \mu\left[\ent_i(f)\right]$. In general, for any $A$, 
\begin{align*}
\mu\left[\ent_A(f) \right]&= \mu\left[f\log(f/\mu_Af)\right] \nonumber\\& 
=\mu\left[f\log(f/P_Af)\right] + \mu\left[f\log(P_Af/\mu_Af)\right].
\end{align*}
The second term above satisfies 
$$
 \mu\left[f\log(P_Af/\mu_Af)\right]=  \mu\left[P_A f\log(P_Af/\mu_A P_Af)\right] = \mu\left[\ent_A(P_Af) \right] \,,
$$
where we have used  that $\mu_BP_A=\mu_B$ for any $A\subset B\subset[n]$ since $P_A$ is an orthogonal projection. This proves \eqref{phia2}.

To prove \eqref{phiaGQ}, observe that for any $A\subset [n]$, $f\geq 0$, any fixed $\eta\in\cS_{N,n}$, taking $g(\t):=f((\t\circ\eta)_A\eta_{A^c})$, one has $\bar g = P_Af$, and therefore by Lemma \ref{GQlemma}, 
\begin{align*}
&P_A\left[f\log(f/P_Af)\right] (\eta)=\frac1{N!}\sum_{\t\in S_N}
g(\t)\log(g(\t)/\bar g)\\&
\leq \frac1N\sum_{j<\ell}\frac1{N!}\sum_{\t\in S_N}\left(f((\t^{j,\ell}\eta)_A\eta_{A^c}) - f((\t\eta)_A\eta_{A^c})\right)\log \frac{f((\t^{j,\ell}\eta)_A\eta_{A^c})}{f((\t\eta)_A\eta_{A^c})}\\&=
\frac1N\sum_{j<\ell}P_A\left[\left(f^{j,\ell,A} - f\right)\log \frac{f^{j,\ell,A}}f
\right](\eta).
\end{align*}
Taking the expectation and using $\mu P_A=\mu$ one obtains \eqref{phiaGQ}.
\end{proof}
Next, we compare the permutation entropy $\phi(A)$ with the usual entropy $\mu\left[\ent_A(f) \right]$. 
The previous lemma in particular shows that $\phi(A)\leq \mu\left[\ent_A(f) \right]$.
The next lemma allows us to give a bound in the opposite direction. Notice that such a bound needs some care since if e.g.\ $f\in\bbS$ is not a constant then $\phi([n];f)=0$ while $\mu\left[\ent_{[n]}(f) \right]=\ent(f)\neq 0$. 
  \begin{lemma}\la{keyphi} 
Fix  $A\subset [n]$ such that $0\leq |A|\leq n-1$, and $V\subset A^c$.
Then, 
\begin{equation}
\label{phia5}
\phi(A) + \sum_{i\in V}\phi(A\cup\{i\})\geq  \mu\left[\ent_{V}f \right].
\end{equation}
\end{lemma}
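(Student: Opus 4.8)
The plan is to deduce \eqref{phia5} from a short chain of three inequalities, the last of which is in essence strong subadditivity of entropy. I would first record two identities valid for every $g\ge 0$ and $B\subseteq[n]$: the first is $\mu[\ent_B(g)]=\ent(g)-\ent(\mu_Bg)$, i.e.\ \eqref{entA2}; the second is $\phi(B;g)=\ent(g)-\ent(P_Bg)$, which holds because $\log P_Bg$ lies in the range of the orthogonal projection $P_B$ and $\mu(P_Bg)=\mu(g)$, so $\mu[g\log P_Bg]=\mu[P_Bg\log P_Bg]$. After normalizing $\mu(f)=1$ (both sides of \eqref{phia5} are homogeneous of degree one), these identities turn \eqref{phia5} into a statement about $\ent(P_Af)$, $\ent(P_{A\cup\{i\}}f)$ and $\ent(\mu_Vf)$.

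The first step is $\mu[\ent_Vf]-\phi(A;f)\le\mu[\ent_V(P_Af)]$. By the two identities this is exactly $\ent(P_Af)-\ent(\mu_Vf)\le\ent(P_Af)-\ent(\mu_V(P_Af))$, i.e.\ $\ent(\mu_Vf)\ge\ent(\mu_V(P_Af))$; since $V\subseteq A^c$ we have $\mu_V(P_Af)=P_A(\mu_Vf)$, and applying the averaging operator $P_A$ never increases $\ent$, so this holds. The second step is the tensorization of entropy over the columns in $V$ (the block-$V$ analogue of \eqref{tensor}, proved exactly as that inequality, or via Lemma \ref{lem:shearer} applied to the $S_N^{|V|}$ factor) used on the nonnegative function $P_Af$, together with $\phi(\{i\};g)=\mu[\ent_i g]$ from Lemma \ref{propphiA}:
\[
\mu[\ent_V(P_Af)]\le\sum_{i\in V}\mu[\ent_i(P_Af)]=\sum_{i\in V}\phi(\{i\};P_Af).
\]

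The crux is the third step: $\phi(\{i\};P_Af)\le\phi(A\cup\{i\};f)$ for each $i\in V$. Here the point is that $\phi(\{i\};P_Af)=\mu[P_Af\log(P_Af/\mu_iP_Af)]$ is the relative entropy of $(P_Af)\mu$ with respect to $(\mu_iP_Af)\mu$, and applying the averaging $g\mapsto P_Ag$ to the pair of densities $\bigl(f,\,P_{A\cup\{i\}}f\bigr)$ cannot increase relative entropy — this is the log–sum/data–processing inequality applied to the partition of $\cS_{N,n}$ into the orbits of the $S_N$-action on the columns in $A$ (all of equal size $N!$). Since $i\notin A$, the commutation relation \eqref{commPA} gives $P_AP_{A\cup\{i\}}=P_{\{i\}}P_A=\mu_iP_A$, hence $P_A(P_{A\cup\{i\}}f)=\mu_i(P_Af)$, and the data–processing inequality reads precisely $\phi(\{i\};P_Af)\le\phi(A\cup\{i\};f)$. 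Chaining the three displays yields \eqref{phia5}. I expect the main obstacle to be bookkeeping rather than any genuine difficulty: one must keep the projections $P_A$, $P_{A\cup\{i\}}$, $\mu_i$, $\mu_V$ and the commutation relations \eqref{commPA} straight, and phrase the third step so that it is visibly an instance of strong subadditivity; the first two steps are routine manipulations with the conditional-expectation structure.
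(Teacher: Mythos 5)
Your proof is correct: the identity $\phi(B;g)=\ent(g)-\ent(P_Bg)$ is valid (since $\log P_Bg$ is $P_B$-invariant and $P_B$ is self-adjoint with $\mu(P_Bg)=\mu(g)$), Step 1 reduces exactly to $\ent(P_A\mu_Vf)\leq \ent(\mu_Vf)$ via the commutation of $P_A$ with $\mu_V$ for $V\subset A^c$, the conditional (block-$V$) tensorization in Step 2 is the standard product-measure argument behind \eqref{tensor}, and Step 3 is a legitimate use of the log--sum/data-processing inequality over the orbits of the $A$-restricted $S_N$-action, combined with \eqref{commPA} in the form $P_AP_{A\cup\{i\}}=\mu_iP_A$. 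The route differs from the paper's in how the quantity $\mu[\ent_V(P_Af)]$ is handled: the paper enumerates $V=\{x_1,\dots,x_k\}$, forms the nested chain $P^\ell=P_A\mu_{x_1}\cdots\mu_{x_\ell}$, writes an exact telescoping identity $\mu[f\log(f/P^kf)]=\phi(A;f)+\sum_\ell\mu[f\log(P^\ell f/P^{\ell+1}f)]=\mu[\ent_Vf]+\phi(A;\mu_Vf)$, and then bounds each increment $\phi(A_{\ell+1};P^\ell f)\leq\phi(A_{\ell+1};f)$ by the variational principle for relative entropy (using that $P_{A_{\ell+1}}$ commutes with $P^\ell$); you instead bound $\mu[\ent_V(P_Af)]$ by subadditivity over the single sites of $V$ and then compare each single-site term to $\phi(A\cup\{i\};f)$ by data processing. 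Both hinge on the same monotonicity of relative entropy under the projections, but your version avoids ordering $V$ and the bookkeeping of the chain $P^\ell$, at the cost of replacing the paper's exact chain-rule identity (which carries the extra nonnegative remainder $\phi(A;\mu_Vf)$, discarded anyway) with the tensorization inequality; the only point to spell out in a final write-up is that the tensorization is applied conditionally on $\eta_{V^c}$, i.e.\ $\mu[\ent_V(g)]\leq\sum_{i\in V}\mu[\ent_i(g)]$, which follows from the product structure of $\mu$ exactly as \eqref{tensor} does.
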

\begin{proof}
Write $V=\{x_1,\dots,x_k\}\subset [n]$, $k=|V|$, and define $A_0=A$, and
$A_i=A\cup\{x_i\}$, $i=1,\dots,k$. In general $P_{A_i}$ and $P_{A_j}$ do not commute for $i,j=1,\dots,k$, but using \eqref{commPA} one has $P_{A_0}P_{A_i} = P_{A_0}P_{x_i}= P_A\mu_{x_i}$ and for any $\ell=1,\dots,k$, 
$$
P^\ell:=P_{A_0}P_{A_1}\cdots P_{A_\ell} = P_A\prod_{i=1}^\ell \mu_{x_i}= \left(\prod_{i=1}^\ell \mu_{x_i} \right)P_A 
=\prod_{i=0}^\ell P_{A_i}\,,
$$
where the last identity holds regardless of the order of the multiplications. 
In other words,  the operators $P_{A_i}$ commute thanks to the presence of $P_{A_0}$. In particular, $P^k=P_A\mu_V=\mu_VP_A$.

Consider the entropy
\begin{align}\label{eq:summa}
\mu\left[f\log\frac{f}{P^kf}\right] = \mu\left[f\log\frac{f}{\mu_Vf}\right] + \mu\left[f\log\frac{\mu_Vf}{P_A\mu_Vf}\right].
\end{align}
The first term is $\mu\left[\ent_Vf\right]$. The second term satisfies
\begin{align}\label{eq:summaa}
 \mu\left[f\log\frac{\mu_Vf}{P_A\mu_Vf}\right] =  \mu\left[\mu_Vf\log\frac{\mu_Vf}{\mu_VP_Af}\right]
 = \phi(A;\mu_Vf)\geq 0. 
\end{align}
On the other hand, 
$$
\log\frac{f}{P^kf}=\log\frac{f}{P_{A_0} f} + \log\frac{P_{A_0}f}{P_{A_0}P_{A_1}f}+\cdots+\log\frac{P_{A_0}P_{A_1}\cdots P_{A_{k-1}} f}{P_{A_0}P_{A_1}\cdots P_{A_k} f},
$$
and therefore 
\begin{align}\label{eq:summ}
\mu\left[f\log\frac{f}{P^kf}\right] = \phi(A;f)+\sum_{\ell=0}^{k-1}\mu\left[f\log\frac{P^\ell
f}{P^{\ell+1}f}\right].
\end{align}
Next, we show that 
\begin{equation}
\label{phia05}
\mu\left[f\log\frac{P^\ell
f}{P^{\ell+1}f}\right]
\leq \phi\left(A_{\ell+1};f\right),
\end{equation}
for all $\ell=0,\dots,k-1$. Combined with \eqref{eq:summa}-\eqref{eq:summ}, this implies 
the desired conclusion:
\begin{equation}
\label{phia5x}
\phi(A) + \sum_{i\in V}\phi(A\cup\{i\})\geq \mu\left[\ent_{V}f \right] + \phi(A;\mu_V f)\geq \mu\left[\ent_{V}f \right].
\end{equation}
It remains to prove \eqref{phia05}.
We first observe that 
\begin{equation}
\label{phia5xx}
\mu\left[f\log\frac{P^\ell
f}{P^{\ell+1}f}\right]=\mu\left[P^\ell
f\log\frac{P^\ell f}{P^{\ell+1}f}\right]=\phi\left(A_{\ell+1};P^\ell f\right).
\end{equation}
The well known variational principle for the relative entropy implies  that for any $B\subset [n]$
$$
P_B\left(f\log\frac{f}{P_Bf}\right)\geq P_B(fg)\,,
$$
for any function $g$ such that $P_B(e^g)=1$. 
Choosing $g= \log\frac{P^\ell f}{P_BP^\ell f}$ shows that 
$$
P_B\left(f\log\frac{f}{P_Bf}\right)\geq P_B\left(f\log\frac{P^\ell f}{P_BP^\ell f}\right)\,.
$$
Taking the expectation one finds 
$$
\phi(B;f)\geq\mu\left[f\log\frac{P^\ell f}{P_BP^\ell f}
\right]\,.
$$
If $P_B,P^\ell $ commute, then 
$$
\mu\left[f\log\frac{P^\ell f}{P_BP^\ell f}
\right]= \mu\left[f\log\frac{P^\ell f}{P^\ell P_Bf}
\right]=\mu\left[P^\ell f\log\frac{P^\ell f}{P^\ell P_Bf}
\right] = \phi(B;P^\ell f).
$$
Therefore,
$$
\phi(B;f)\geq\phi(B;P^\ell f),
$$
whenever $P_B,P^\ell $ commute. 
Taking $B=A_{\ell+1}$, and using the  fact that $P_{A_{\ell+1}}$ and $P^{\ell}$ commute, one obtains
$\phi\left(A_{\ell+1};P^\ell f\right)\leq \phi\left(A_{\ell+1}; f\right)$. Together with \eqref{phia5xx}, this implies \eqref{phia05}. 
\end{proof}
%
\subsection{Proof of Theorem \ref{th:ent}}
From the strict separation assumption, it follows that for some $\d(\nu)>0$, 
$$
\cE_{N,n}(f,\log f)
\geq \frac{\d(\nu)}n\sum_{i=1}^n\left(\cE_{A_i}(f,\log f)+\cE_{A_i\setminus\{i\}}(f,\log f)\right),
$$
where, for every $i$,  $A_i\subset[n]$ is such that $A_i\ni i$ and $\min\{\nu(A_i),\nu(A_i\setminus\{i\})\}\geq \d(\nu)$.  
Therefore, from Lemma \ref{propphiA},
 \begin{equation}
\label{phia5a1}
\cE_{N,n}(f,\log f)
\geq \frac{\d(\nu)}{2n}\sum_{i=1}^n\left(\phi(A_i) +\phi(A_i\setminus\{i\})\right).
\end{equation}
Lemma \ref{keyphi}  then implies 
\begin{equation}
\label{phia5a2}
\cE_{N,n}(f,\log f)
\geq \frac{\d(\nu)}{2n}\sum_{i=1}^n\mu\left[\ent_i f\right].
\end{equation}
Using \eqref{tensor} it follows that $\a(N,n)\geq \d(\nu)/{2n}$. This proves the bound \eqref{entao} with $\a(\nu)=\d(\nu)/{2n}$. 

To prove the lower bound for one-point crossover, notice that the above argument can be repeated but this time we can directly estimate
$$
\cE_{N,n}(f,\log f)\geq \frac14\frac1{n+1}\sum_{i=1}^n (\phi(J_i)+\phi(J_{i-1}))\geq  
\frac1{4(n+1)}\sum_{i=1}^n\mu\left[\ent_{i}f \right],
$$
where $J_0=\emptyset$, and $J_i=\{1,\dots,i\}$, $i\geq 1$.
The lower bound $\a(N,n)\geq 1/4(n+1)$ thus follows again by the tensorization \eqref{tensor}.
%

Next, we prove the lower bound for the case of uniform crossover $\nu(A)=2^{-n}$
for all $A\subset[n]$. From Lemma \ref{propphiA} 
$$
\cE_{N,n}(f,\log f)=2^{-n}\sum_A \cE_A(f,\log f)\geq 2^{-n-1}\sum_A \phi(A). 
$$
By Lemma \ref{keyphi},
\begin{align*}
\sum_{i=1}^n\sum_{A}\phi(A)\ind(i\in A)&=  \sum_{i=1}^n\sum_{A: \,|A|\leq n-1}\phi(A\cup\{i\})\ind(i\notin A)
\\&
\geq  \sum_{A: \,|A|\leq n-1} \left(\mu\left[\ent_{A^c}f \right] - \phi(A)\right). 
\end{align*}
Therefore,
\begin{align}\label{enta+43}
 \sum_A \phi(A)&= \frac1n\sum_{i=1}^n\sum_{A}\phi(A)(\ind(i\notin A) +\ind(i\in A))
 \\&\geq \frac1n\sum_{A}|A^c|\phi(A) + \frac1n\sum_{A: \,|A|\leq n-1}\left(\mu\left[\ent_{A^c}f \right] - \phi(A)\right) \\&=\frac1n\sum_{A:\, |A|\leq n-1}(|A^c|-1)\phi(A) + \frac1n\sum_{A}\mu\left[\ent_{A^c}f \right] .
\end{align}
In particular,
$$
2^{-n-1}\sum_A \phi(A)\geq \frac{2^{-n}}{2n}\sum_A\mu\left[\ent_{A^c}f \right]. 
$$
From Lemma \ref{lem:shearer} it follows that 
$$
2^{-n-1}\sum_A \phi(A)\geq \frac{1}{4n}\ent f. 
$$
This proves the desired bound $\a(N,n)\geq \frac{1}{4n}$. 

Finally, in the case of symmetric functions one has  
$\phi(A)=\phi(A^c)$ for all $A\subset[n]$, see \eqref{enta+7}. Therefore, the previous computation now shows that
$$
\sum_A \phi(A)\geq \frac2n\sum_{A}\mu\left[\ent_{A^c}f \right] - \frac2n \sum_A \phi(A).
$$
Rearranging terms yields the bound $\a_\bbS(N,n)\geq \frac{1}{2(n+2)}$. This concludes the proof of Theorem \ref{th:ent}.

\subsection{Proof of Proposition \ref{prop:upbo}}
The proof of the upper bound on $\a(N,n)$  is based on Proposition \ref{dff2tyhm}, Proposition \ref{th:entchaos}, and the following entropy production estimate  for the nonlinear equation that was derived in \cite{entprod}.  
\begin{lemma}\label{lem:entprod}
Let $\O=\{0,1\}^n$, and let $p=p(n)\in \cP(\O)$ be defined as 
$$
p = w^2\d_{\underline 1} + (1-w)^2\d_{\underline 0} + 2w(1-w) \,\cU,
$$
where $w=2^{-n}$ and $\cU$ is the uniform distribution on $\O$, and $\d_{\underline 1}$ and $\d_{\underline 0}$ denote the Dirac mass at the ``all one" and ``all zero" configuration respectively.
Then,  taking $f=p/\pi$, with $\pi=\otimes_{i=1}^np_i$, one has 
\begin{equation}\label{edff2}
 \frac{D_\pi(f)}{\ent_\pi f}\leq \frac{4}{n} + O\left(\frac1{n^{2}}\right). 
\end{equation}
\end{lemma}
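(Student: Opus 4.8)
The plan is to evaluate $\ent_\pi f=H(p\tc\pi)$ and the entropy production $D_\pi(f)$ asymptotically as $n\to\infty$, with $w=2^{-n}$ as the small parameter, and to show that they equal $\log 2$ times explicit polynomials in $n$ with leading behaviour $wn^2$ and $4wn$ respectively. First I would record the elementary facts about the measures: since $\cU(\sigma)=w$ one reads off $p(\underline 1)=w^2(3-2w)$, $p(\underline 0)=(1-w)(1-w+2w^2)$ and $p(\sigma)=2w^2(1-w)$ for $0<|\sigma|<n$, and a one-line computation of the single-site marginals gives $\pi_i=\mathrm{Bern}(w)$, hence $\pi(\sigma)=w^{|\sigma|}(1-w)^{n-|\sigma|}$. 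More importantly, for $A\subset[n]$ with $|A|=m$ the marginal $p_A$ has exactly the same three-component shape as $p$ itself, $p_A=w^2\delta_{\underline 1_A}+(1-w)^2\delta_{\underline 0_A}+2w(1-w)\cU_A$, with the \emph{same} weight $w=2^{-n}$; in particular $p_A=\pi_A$ whenever $|A|\le 1$.

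For $\ent_\pi f$ I would use that $p$ and $\pi$ have the same marginals, so $H(p\tc\pi)=n\,h(w)-H(p)$ with $h(w)=-w\log w-(1-w)\log(1-w)$ the binary entropy and $H(p)=-\sum_\sigma p(\sigma)\log p(\sigma)$. Splitting the sum over the three types of configuration and expanding in $w$ — the key arithmetic being that there are $2^n-2$ intermediate configurations and $2^n w^2=w$, so these contribute at orders $nw$ and $n^2w$, while the $\underline 1$ configuration, although it carries a logarithm of size $\sim n^2\log 2$, contributes only $O(n^2 w^2)$ and is negligible — one obtains $\ent_\pi f=w\big[(\log 2)n^2-(4\log 2-1)n+O(1)\big]$.

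For $D_\pi(f)$ I would start from \eqref{eq:fish4} and use the identity $(a-b)\log(a/b)=a\log(a/b)+b\log(b/a)$ together with the fact that $f^Af^{A^c}$ is the density of $p_A\otimes p_{A^c}$ with respect to $\pi$ to rewrite $D_\pi(f)=\sum_A\nu(A)\big[H(p\tc p_A\otimes p_{A^c})+H(p_A\otimes p_{A^c}\tc p)\big]$. Since $\pi=\pi_A\otimes\pi_{A^c}$ is a product, the Pythagorean identity for relative entropy gives $H(p\tc p_A\otimes p_{A^c})=H(p\tc\pi)-H(p_A\tc\pi_A)-H(p_{A^c}\tc\pi_{A^c})$, and averaging over $A$ with $\nu(A)=2^{-n}$, using the symmetry $A\leftrightarrow A^c$, yields $D_\pi(f)=\big(H(p\tc\pi)-2\bar H_1\big)+\bar H_2$, where $\bar H_1=2^{-n}\sum_A H(p_A\tc\pi_A)$ and $\bar H_2=2^{-n}\sum_A H(p_A\otimes p_{A^c}\tc p)$. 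For $\bar H_1$ I would use that $H(p_A\tc\pi_A)$ depends only on $m=|A|$ and can be expanded exactly as in the previous step (on $m$ coordinates, still with weight $w$); summing against the binomial coefficients, using $\sum_m\binom nm m=n2^{n-1}$, one finds that the $n^2w$ terms of $H(p\tc\pi)$ and $2\bar H_1$ cancel, leaving $H(p\tc\pi)-2\bar H_1=2w\big[(n-1)\log 2+1\big]+o(wn)$. For $\bar H_2$ I would compute $H(p_A\otimes p_{A^c}\tc p)$ directly, splitting $\sum_\sigma$ according to the pattern of $(\sigma_A,\sigma_{A^c})$ in $\{\underline 0,\underline 1,\mathrm{mixed}\}^2$: the dominant contributions come from the $(\underline 0,\mathrm{mixed})$, $(\mathrm{mixed},\underline 0)$ and $(\underline 0,\underline 0)$ patterns — the $2^{n-m}$, resp.\ $2^m$, mixed configurations exactly compensating the factor $w$ — while all patterns involving $\underline 1$ are $O(nw^2)$; this gives $H(p_A\otimes p_{A^c}\tc p)=2w(n\log 2-1)+O(nw^2)$ uniformly in $m$, hence $\bar H_2=2w(n\log 2-1)+o(wn)$.

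Combining the pieces, $D_\pi(f)=\big(H(p\tc\pi)-2\bar H_1\big)+\bar H_2=2w(2n-1)\log 2+o(wn)=4(\log 2)wn\big(1+O(1/n)\big)$, while $\ent_\pi f=(\log 2)wn^2\big(1+O(1/n)\big)$, so $D_\pi(f)/\ent_\pi f=\tfrac4n\big(1+O(1/n)\big)=\tfrac4n+O(1/n^2)$. The only genuine difficulty is the asymptotic bookkeeping in the computation of $\bar H_1$ and $\bar H_2$: one has to check that, after multiplication by the binomial coefficients, the super-exponentially small contributions of the $\underline 1$-type configurations remain negligible at the relevant orders, and one has to keep the $O(nw)$ term in both $H(p\tc\pi)$ and $\bar H_1$ carefully, since their difference — where the leading $n^2w$ parts cancel — together with $\bar H_2$ determines the numerator of the ratio.
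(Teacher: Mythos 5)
The paper itself gives no proof of this lemma\,---\,it simply cites \cite{entprod} (Proposition 4.7)\,---\,so a self-contained computation like yours is a legitimate route, and its skeleton is sound. The elementary facts about $p$, $\pi$ and the marginals $p_A$ are correct, the identity $\ent_\pi f=n\,h(w)-H(p)$, the rewriting $D_\pi(f)=\sum_A\nu(A)\bigl[H(p\tc p_A\otimes p_{A^c})+H(p_A\otimes p_{A^c}\tc p)\bigr]$ obtained from $(a-b)\log(a/b)=a\log(a/b)+b\log(b/a)$, and the Pythagorean decomposition $H(p\tc p_A\otimes p_{A^c})=H(p\tc\pi)-H(p_A\tc\pi_A)-H(p_{A^c}\tc\pi_{A^c})$ are all valid, and the leading asymptotics you extract, $\ent_\pi f=wn^2\log 2\,(1+O(1/n))$ and $D_\pi(f)=4wn\log 2\,(1+O(1/n))$ for the uniform crossover, are the right ones.

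There are, however, two genuine defects. First, you average over $A$ with $\nu(A)=2^{-n}$, so your argument proves the bound only for the uniform crossover; but $D_\pi(f)$ in \eqref{eq:fish4} is defined with the ambient recombination measure, and the lemma is invoked in the proof of Proposition \ref{prop:upbo} for an \emph{arbitrary} $\nu$, so uniformity of $\nu$ (and the binomial concentration it provides) cannot be used. Second, the claim that $H(p_A\otimes p_{A^c}\tc p)=2w(n\log 2-1)+O(nw^2)$ \emph{uniformly} in $m=|A|$ is false: $p_A(\underline 1_A)=w^2+2w(1-w)2^{-m}$ is of order $w2^{-m}$, not $w^2$, so the pattern $(\underline 1_A,\underline 0_{A^c})$ contributes roughly $2w2^{-m}(n-m)\log 2$; for $m=1$ one finds $H(p_A\otimes p_{A^c}\tc p)=w[(n+1)\log 2-1]+o(w)$, not $2w(n\log2-1)$. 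Under the uniform average this is harmless because $\E[2^{-|A|}]=(3/4)^n$, but that must be argued; as written the step is wrong. Both problems are cured at once by running your expansions for a \emph{fixed} $A$ and adding the two relative entropies \emph{before} estimating: the excess in $H(p_A\otimes p_{A^c}\tc p)$ for small $|A|$ is more than offset by a corresponding decrease in $H(p\tc p_A\otimes p_{A^c})$, and one gets $H(p\tc p_A\otimes p_{A^c})+H(p_A\otimes p_{A^c}\tc p)=2(2n-1)w\log 2\,\bigl(1-2^{-|A|}-2^{-|A^c|}\bigr)+o(w)$ uniformly over $0<|A|<n$ (the terms $A=\emptyset,[n]$ vanish), whence $D_\pi(f)\leq 2(2n-1)w\log 2+o(w)$ for \emph{every} $\nu$. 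Finally, to reach the stated $O(1/n^2)$ rather than merely $o(1/n)$ you need the numerator to absolute precision $O(w)$; your expansions do deliver this, but the error terms you record, $o(wn)$, do not, so the bookkeeping in the last step should be tightened accordingly.
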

\begin{proof}
See \cite[Proposition 4.7]{entprod}. 
\end{proof}
To prove Proposition \ref{prop:upbo} we take $\O=\{0,1\}^n$, and $p$ as in Lemma \ref{lem:entprod}. Note that $p$ has marginals equal to Bernoulli ${\rm Be}(w)$. Take $\r_N$ such that \eqref{hypo1} holds with $\pi$ the product of Bernoulli ${\rm Be}(w)$, 
and write $f_N = \g(p,\r_N)/\g(\pi,\r_N)$. Recall that
 \begin{equation}\label{erdff2}
 \a(N,n)\leq \a(\O_{\r_N})\leq \frac{\cE(f_N,\log f_N)}{\ent f_N},
 \end{equation}
 where $\ent f_N = H_N(\g(p,\r_N)\tc \g(\pi,\r_N))$. 
Clearly, $p$ is irreducible. From Proposition \ref{dff2tyhm} and Proposition \ref{th:entchaos}, 
\begin{equation}\label{errdff2}
 \limsup_{N\to\infty}\,\a(N,n)\leq  \frac{D_\pi(f)}{\ent_\pi f}\leq
 \frac{4}{n} + O\left(\frac1{n^{2}}\right).
 \end{equation}

\subsection{Proof of Theorem \ref{entprodthm}}
Let $p\in\cP(\O)$ be an arbitrary initial value for the Boltzmann equation and let $\pi=\otimes_i p_i$ denote the corresponding equilibrium.  In order to ensure that $p$ be irreducible we write
$$
p^{(\e)} = \e\,\pi + (1-\e)\,p\,
$$
with $\e\in(0,1)$ to be taken to zero eventually. Clearly, $p,p^{(\e)},\pi$ have the same marginals. Let $\r_N$ be an admissible sequence such that \eqref{hypo1} holds. Let $p^{(\e)}_{N} = \g(p^{(\e)},\r_N)$ and $\pi_N = \g(\pi,\r_N)$ and define $p^{(\e)}_{N,t}=p^{(\e)}_{N}e^{t\cL_N}$. The propagation of chaos at fixed time $t$ implies that the hypothesis of Proposition \ref{th:entUPSC} apply to $\mu^{(N)}=p^{(\e)}_{N,t}$. Therefore, 
\begin{align}
H(p^{(\e)}_t\tc\pi) \leq \liminf_{N \to \infty}\frac{H_N(p^{(\e)}_{N,t}\tc\pi_N)}{N},
\end{align}
where $p^{(\e)}_t$ is the solution to the nonlinear equation with initial datum $p^{(\e)}$.
Theorem \ref{th:ent} implies that 
\begin{align}
H_N(p^{(\e)}_{N,t}\tc\pi_N)\leq e^{-\a(\nu)\,t}H_N(p^{(\e)}_{N}\tc\pi_N).
\end{align}
Then an application of Proposition \ref{th:entchaos} shows that for all $\e>0$, $t\geq 0$, one has
\begin{align}
H(p^{(\e)}_t\tc\pi) \leq 
e^{-\a(\nu)\,t}H(p^{(\e)}\tc\pi).
\end{align}
The conclusion follows by taking $\e\to 0^+$. Indeed, $p^{(\e)}\to p$ and, by \eqref{nonlincont1} we know that $p^{(\e)}_t\to p_t$, so that   $H(p^{(\e)}\tc\pi)\to H(p\tc\pi)$, and $H(p^{(\e)}_t\tc\pi)\to H(p_t\tc\pi)$. 
We note that we can use the entropy production for symmetric functions here, so that for instance in the case of uniform crossover we may take $\a(\nu)\geq 1/2(n+2)$.

\appendix


\section{Proof of Proposition \ref{prop:LCLT}}
In order to prove Proposition \ref{prop:LCLT} we adapt to our multivariate setting some classical estimates, see e.g.\ \cite{Chung,Petrov}. For the sake of clarity we give a self-contained proof. Recall the notation from Section \ref{sec:clt_chaos}. In particular, $|t|=\sqrt{\scalar{t}{t}}$ denotes the vector norm of $t\in\bbR^K$, and the random variable $\xi=(\xi_{i,x})$ with distribution $\mu\in\cP(X)$ takes values in $X=\{0,1\}^K$, and has covariance matrix $V_1$. The next lemma only requires the nondegeneracy of $\mu$, that is $\det(V_1)\neq 0$.  The proof of Proposition \ref{prop:LCLT} however requires the irreducibility of $\mu$ in order to apply Lemma \ref{charfunc}. 
\begin{lemma}
\label{BerEs}
Let $\mu\in\cP(X)$ be nondegenerate. For any $t\in\bbR^K$, define $$\phi_N(t):=\mu\left(e^{i\scalar{V_N^{-1/2}t\,}{\,\bar{\xi}}}\right)^N,$$ where $\bar{\xi}_{i,x}:=\xi_{i,x}-\mu\left[\xi_{i,x}\right],$ $V_N=N V_1$, 
and  $L_N:=\frac1{\sqrt N}\,\mu\left[\big|V_1^{-1/2}\bar{\xi}\big|^3\right].$ Then
\begin{align}\label{beres1}
\left|\phi_N(t)-e^{-\frac{1}{2}\scalar{t}{t}}\right|\leq16L_N|t|^3e^{-\frac{1}3\scalar{t}{t}} \,,\qquad |t|\leq \frac{1}{4L_N}.
\end{align}
\end{lemma}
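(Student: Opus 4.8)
The plan is to prove the local CLT estimate \eqref{beres1} by the classical route: compare the characteristic function $\phi_N(t)$ to the Gaussian $e^{-\frac12\scalar{t}{t}}$ via a third-order Taylor expansion of the single-variable log-characteristic function, and then control the remainder uniformly on the range $|t|\leq 1/(4L_N)$. Write $\psi(s) := \mu(e^{i\scalar{s}{\bar\xi}})$ for the centered characteristic function of $\mu$. Since $\bar\xi$ is centered with covariance $V_1$, a Taylor expansion gives $\psi(s) = 1 - \tfrac12\scalar{s}{V_1 s} + \theta(s)$ with $|\theta(s)| \leq \tfrac16\,\mu(|\scalar{s}{\bar\xi}|^3) \leq \tfrac16\,|V_1^{1/2}s|\cdot\mu(\scalar{s}{\bar\xi}^2)^{1/2}\cdot(\ldots)$ — more cleanly, $|\theta(s)|\leq \tfrac16\,\mu(|\scalar{s}{\bar\xi}|^3)$, and after the substitution $s = V_N^{-1/2}t = \tfrac1{\sqrt N}V_1^{-1/2}t$ this becomes $|\theta(V_N^{-1/2}t)|\leq \tfrac16\,\tfrac1{N^{3/2}}\mu(|\scalar{V_1^{-1/2}t}{\bar\xi}|^3)\leq \tfrac16\,\tfrac1{N^{3/2}}|t|^3\,\mu(|V_1^{-1/2}\bar\xi|^3) = \tfrac16\,\tfrac{L_N}{N}|t|^3$, using Cauchy–Schwarz in the form $|\scalar{V_1^{-1/2}t}{\bar\xi}| = |\scalar{t}{V_1^{-1/2}\bar\xi}|\leq |t|\,|V_1^{-1/2}\bar\xi|$.

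Next I would write $\phi_N(t) = \psi(V_N^{-1/2}t)^N = \big(1 - \tfrac{\scalar{t}{t}}{2N} + \theta_N(t)\big)^N$ where $\theta_N(t) := \theta(V_N^{-1/2}t)$ satisfies $|\theta_N(t)|\leq \tfrac16\,\tfrac{L_N}{N}|t|^3$, and compare to $e^{-\frac12\scalar{t}{t}} = \big(e^{-\scalar{t}{t}/(2N)}\big)^N$. The standard inequality $|a^N - b^N|\leq N\,|a-b|\,\max(|a|,|b|)^{N-1}$ reduces matters to (i) bounding $|a - b|$ where $a = 1 - \tfrac{\scalar{t}{t}}{2N}+\theta_N(t)$ and $b = e^{-\scalar{t}{t}/(2N)}$, and (ii) bounding $\max(|a|,|b|)^{N-1}$. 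For (i), $|b - (1-\tfrac{\scalar{t}{t}}{2N})|\leq \tfrac12(\tfrac{\scalar{t}{t}}{2N})^2$, so $|a-b|\leq \tfrac16\tfrac{L_N}{N}|t|^3 + \tfrac{|t|^4}{8N^2}$; on the range $|t|\leq 1/(4L_N)$ one has $|t| \leq \tfrac1{4L_N}$ hence $\tfrac{|t|^4}{8N^2}\leq \tfrac{|t|^3}{32 L_N N^2}\leq \tfrac{L_N|t|^3}{N}$ (crudely, using $L_N\geq$ some positive constant, or just $L_N N\geq 1$ appropriately — I will keep track of the exact numeric constant so that the final bound comes out below $16$), giving $|a-b|\leq C_1\tfrac{L_N}{N}|t|^3$ for an explicit small $C_1$. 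For (ii), I need $|a|\leq e^{-\scalar{t}{t}/(3N)}$ and $|b|\leq e^{-\scalar{t}{t}/(3N)}$ on the given range, so that $\max(|a|,|b|)^{N-1}\leq e^{-(N-1)\scalar{t}{t}/(3N)}\leq e^{-\scalar{t}{t}/3}$ after absorbing the missing factor; for $b$ this is clear, and for $|a|$ one uses that $|\theta_N(t)|\leq \tfrac16\tfrac{L_N}{N}|t|^3\leq \tfrac1{24}\tfrac{\scalar{t}{t}}{N}$ on $|t|\leq 1/(4L_N)$, so $|a|\leq 1 - \tfrac{\scalar{t}{t}}{2N} + \tfrac1{24}\tfrac{\scalar{t}{t}}{N}\leq 1 - \tfrac{\scalar{t}{t}}{3N}\leq e^{-\scalar{t}{t}/(3N)}$. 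Multiplying the two bounds: $|\phi_N(t) - e^{-\frac12\scalar{t}{t}}|\leq N\cdot C_1\tfrac{L_N}{N}|t|^3\cdot e^{-\scalar{t}{t}/3} = C_1 L_N|t|^3 e^{-\scalar{t}{t}/3}$, and tracking constants carefully yields $C_1\leq 16$.

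The main obstacle is purely bookkeeping of the numerical constants: one must choose the Taylor remainder estimates and the exponent $\tfrac13$ (rather than, say, $\tfrac14$) so that on the whole range $|t|\leq 1/(4L_N)$ both the ``error is $\lesssim L_N|t|^3$'' bound and the ``$|a|,|b|\leq e^{-\scalar{t}{t}/(3N)}$'' bound hold simultaneously with a final constant at most $16$. A secondary point worth stating explicitly is that $L_N$ is finite and indeed $L_N = O(1/\sqrt N)$ since $\xi$ takes values in the bounded set $\{0,1\}^K$ and $V_1$ is invertible by nondegeneracy; in particular $1/(4L_N)\to\infty$, so the range of validity is nontrivial for large $N$. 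After \eqref{beres1} is established, the passage to Proposition \ref{prop:LCLT} is the standard Fourier-inversion argument: write $\mu^{\otimes N}(S_N = M_N)$ as an integral of $\phi_N$ over $[-\pi,\pi]^K$ rescaled, split into $|t|\leq 1/(4L_N)$ and the complement, use \eqref{beres1} on the first piece and Lemma \ref{charfunc} (which needs irreducibility) to get exponential smallness on the second, and recognize the Gaussian integral — but that is the content of the appendix proper and not of the lemma at hand.
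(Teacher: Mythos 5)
Your argument is correct, but it takes a genuinely different route from the paper's. The paper proves \eqref{beres1} by splitting the range into two regimes: for $\frac12 L_N^{-1/3}\leq |t|\leq \frac1{4L_N}$ it does not compare $\phi_N$ and the Gaussian at all, but simply bounds $|\phi_N(t)-e^{-\frac12\scalar tt}|\leq |\phi_N(t)|+e^{-\frac12\scalar tt}\leq 2e^{-\frac13\scalar tt}$ (using a second-moment/third-moment estimate to get $|\phi_N(t)|^2\leq e^{-\frac23\scalar tt}$) and then uses that $16L_N|t|^3\geq 2$ there — this regime is where the constant $16$ is actually consumed; for $|t|<\frac12 L_N^{-1/3}$ it expands $\log\phi(V_N^{-1/2}t)$, shows $|\log\phi_N(t)+\frac12\scalar tt|\leq \frac12 L_N|t|^3$, and concludes via $|e^z-1|\leq|z|e^{|z|}$. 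You instead work on the whole range $|t|\leq\frac1{4L_N}$ at once, avoiding logarithms entirely through the telescoping bound $|a^N-b^N|\leq N|a-b|\max(|a|,|b|)^{N-1}$ with $a=\psi(V_N^{-1/2}t)$, $b=e^{-\scalar tt/(2N)}$; this yields the claim with a constant far smaller than $16$, which of course implies the stated bound. The one place where your write-up should be made precise is the moment fact you invoke in parentheses: ``$L_N\geq$ some positive constant'' is false uniformly in $N$ (indeed $L_N=O(1/\sqrt N)$), and ``$L_NN\geq 1$'' alone does not give $\frac{|t|^4}{8N^2}\lesssim \frac{L_N|t|^3}{N}$. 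What you need is $NL_N^2=\mu\big[|V_1^{-1/2}\bar\xi|^3\big]^2\geq K^3\geq 1$, which follows from Jensen's inequality since $\mu\big[|V_1^{-1/2}\bar\xi|^2\big]=K$; this is exactly the inequality the paper itself uses in its second regime. The same fact gives $|t|\leq \frac1{4L_N}\leq\frac{\sqrt N}{4}$, which you need (and use implicitly) to ensure $1-\frac{\scalar tt}{2N}\geq 0$ in the bound $|a|\leq e^{-\scalar tt/(3N)}$ and to absorb the factor $e^{\scalar tt/(3N)}\leq e^{1/48}$ coming from the exponent $N-1$ versus $N$. With that stated, your constants close comfortably below $16$ and the proof is complete.
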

\begin{proof}
We split the proof into two. First, let us suppose that $ \frac{1}{4L_N}\geq|t|\geq\frac{1}{2}L_N^{-\frac{1}{3}}.$ If one has
\begin{align}\label{beres2}
|\phi_N(t)|^2\leq e^{-\frac{2}{3}\scalar{t}{t}}
\end{align}
then 
\begin{align}
\left|\phi_N(t)-e^{-\frac12\scalar{t}{t}}\right|\leq|\phi_N(t)|+e^{-\frac12\scalar{t}{t}}\leq 2e^{-\frac13\scalar{t}{t}}\leq16L_N|t|^3e^{-\frac13\scalar{t}{t}}.
\end{align}
We now show \eqref{beres2}.
Let $\phi(t) := \mu\left[e^{i\scalar t{\bar{\xi}}}\right].$ Define $\Psi:=\Psi_1 - \Psi_2,$ where $\Psi_1$ and $\Psi_2$ are independent and indentically distributed as $\bar{\xi}$, so that the characteristic function of $\Psi$ is $|\phi(t)|^2,$ and its covariance matrix is $2V_1.$ Next we use 
 \begin{align}\label{exprest}
R(x)=e^{ix}-\left(1+ix-\frac{x^2}{2}\right),\qquad |R(x)|\leq \min\left\{|x|^2,\frac{|x|^3}{6}\right\},
\end{align}
for all $x \in \bbR.$ By substituting $x=\scalar{t}{\Psi}$ in \eqref{exprest} and taking the expectation, 
\begin{align}
|\phi(t)|^2 &= 1-\frac{1}{2}\mu\left[\left(\scalar{t}{\Psi}\right)^2\right] + \mu\left[R\left(\scalar{t}{\Psi}\right)\right] \label{charfunc1} \\ 
&\leq 1-\scalar{t}{V_1t} + \frac{1}{6}\mu\left[\left|\scalar{t}{\Psi}\right|^3\right].
\end{align}
Then, by using the fact that $(1+x)^N\leq e^{Nx}$ for all $x,$ that $V_N^{-1/2}$ is self-adjoint, and the Cauchy-Schwarz inequality, 
\begin{align}
|\phi_N(t)|^2 &=\left|\phi\left(V_N^{-1/2}t\right)\right|^{2N}  
\leq\left(1-\frac1N \scalar{t}{t} + \frac16 \mu\left[\left|\scalar{t}{V_N^{-1/2}\Psi}\right|^3\right]\right)^N\\
&\leq \left(1-\frac1N \scalar{t}{t}  + \frac{8}{6}|t|^3\mu\left[\left|V_N^{-1/2}\bar{\xi}\right|^3\right]\right)^N\\
&\leq \exp\left\{-\scalar{t}{t}+\frac{8}{6}L_N|t|^3\right\} \leq e^{-\frac{2}{3}\scalar{t}{t}},
\end{align}
thus \eqref{beres1} is proved if $ \frac{1}{4L_N}\geq|t|\geq\frac{1}{2}L_N^{-\frac{1}{3}}.$

Next, suppose that $|t|< \frac{1}{2}L_N^{-\frac{1}{3}}.$ Define $\tau_N := \frac{L_N}{N}.$ One has
\begin{align}\label{ineq}
\frac{1}{2}>L_N^{\frac{1}{3}}|t|>\tau_N^{\frac{1}{3}}|t|=\mu\left[\left|V_1^{-1/2}\bar{\xi}\right|^3\right]^{\frac{1}{3}}\frac{|t|}{\sqrt{N}}\geq  \mu\left[\left|V_1^{-1/2}\bar{\xi}\right|^2\right]^{\frac{1}{2}}\frac{|t|}{\sqrt{N}}\geq \frac{|t|}{\sqrt{N}},
\end{align}
where the last inequality follows from 
\begin{align}
\mu\left[\left|V_1^{-1/2}\bar{\xi}\right|^2\right]^{\frac{1}{2}} &= \left(\mu\left[\scalar{V_1^{-1/2}\bar{\xi}}{V_1^{-1/2}\bar{\xi}}\right]\right)^{\frac{1}{2}} = \sqrt{K}\geq 1.
\end{align}
From \eqref{exprest} and the inequalities \eqref{ineq}, using again the Cauchy-Schwarz inequality we can write
\begin{align}\nonumber
 \left|\phi(V_N^{-1/2}t)-1\right| &= \left|-\frac1{2N}\scalar{t}{t} + \mu\left[R\left(\scalar{t}{V_N^{-1/2}\bar{\xi}}\right)\right]\right| \\ \label{beres3}
& \leq \frac{1}{2N}\scalar{t}{t} + \tau_N\frac{|t|^3}{6}  
< \frac{1}{8}+\frac{1}{48}<\frac{1}{4}. 
\end{align}
 For all $|z|<1$ the following inequality holds:
\begin{align}\label{log}
\left|\log(z+1)-z\right| \leq \frac{|z|^2}{2(1-|z|)}
\end{align}
Then, by using, in order, \eqref{log}, \eqref{beres3},  $|a+b|^2\leq2(|a|^2+|b|^2)$ and 
 \eqref{ineq},
 \begin{align}
& \left|\log\phi(V_N^{-1/2}t)-\left(\phi(V_N^{-1/2}t)-1\right)\right| \leq \frac{\left|\phi(V_N^{-1/2}t)-1\right|^2}{2\left(1-\left|\phi(V_N^{-1/2}t)-1\right|\right)} <\frac{2}{3}\left|\phi(V_N^{-1/2}t)-1\right|^2 \\
 &\qquad =\frac{2}{3}\left|-\frac{1}{2N}\scalar{t}{t} +\mu\left[R\left(\scalar{t}{V_N^{-1/2}\bar{\xi}}\right)\right] \right|^2 \leq \frac{4}{3}\left(\frac{|t|^4}{4N^2}+ \frac{\tau_{N}^2}{36}|t|^6\right) \\
 &\qquad \leq \frac{4}{3}\left(\frac{\tau_N}{\sqrt{N}}\frac{|t|^4}{4} + \frac{\tau_{N}^2}{36}|t|^6\right) \leq \frac{4}{3}\left(\frac{1}{2\cdot4}+\frac{1}{36\cdot8}\right)\tau_N|t|^3 
 =\frac{37}{216}\tau_N|t|^3 < \frac{1}{5}\tau_N|t|^3.
 \end{align}
  By the triangular inequality we have
  \begin{align}
 &\left|\log\phi(V_N^{-1/2}t)+\frac{1}{2N}\scalar{t}{t}\right| \leq \left|\log\phi(V_N^{-1/2}t) - \left(\phi(V_N^{-1/2}t)-1\right)\right| +
 \\ & \qquad \quad +\left|\left(\phi(V_N^{-1/2}t)-1\right)+\frac{1}{2N}\scalar{t}{t}\right| \leq \frac{1}{5}\tau_N|t|^3 + \frac{1}{6}\tau_N|t|^3 \leq \frac{1}{2}\tau_N|t|^3.
 \end{align}
Using the inequality $|e^z-1|\leq |z|e^{|z|}$, $z \in \bbC$, and \eqref{ineq},
 \begin{align}
 \left|\phi_N(t)e^{\frac{|t|^2}{2}}-1\right| 
 &\leq\left|\log \phi_N(t)e^{\frac{|t|^2}{2}}\right|\exp\left(\left|\log \phi_N(t)e^{\frac{|t|^2}{2}}\right|\right) \\
 &=N\left|\log \phi(V_N^{-1/2}t)+\frac{1}{2N}\scalar{t}{t}\right|\exp\left(
 N\left|\log \phi(V_N^{-1/2}t)+\frac{1}{2N}\scalar{t}{t}\right|
 \right) \\
 &\leq \frac{1}{2}L_N|t|^3e^{\frac{1}{2}L_N|t|^3} 
 \leq \frac{1}{2}L_N|t|^3e^{\frac{1}{16}}  <|t|^3L_N,
 \end{align}
 so that \eqref{beres1} is proved.
 \end{proof}

\bigskip

\begin{proof}[{\bf Proof of Proposition \ref{prop:LCLT}}]
Set $z_N:=\frac1{\sqrt N} \;V_1^{-1/2}\left(M_N - \mu^{\otimes N}(S_N)\right)$. Using the Fourier transform as in the proof of Theorem \ref{th:eqens} and the identity 
\begin{align}
\int_{\bbR^K}e^{-i\scalar{s}{z_N}-\frac{\scalar{s}{s}}{2}}ds = (2\pi)^{\frac{K}{2}}e^{-\frac{\scalar{z_N}{z_N}}{2}},
\end{align}
 one has
\begin{align}
& \left |\mu^{\otimes N}\left(S_N = M_N\right)-\frac{e^{-\frac{1}{2}\scalar{z_N}{z_N}}}{(2\pi N)^{\frac{K}{2}}\sqrt{\det V_1}}\right|  \\
& = \frac{1}{B_N(2\pi)^K}\left | \int_{Q_{N,K}}e^{-i\scalar{s}{z_N}}\phi_N(s)ds-\int_{\bbR^K}e^{-i\scalar{s}{z_N}-\frac{1}{2}\scalar{s}{s}}ds\right| \label{summation} \\
&   \leq \frac{1}{B_N(2\pi)^K}\left(\int_{A_N}\left|\phi_N(s)-e^{-\frac{1}{2}\scalar{s}{s}}\right| ds + \int_{Q_{N,K} \setminus A_N}\left|\phi_N(s)\right|ds + \int_{A_N^c}e^{-\frac{1}{2}\scalar{s}{s}}ds\right),
\end{align}
where $B_N:=\sqrt{\det V_N}=N^{K/2}\sqrt {\det V_1}$,
$Q_{N,K}:=V_N^{1/2}[-\pi,\pi]^K$, and $A_{N} := \{s\in\bbR^K:\, |s|\leq \frac{1}{4L_N}\}.$  
We are going to show that the three terms in the parenthesis above are bounded by $C/\sqrt N$. 

Let us define $$\tau:=\mu\left[\left|V_1^{-1/2}\bar{\xi}\right|^3\right].$$ For the first term in \eqref{summation} we use Lemma \ref{BerEs}:
\begin{align}
\int_{A_{N}}\left|\phi_N(s)-e^{-\frac{<s,s>}{2}}\right| ds &\leq 16L_N \int_{A_{N}}|s|^3 e^{-\frac{<s,s>}{3}}\\ 
&\leq16L_N \int_{\bbR^K}|s|^3 e^{-\frac{<s,s>}{3}} =  \frac{C_1}{\sqrt{N}},
\end{align}
where $C_1 :=16\,\tau\int_{\bbR^K}|s|^3 e^{-\frac{<s,s>}{3}}.$
For the second term in \eqref{summation} we use Lemma \ref{charfunc}:
\begin{align}\label{eq:t2eq}
\int_{Q_{N,K} \setminus A_{N}}\left|\phi_N(s)\right|ds &= B_N\int_{[-\pi,\pi)^K \setminus \cB_{\tau}}|\phi(t)|^N dt 
 \leq B_N(2\pi)^Ke^{-\frac{CN}{16\tau^2}},
\end{align}
where $C$ is defined as in that lemma and $\cB_{\tau}:= \left\{t\in\bbR^K:\,|t|\leq\frac{1}{4\tau}\right\}$. Therefore, \eqref{eq:t2eq} is bounded by $C_2/\sqrt N$ for a suitable constant $C_2$. 
Finally, to bound the last term in \eqref{summation}, using $L_N^{-1}\geq c\sqrt N$ for some constant $c>0$, a simple estimate on the gaussian integral  shows that 
\begin{align}
\int_{|s|>\frac{1}{4L_N}}e^{-\frac{<s,s>}{2}}ds & \leq  \frac{C_3}{\sqrt N},
\end{align}
for some constant $C_3$. 
\end{proof}


\begin{thebibliography}{99}


\bibitem{Baakeetal}
Baake, E.,  Baake, M. and Salamat, M. ``The general recombination equation in continuous time and its solution'' Discrete and Continuous Dynamical Systems 36 (2015): 63-95.

\bibitem{barthe_et_al}
Barthe, F.,  Cordero-Erausquin, D., Ledoux, M.,  and Maurey, Bernard.
International Mathematics Research Notices 10 (2011): {2177-2216}.



\bibitem{Bertini_etal} Basile, G., Benedetto, D., Bertini, L. and Caglioti, E. "Large deviations for a binary collision model: energy evaporation." 
Mathematics in Engineering 5.1 (2022).

\bibitem{BT} Bobkov, S. and Tetali, P. ``Modified logarithmic Sobolev inequalities in discrete settings.'' Journal of Theoretical Probability 19.2 (2006): 289-336.


\bibitem{bonetto2017entropy} Bonetto, F., Geisinger, A., Loss, M.\ and Ried, T. ``Entropy decay for the Kac evolution.'' Communications in Mathematical Physics 363.3 (2018): 847-875.


\bibitem{asymm} Caputo, P., and Martinelli, F. ``Asymmetric diffusion and the energy gap above the 111 ground state of the quantum XXZ model.'' Communications in mathematical physics 226.2 (2002): 323-375.


 \bibitem{entprod} Caputo, P., Sinclair, A. ``Entropy production in nonlinear recombination models.'' Bernoulli J. 24, (2018): 3246–3282 


\bibitem{entfound} Carlen, E. A., Carvalho, M. C., Le Roux, J., Loss, M., and Villani, C. ``Entropy and chaos in the Kac model.'' Kinet. Relat. Models 3, 1 (2010), 85–122.


\bibitem{carlenwild} Carlen, E. A., Carvalho, M.C. and Gabetta, E. ``Central limit theorem for Maxwellian molecules and truncation of the Wild expansion.'' Communications on Pure and Applied Mathematics: A Journal Issued by the Courant Institute of Mathematical Sciences 53.3 (2000): 370-397.


\bibitem{cattiaux} Cattiaux, P., Guillin, A. and Malrieu, F. ``Probabilistic approach for granular media equations in the non-uniformly convex case.'' Probability theory and related fields 140.1 (2008): 19-40.


\bibitem{revchaos} Chaintron, L-P., and Diez, A. ``Propagation of chaos: a review of models, methods and applications. I. Models and methods.'' 
 Kinetic \& Related Models 15.6 (2022).

\bibitem{Virag} Cs{\'o}ka, E., Harangi, V.,Vir{\'a}g, B. ``Entropy and expansion.'' Annales de l'Institut Henri Poincar{\'e}, Probabilit{\'e}s et Statistiques 56, (2020): 2428–2444.

\bibitem{DelMoralGuionnet}
 Del Moral, P. and Guionnet, A.,
``On the stability of interacting processes with applications to filtering and genetic algorithms".
Annales de l'Institut Henri Poincar{\'e} (B)
37 (2001): 
155-194.


\bibitem{DS} Diaconis, P., and Saloff-Coste, L. ``Logarithmic Sobolev inequalities for finite Markov chains.'' The Annals of Applied Probability 6.3 (1996): 695-750.


\bibitem{DiacShash} Diaconis, P., and Shahshahani, M. ``Generating a random permutation with random transpositions.'' Zeitschrift f{\"u}r Wahrscheinlichkeitstheorie und verwandte Gebiete 57.2 (1981): 159-179.


\bibitem{dolera_etal} Dolera, E., Gabetta, E. and Regazzini, E. ``Reaching the best possible rate of convergence to equilibrium for solutions of Kac's equation via central limit theorem.'' The Annals of Applied Probability 19.1 (2009): 186-209.


\bibitem{elementary} Durmus, A., Eberle, A., Guillin, A. and Zimmer, R. ``An elementary approach to uniform in time propagation of chaos.'' Proceedings of the American Mathematical Society 148.12 (2020): 5387-5398.


 \bibitem{einav2011villani} Einav, A. ``On Villani's conjecture concerning entropy production for the Kac master equation.'' Kinetic and Related Models 4(2011), 479–497.

\bibitem{erbar} Erbar, M., Fathi, M. and Schlichting, A. "Entropic curvature and convergence to equilibrium for mean-field dynamics on discrete spaces." ALEA 17 (2020): 445-471.

\bibitem{GQ} Gao, F. and Quastel, J. ``Exponential decay of entropy in the random transposition and Bernoulli-Laplace models.'' The Annals of Applied Probability 13.4 (2003): 1591-1600.


\bibitem{Gei} Geiringer, H. ``On the probability theory of linkage in Mendelian heredity.'' The Annals of Mathematical Statistics 15.1 (1944): 25-57.


\bibitem{Goel} Goel, S. ``Modified logarithmic Sobolev inequalities for some models of random walk.'' Stochastic processes and their applications 114.1 (2004): 51-79.


\bibitem{grunbaumchaos} Gr{\"u}nbaum, F. A.\ ``Propagation of chaos for the Boltzmann equation.'' Archive for Rational Mechanics and Analysis 42.5 (1971): 323-345.


\bibitem{onkacschaos} Hauray, M. and Mischler, S. ``On Kac's chaos and related problems.'' Journal of Functional Analysis 266.10 (2014): 6055-6157.

\bibitem{KAC} Kac, M. ``Foundations of kinetic theory.'' In Proceedings of the Third Berkeley Symposium on Mathematical Statistics and Probability, vol. III, University of California Press, pp.\ 171-197, 1956.


\bibitem{Chung} Kai Lai Chung. ``A course in probability theory.'' Academic press, 2001.


\bibitem{lacker} Lacker, D. ``Hierarchies, entropy, and quantitative propagation of chaos for mean field diffusions.'' Probability and Mathematical Physics 4.2 (2023): 377-432.

\bibitem{Lyubich} Lyubich, Y. I., Akin, E., Karpov, A. E., and Vulis, D. ``Mathematical structures in population genetics.'' Vol. 22. Berlin: Springer-Verlag, 1992


\bibitem{Martinez} Mart{\'i}nez, S. ``A probabilistic analysis of a discrete-time evolution in recombination.'' Advances in Applied Mathematics 91 (2017): 115-136.


 \bibitem{mckean} McKean, H. P. ``An exponential formula for solving Boltzmann's equation for a Maxwellian gas.'' Journal of Combinatorial Theory 2.3 (1967): 358-382.
 
 
\bibitem{mckean1966speed} McKean, H.P. ``Speed of approach to equilibrium for Kac's caricature of a Maxwellian gas.'' Archive for rational mechanics and analysis 21.5 (1966): 343-367.

\bibitem{miclo}
Miclo,  L., Del Moral,  P. ``Genealogies and increasing propagation of chaos for Feynman-Kac and genetic models." The Annals of Applied Probability 11 (2001): 1166-1198.


\bibitem{kacsprog} Mischler, S. and Mouhot, C. ``Kac's program in kinetic theory.'' Inventiones mathematicae 193.1 (2013): 1-147.


\bibitem{anewapp} Mischler, S., Mouhot, C. and Wennberg, B. ``A new approach to quantitative propagation of chaos for drift, diffusion and jump processes.'' Probability Theory and Related Fields 161.1 (2015): 1-59.


\bibitem{Nagylaki} Nagylaki, T. ``Introduction to theoretical population genetics.'' Vol. 21. Springer Science \& Business Media, 2013.


 \bibitem{Petrov} Petrov, Valentin V. ``Sums of independent random variables.'' Sums of Independent Random Variables. De Gruyter, 2022.
 
\bibitem{review_discrete} Platkowski, T. and Illner, R. "Discrete velocity models of the Boltzmann equation: a survey on the mathematical aspects of the theory." SIAM review 30.2 (1988): 213-255.
 
\bibitem{Sinetal2} Rabani, Y., Rabinovich, Y. and Sinclair, A. ``A computational view of population genetics.'' Random Structures \& Algorithms 12.4 (1998): 313-334.


\bibitem{Sinetal} Rabinovich, Y., Sinclair, A. and Wigderson, A. ``Quadratic dynamical systems.'' Proceedings., 33rd Annual Symposium on Foundations of Computer Science, 1992, pp. 304-313, doi: 10.1109/SFCS.1992.267761.




\bibitem{Reza} Rezakhanlou, F. "Propagation of chaos for particle systems associated with discrete Boltzmann equation." Stochastic processes and their applications 64.1 (1996): 55-72.

\bibitem{sznitman} Sznitman, A-S. ``Topics in propagation of chaos.'' Ecole d'{\'e}t{\'e} de probabilit{\'e}s de Saint-Flour XIX—1989. Springer, Berlin, Heidelberg, 1991. 165-251.


\bibitem{tanaka}Tanaka, H. ``Probabilistic treatment of the Boltzmann equation of Maxwellian molecules.'' Zeitschrift f{\"u}r Wahrscheinlichkeitstheorie und Verwandte Gebiete 46.1 (1978): 67-105.


\bibitem{Vil} Villani, C. ``Cercignani's conjecture is sometimes true and always almost true.'' Communications in mathematical physics 234.3 (2003): 455-490.
 
 
\bibitem{Wild} Wild, E. ``On Boltzmann's equation in the kinetic theory of gases.'' Mathematical Proceedings of the Cambridge Philosophical society. Vol. 47. No. 3. Cambridge University Press, 1951.

\end{thebibliography}
\end{document}